\providecommand{\tabularnewline}{\\}
\numberwithin{equation}{section} 
\numberwithin{figure}{section} 
\theoremstyle{plain}
\newtheorem{thm}{Theorem}[section]
  \theoremstyle{definition}
  \newtheorem{defn}[thm]{Definition}
  \theoremstyle{plain}
  \newtheorem{algorithm}[thm]{Algorithm}
  \theoremstyle{plain}
  \newtheorem{prop}[thm]{Proposition}
 \theoremstyle{definition}
  \newtheorem{example}[thm]{Example}
  \theoremstyle{plain}
  \newtheorem{assumption}[thm]{Assumption}
  \theoremstyle{plain}
  \newtheorem{lem}[thm]{Lemma}
  \theoremstyle{plain}
  \theoremstyle{remark}
  \newtheorem{rem}[thm]{Remark}
  \theoremstyle{definition}
  \newtheorem{problem}[thm]{Problem}
  \theoremstyle{remark}
  \newtheorem*{acknowledgement*}{Acknowledgement}
\newcommand{\lev}{\mbox{\rm lev}}
\newcommand{\intr}{\mbox{\rm int}}
\newcommand{\Range}{\mbox{\rm Range}}
\newcommand{\diam}{\mbox{\rm diam}}
\newcommand{\unitt}{\mbox{\rm unit}}
\begin{document}

\title{Level set methods for finding saddle points of general Morse index}

\author{C.H. Jeffrey Pang}

\curraddr{Department of Combinatorics and Optimization, Mathematics, University
of Waterloo.}

\email{chj2pang@math.uwaterloo.ca}

\date{\today}
\begin{abstract}
For a function $f:X\to\mathbb{R}$, a point is critical if its derivatives
are zero, and a critical point is a saddle point if it is not a local extrema. In this
paper, we study algorithms to find saddle points of general Morse
index. Our approach is motivated by the multidimensional mountain
pass theorem, and extends our earlier work on methods (based on studying
the level sets of $f$) to find saddle points of mountain pass type.
We prove the convergence of our algorithms in the nonsmooth case,
and the local superlinear convergence of another algorithm in the
smooth finite dimensional case.
\end{abstract}

\subjclass[2000]{35B38, 58E05, 58E30, 65N12}

\keywords{multidimensional mountain pass, nonsmooth critical points, superlinear
convergence, metric critical point theory.}

\maketitle
\tableofcontents{}

\section{Introduction}

For a function $f:X\rightarrow\mathbb{R}$, we say that $x$ is a
\emph{critical point} if $\nabla f(x)=\mathbf{0}$, and $y$ is a
\emph{critical value} if there is some critical point $x$ such that
$f(x)=y$. A critical point $x$ is a \emph{saddle point} if it is
neither a local minimizer nor a local maximizer. In this paper, we
present algorithms based on the multidimensional mountain pass theorem
to find saddle points numerically.

The main purpose of critical point theory is the study of variational
problems. These are problems (P) such that there exists a smooth functional
$\Phi:X\rightarrow\mathbb{R}$ whose critical points are solutions
of (P). Variational problems occur frequently in the study of partial
differential equations. 

At this point, we make a remark about saddle points in the study of
min-max problems. Such saddle points occur in problems in game theory
and in constrained optimization using the Lagrangian, and have the
splitting structure\[
\min_{x\in X}\max_{y\in Y}f(x,y).\]
In min-max problems, this splitting structure is exploited in numerical
procedures. See \cite{RH03} for a survey of algorithms for min-max
problems. In the general case, for example in finding weak solutions
of partial differential equations, such a splitting structure may
only be obtained after the saddle point is located, and thus is not
helpful for finding the saddle point.

A critical point $x$ is \emph{nondegenerate} if its Hessian $\nabla^{2}f(x)$
is nonsingular and it is \emph{degenerate} otherwise. The \emph{Morse
index} of a critical point is the maximal dimension of a subspace
of $X$ on which the Hessian $\nabla^{2}f(x)$ is negative definite.
In the finite dimensional case, the Morse index is the number of negative
eigenvalues of the Hessian.

Local maximizers and minimizers of $f:X\rightarrow\mathbb{R}$ are
easily found using optimization, while saddle points are harder to
find. To find saddle points of Morse index 1, one can use algorithms
motivated by the mountain pass theorem. Given points $a,b\in X$ ,
define a \emph{mountain pass} $p^{*}\in\Gamma(a,b)$ to be a minimizer
of the problem \[
\inf_{p\in\Gamma(a,b)}\sup_{0\leq t\leq1}f(p(t)),\]
if it exists. Here, $\Gamma(a,b)$ is the set of continuous paths
$p:[0,1]\rightarrow X$ such that $p(0)=a$ and $p(1)=b$. Ambrosetti
and Rabinowitz's \cite{AR73} mountain pass theorem states that under
added conditions, there is a critical value of at least $\max\{f(a),f(b)\}$.
To find saddle points of higher Morse index, it is instructive to
look at theorems establishing the existence of critical points of
Morse index higher than 1. Rabinowitz \cite{R77} proved the multidimensional
mountain pass theorem which in turn motivated the study of linking
methods to find saddle points. We shall recall theoretical material
relevant for finding saddle points of higher Morse index in this paper
as needed.

While the study of numerical methods for the mountain pass problem
began in the 70's or earlier to study problems in computational chemistry,
Choi and McKenna \cite{CM93} were the first to propose a numerical
method for the mountain pass problem to solve variational problems.
Most numerical methods for finding critical points of mountain pass
type rely on discretizing paths in $\Gamma(a,b)$ and perturbing paths
to lower the maximum value of $f$ on the path. There are a few other
methods of finding saddle points of mountain pass type that do not
involve perturbing paths, for example \cite{H04,BT07}. 

Saddle points of higher Morse index are obtained with modifications
of the mountain pass algorithm. Ding, Costa and Chen \cite{DCC99}
proposed a numerical method for finding critical points of Morse index
2, and Li and Zhou \cite{LZ01} proposed a method for finding critical
points of higher Morse index. 

In \cite{LP08}, we suggested a numerical method for finding saddle
points of mountain pass type. The key observation is that the value
\[
\sup\big\{l\geq\max\big(f(a),f(b)\big)\mid a,b\mbox{ lie in different path components of }\{x\mid f(x)\leq l\}\big\}\]
is a critical value. In other words, the supremum of all levels $l$
such that there is no path connecting $a$ and $b$ in the level set
$\{x\mid f(x)\leq l\}$ is a critical value. See Figure \ref{fig:mtn-contrast}
for an illustration of the difference between the two approaches.
An extensive theoretical analysis and some numerical results of this
approach were provided in \cite{LP08}. 

In this paper, we extend three of the themes in the level set approach
to find saddle points of higher Morse index, namely the convergence
of the basic algorithm (Sections \ref{sec:Alg-desc} and \ref{sec:Conv-ppties}),
optimality condition of sub-problem (Section \ref{sec:Opti-condns}),
and a fast locally convergent method in $\mathbb{R}^{n}$ (Sections
\ref{sec:Fast-local-convergence} and \ref{sec:Proof-of-superlinear}).
Section \ref{sec:Other-local-analysis} presents an alternative result
on convergence to a critical point similar to that of Section \ref{sec:Conv-ppties}.

We refer the reader to \cite{LP08} for examples reflecting the limitations
of the level set approach for finding saddle points of mountain pass
type, which will be relevant for the design of level set methods of
finding saddle points of general Morse index. 

\begin{figure}[h]
\begin{tabular}{|c|c|}
\hline 
\includegraphics[scale=0.4]{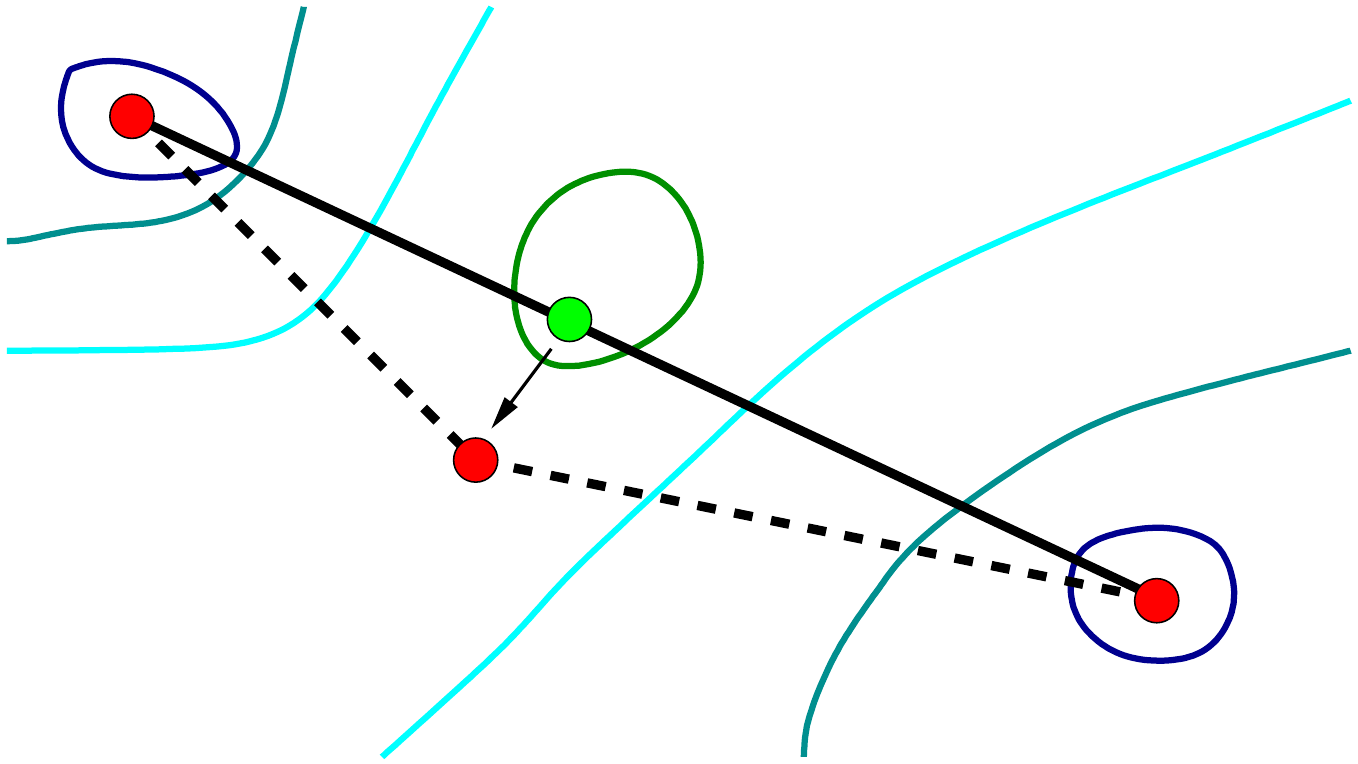} & \includegraphics[scale=0.3]{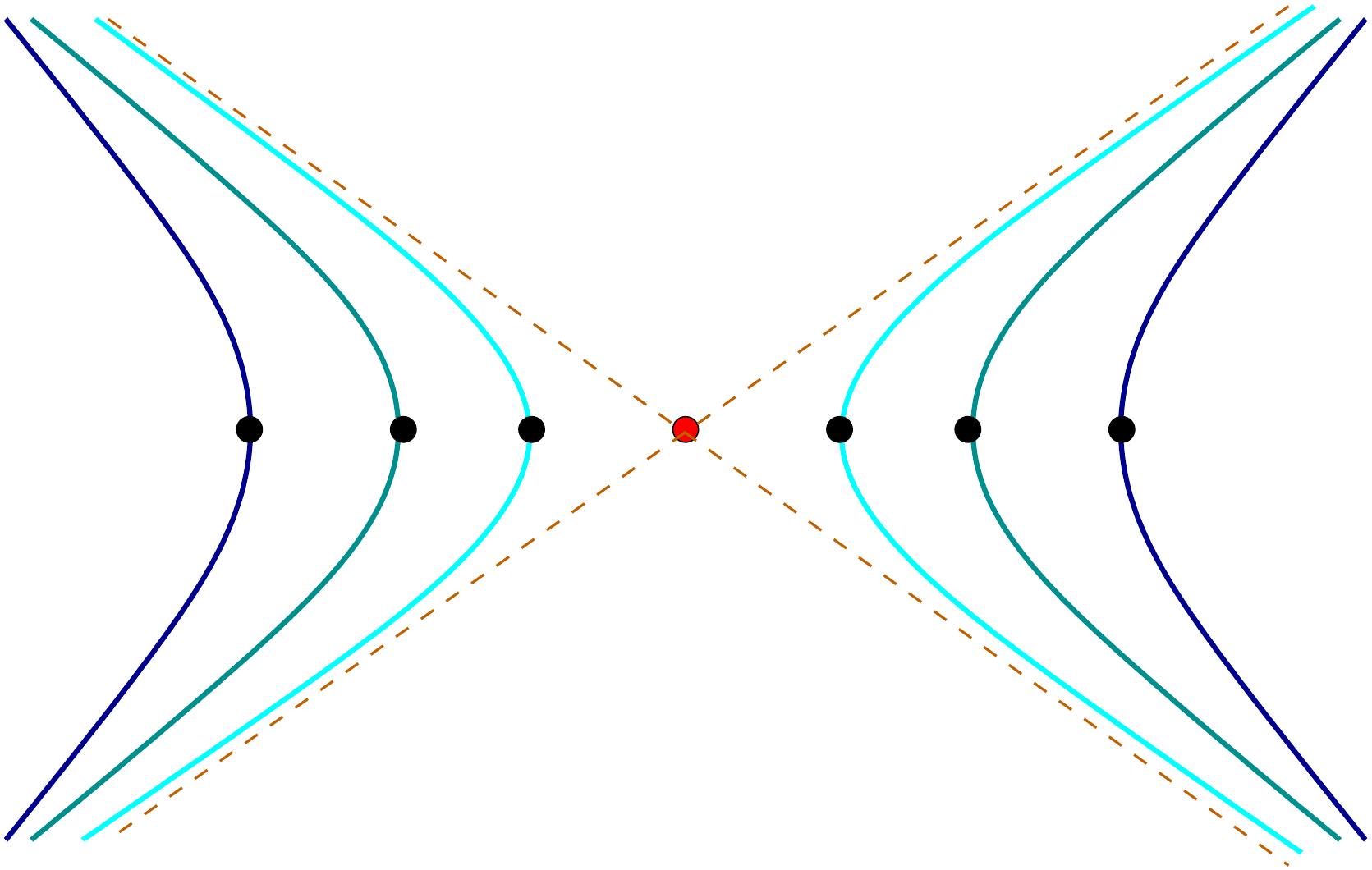}\tabularnewline
\hline
\end{tabular}

\caption{\label{fig:mtn-contrast}The diagram on the left shows the classical
method of perturbing paths for the mountain pass problem, while the
diagram on the right shows convergence to the critical point by looking
at level sets.}

\end{figure}

\section*{Notation}
\begin{description}
\item [{$\lev_{\geq b}f$}] This is the level set $\{x\mid f(x)\geq b\}$,
where $f:X\rightarrow\mathbb{R}$. The interpretations of $\lev_{\leq b}f$
and $\lev_{=b}f$ are similar.
\item [{$\mathbb{B}$}] The ball of center $\mathbf{0}$ and radius $1$.
$\mathbb{B}(x,r)$ stands for a ball of center $x$ and radius $r$.
$\mathbb{B}^{n}$ denotes the $n$-dimensional sphere in $\mathbb{R}^{n}$.
\item [{$\mathbb{S}^{n}$}] The $n$-dimensional sphere in $\mathbb{R}^{n+1}$.
\item [{$\partial$}] Subdifferential of a real-valued function, or the
relative boundary of a set. If $h:\mathbb{B}^{n}\to S$ is a homeomorphism
between $\mathbb{B}^{n}$ and $S$, then the relative boundary of
$S$ is $h(\mathbb{S}^{n-1})$.
\item [{$\mbox{lin}(A)$}] For an affine space $A$, the lineality space
$\mbox{lin}(A)$ is the space $\{a-a^{\prime}\mid a,a^{\prime}\in A\}$.
\end{description}

\section{\label{sec:Alg-desc}Algorithm for critical points}

We look at the critical point existence theorems to give an insight
on our algorithm for finding critical points of higher Morse index
below. Here is the definition of linking sets. We take our definition
from \cite[Section II.8]{Str08}.
\begin{defn}
(Linking) Let $A$ be a subset of $\mathbb{R}^{n}$, $B$ a submanifold
of $\mathbb{R}^{n}$ with relative boundary $\partial B$. Then we
say that $A$ and $\partial B$ \emph{link} if 

(a) $A\cap\partial B=\emptyset$, and 

(b) for any continuous $h:\mathbb{R}^{n}\to\mathbb{R}^{n}$ such that
$h\mid_{\partial B}=id$ we have $h(B)\cap A\neq\emptyset$.
\end{defn}
Figure \ref{fig:Linking-subsets} illustrates two examples of linking
subsets in $\mathbb{R}^{3}$. In the diagram on the left, the set
$A$ is the union of two points inside and outside the sphere $B$.
In the diagram on the right, the sets $A$ and $B$ are the interlocking
'rings'. Note however that $A$ and $B$ link does not imply that
$B$ and $A$ link, though this will be true with additional conditions.
We hope this does not cause confusion. 

We now recall the Palais-Smale condition.
\begin{defn}
(Palais-Smale condition) Let $X$ be a Banach space and $f:X\rightarrow\mathbb{R}$
be $\mathcal{C}^{1}$. We say that a sequence $\{x_{i}\}_{i=1}^{\infty}\subset X$
is a \emph{Palais-Smale sequence} if $\{f(x_{i})\}_{i=1}^{\infty}$
is bounded and $\nabla f(x_{i})\rightarrow\mathbf{0}$, and $f$ satisfies
the \emph{Palais-Smale condition} if any Palais-Smale sequence admits
a convergent subsequence.
\end{defn}
The classical multidimensional pass theorem originally due to Rabinowitz
\cite{R77} states that under added conditions, if there are linking
sets $A$ and $B$ such that $\max_{A}f<\min_{B}f$ and the Palais-Smale
condition holds, then there is a critical value of at least $\max_{A}f$
for the case when $f$ is smooth. (See Theorem \ref{thm:Rabinowitz-77}
for a statement of the multidimensional mountain pass theorem) Generalizations
in the nonsmooth case are also well-known in the literature. See for
example \cite{Jab03}.

To find saddle points of Morse index $m$, we consider finding a sequence
of linking sets $\{A_{i}\}_{i=1}^{\infty}$ and $\{B_{i}\}_{i=1}^{\infty}$
such that $\mbox{diam}(A_{i})$, the diameter of the set $A_{i}$,
decreases to zero, and the set $A_{i}$ is a subset of an $m$-dimensional
affine space. This motivates the following algorithm.

\begin{figure}
\begin{tabular}{|c|c|}
\hline 
\includegraphics[scale=0.5]{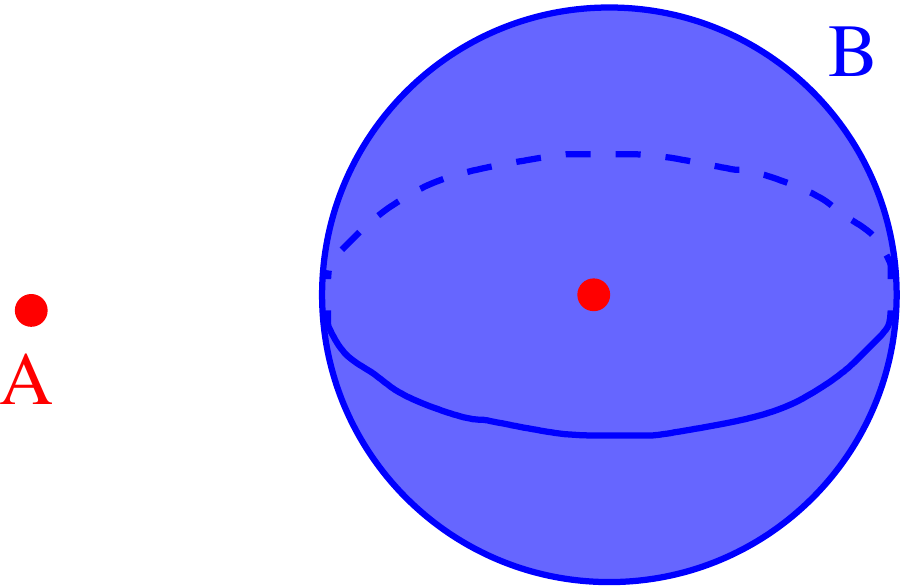} & \includegraphics[scale=0.5]{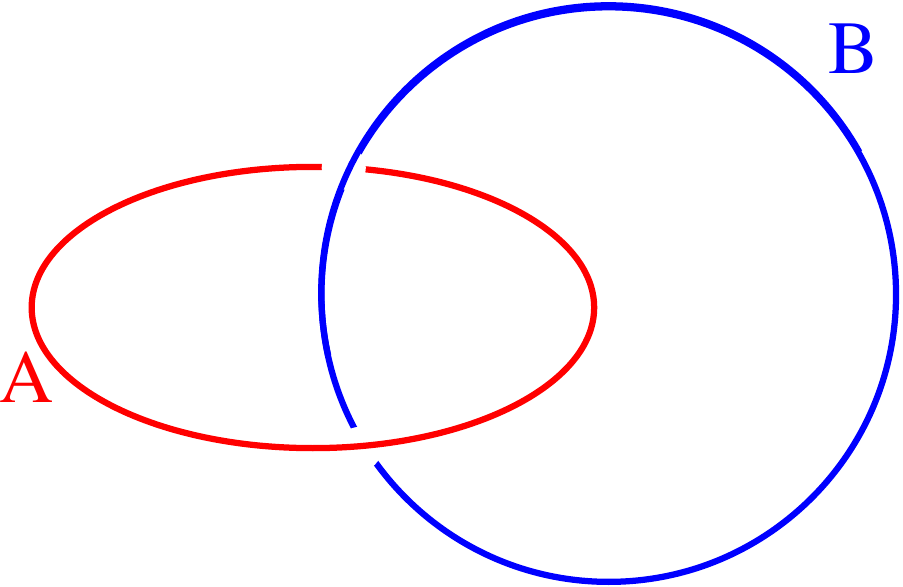}\tabularnewline
\hline
\end{tabular}

\caption{\label{fig:Linking-subsets}Linking subsets}

\end{figure}
 
\begin{algorithm}
\label{alg:saddle-points-outer}First algorithm for finding saddle
points of Morse index $m\geq1$.
\begin{enumerate}
\item Set the iteration count $i$ to $0$, and let $l_{i}$ be a lower
bound of the critical value and $u_{i}$ be an upper bound.
\item Find $x_{i}$ and $y_{i}$, where $(S_{i},x_{i},y_{i})$ is an optimizing
triple of \begin{equation}
\min_{S\in\mathcal{S}}\max_{x,y\in S\cap(\scriptsize\lev_{\geq\frac{1}{2}(l_{i}+u_{i})}f)\cap U_{i}}|x-y|,\label{eq:keyexp1}\end{equation}
where $U_{i}$ is some open set. Here, $\mathcal{S}$ is the set of
$m$-dimensional affine subspaces of $\mathbb{R}^{n}$ intersecting
$U_{i}$. In the inner maximum problem above, we take the value to
be $0$ if $S\cap(\lev_{\leq\frac{1}{2}(l_{i}+u_{i})}f)\cap U_{i}$
is empty, making the objective function above equal to $0$. For simplicity,
we shall just assume that minimizers and maximizers of the above problem
exist.
\item (Bisection) If the objective of \eqref{eq:keyexp1} is zero, then
$\frac{1}{2}(l_{i}+u_{i})$ is a lower bound of the critical value.
Set $l_{i+1}=\frac{1}{2}(l_{i}+u_{i})$ and $u_{i+1}=u_{i}$. Otherwise,
set $l_{i+1}=l_{i}$ and $u_{i+1}=\frac{1}{2}(l_{i}+u_{i})$. 
\item Increase $i$ and go back to step 2.
\end{enumerate}
\end{algorithm}
The critical step of Algorithm \ref{alg:saddle-points-outer} lies
in step 2. We elaborate on optimal conditions that will be a useful
approximate for this step in Section \ref{sec:Opti-condns}. One may
think of the set $A_{i}$ as the relative boundary (to the affine
space $S_{i}$) of $S_{i}\cap(\lev_{\geq l_{i}}f)\cap U_{i}$. A
frequent assumption we will make is nondegenericity.
\begin{defn}
We say that a critical point is \emph{nondegenerate} if its Hessian
is invertible.
\end{defn}
Algorithm \ref{alg:saddle-points-outer} requires $m>0$, but when
$m=0$, nondegenerate critical points of Morse index zero are just
strict local minimizers that can be easily found by optimization.
We illustrate two special cases of Algorithm \ref{alg:saddle-points-outer}.
\begin{example}
(Particular cases of Algorithm \ref{alg:saddle-points-outer}) (a)
For the case $m=1$, $\mathcal{S}$ is the set of lines. The inner
maximization problem in \eqref{eq:keyexp1} has its solution on the
two endpoints of $S_{i}\cap(\lev_{\geq\frac{1}{2}(l_{i}+u_{i})}f)\cap U_{i}$.
This means that \eqref{eq:keyexp1} is equivalent to finding the local
closest points between two components of $(\lev_{\leq\frac{1}{2}(l_{i}+u_{i})}f)\cap U_{i}$,
as was analyzed in \cite{LP08}.

(b) For the case $m=n$, $\mathcal{S}$ contains the whole of $\mathbb{R}^{n}$.
Hence the outer minimization problem in \eqref{eq:keyexp1} is superfluous.
The level set $(\lev_{\geq\frac{1}{2}(l_{i}+u_{i})}f)\cap U_{i}$
gets smaller and smaller as $\frac{1}{2}(l_{i}+u_{i})$ approaches
the maximum value, till it becomes a single point if the maximizer
is unique.
\end{example}

\section{\label{sec:Conv-ppties}Convergence properties}

In this section, we prove the convergence of $x_{i}$, $y_{i}$ in
Algorithm \ref{alg:saddle-points-outer} to a critical point when
they converge to a common limit. We recall some facts about nonsmooth
analysis needed for the rest of the paper. It is more economical to
prove our result for nonsmooth critical points because the proofs
are not that much harder, and nonsmooth critical points are also of
interest in applications.

Let $X$ be a Banach space, and $f:X\rightarrow\mathbb{R}$ be a locally
Lipschitz function at a given point $x$.
\begin{defn}
(Clarke subdifferential) \cite[Section 2.1]{Cla83} Suppose $f:X\rightarrow\mathbb{R}$
is locally Lipschitz at $x$. The \emph{Clarke generalized directional
derivative} of $f$ at $x$ in the direction $v\in X$ is defined
by\[
f^{\circ}(x;v)=\limsup_{t\searrow0,y\rightarrow x}\frac{f(y+tv)-f(y)}{t},\]
where $y\in X$ and $t$ is a positive scalar. The \emph{Clarke subdifferential}
of $f$ at $x$, denoted by $\partial_{C}f(x)$, is the subset of
the dual space $X^{*}$ given by\[
\left\{ \zeta\in X^{*}\mid f^{\circ}(x;v)\geq\left\langle \zeta,v\right\rangle \mbox{ for all }v\in X\right\} .\]
The point $x$ is a \emph{Clarke (nonsmooth) critical point} if $\mathbf{0}\in\partial_{C}f(x)$.
Here, $\left\langle \cdot,\cdot\right\rangle :X^{*}\times X\rightarrow\mathbb{R}$
defined by $\left\langle \zeta,v\right\rangle :=\zeta(v)$ is the
dual relation.
\end{defn}
For the particular case of $\mathcal{C}^{1}$ functions, $\partial_{C}f(x)=\{\nabla f(x)\}$.
Therefore critical points of smooth functions are also nonsmooth critical
points. From the definitions above, it is clear that an equivalent
definition of a nonsmooth critical point is $f^{\circ}(x;v)\geq0$
for all $v\in X$. This property allows us to prove that a point is
nonsmooth critical without appealing to the dual space $X^{*}$.

We now prove our result of convergence to nonsmooth critical points.
\begin{prop}
\label{pro:triples-conv}(Convergence to saddle point) Let $\bar{z}\in X$.
Suppose there is a ball $\mathbb{B}(\bar{z},r)$, a sequence of triples
$\{(S_{i},x_{i},y_{i})\}_{i=1}^{\infty}$ and a sequence $l_{i}$
monotonically increasing to $f(\bar{z})$ such that $(x_{i},y_{i})\to(\bar{z},\bar{z})$
and $(S_{i},x_{i},y_{i})$ is an optimizing triple of \eqref{eq:keyexp1}
in Algorithm \ref{alg:saddle-points-outer} for $l_{i}$ with $U_{i}=\mathbb{B}(\bar{z},r)$.
Then $\bar{z}$ is a Clarke critical point.\end{prop}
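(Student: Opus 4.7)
The argument is by contradiction. Assume $\bar{z}$ is not a Clarke critical point, so $\mathbf{0}\notin\partial_{C}f(\bar{z})$. Since $\partial_{C}f(\bar{z})$ is nonempty, convex, and weak-$*$ compact for locally Lipschitz $f$, a separation argument yields a unit vector $v_{0}\in X$ and $c>0$ with $f^{\circ}(\bar{z};v_{0})<-c$. Unpacking the $\limsup$ definition of $f^{\circ}$, there is a $\delta>0$ such that
\[
f(y+tv_{0})-f(y)\leq-ct\qquad\text{for all }\|y-\bar{z}\|<\delta,\ 0<t<\delta,
\]
making $v_{0}$ a uniform direction of descent in a neighbourhood of $\bar{z}$.

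The plan is to use $v_{0}$ to construct a subspace $S_{i}'\in\mathcal{S}$ that beats $S_{i}$ in the outer minimum of \eqref{eq:keyexp1}. Consider first the main case $x_{i}\neq y_{i}$: interior extremality of $(x_{i},y_{i})$ in $U_{i}$ forces $f(x_{i})=f(y_{i})=l_{i}$ for large $i$. Set $S_{i}':=S_{i}+tv_{0}\in\mathcal{S}$ for $t>0$ small. Any $p=q+tv_{0}\in S_{i}'\cap\lev_{\geq l_{i}}f$ with $q\in\mathbb{B}(\bar{z},\delta)$ must, by the descent estimate, satisfy $f(q)\geq l_{i}+ct$; since translation is an isometry,
\[
\diam\bigl(S_{i}'\cap\lev_{\geq l_{i}}f\cap U_{i}\bigr)\leq\diam\bigl(S_{i}\cap\lev_{\geq l_{i}+ct}f\cap(U_{i}-tv_{0})\bigr)\leq\|x_{i}-y_{i}\|,
\]
the second inequality by the optimality of $(x_{i},y_{i})$ on the larger set $S_{i}\cap\lev_{\geq l_{i}}f\cap U_{i}$. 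But the minimality of $S_{i}$ in \eqref{eq:keyexp1} forces the leftmost quantity to be $\geq\|x_{i}-y_{i}\|$, so the chain collapses to equality and one extracts an alternative maximizing pair $(\tilde{x}_{i}^{t},\tilde{y}_{i}^{t})$ on $S_{i}$ at the strictly raised level $l_{i}+ct$ with the same diameter $\|x_{i}-y_{i}\|$.

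The contradiction then comes from a diagonal passage. Choose $t_{i}\to 0$ with $ct_{i}>f(\bar{z})-l_{i}$ (feasible once $f(\bar{z})-l_{i}<c\delta$), so that $f(\tilde{x}_{i}^{t_{i}})>f(\bar{z})$ strictly; a Kuratowski upper-semicontinuity argument for the inner maximum under subspace perturbation then forces $(\tilde{x}_{i}^{t_{i}},\tilde{y}_{i}^{t_{i}})\to(\bar{z},\bar{z})$, clashing with continuity of $f$ at $\bar{z}$. The degenerate case $x_{i}=y_{i}$ is handled separately: the vanishing inner maximum traps $\bar{z}$ as a local maximizer of the restriction $f|_{S_{\infty}}$ (where $S_\infty$ is the limit of $S_i$), and when $m<n$ the full-dimensionality of the descent cone $\{v:f^{\circ}(\bar{z};v)<0\}$ lets one pick the original $v_{0}$ outside $\mbox{lin}(S_{\infty})$, reducing to the nondegenerate argument, while $m=n$ collapses to $\bar{z}$ being a local maximizer of $f$ itself and hence Clarke critical.

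The hardest step is the joint $t$-then-$i$ passage in the nondegenerate argument: one must verify that $(\tilde{x}_{i}^{t},\tilde{y}_{i}^{t})$ really tracks $(x_{i},y_{i})$ rather than escaping to a competing maximizer elsewhere in the fixed ball $U_{i}=\mathbb{B}(\bar{z},r)$. The control comes from taking $t\to 0$ first with $i$ fixed (so that $S_{i}+tv_{0}\to S_{i}$ and the perturbed inner maximizer converges back to the unperturbed one), then chaining through $i\to\infty$ along an appropriately chosen diagonal $t_{i}$.
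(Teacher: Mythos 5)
Your setup tracks the paper's exactly up to the point of translating $S_i$ by a descent direction and invoking outer minimality, but the endgame diverges — and that is where your argument breaks. The paper's contradiction is \emph{direct} and works for each fixed $i$ separately: if $(\tilde{x}_i,\tilde{y}_i)$ is a diameter-maximizing pair in $(S_i+\epsilon_1\bar{v})\cap\lev_{\geq l_i}f\cap U_i$, then $f(\tilde{x}_i)=f(\tilde{y}_i)=l_i$; translating \emph{back} by $-\epsilon_1\bar{v}$ (a direction of ascent, since $\bar{v}$ descends) gives points on $S_i$ with $f>l_i$, hence strictly interior to $A_i=S_i\cap\lev_{\geq l_i}f\cap U_i$; stretching them apart by a factor $1+\epsilon_2$ produces $\hat{x}_i,\hat{y}_i\in A_i$ with $|\hat{x}_i-\hat{y}_i|=(1+\epsilon_2)|\tilde{x}_i-\tilde{y}_i|>|x_i-y_i|=\diam(A_i)$, contradicting maximality. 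No joint $t$-then-$i$ limit passage, and no semicontinuity lemma, is needed.

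Your version instead collapses an inequality chain to equality and then attempts a diagonal passage, and this fails on two counts. First, the final step is not a contradiction: from $(\tilde{x}_i^{t_i},\tilde{y}_i^{t_i})\to(\bar{z},\bar{z})$ continuity only gives $f(\tilde{x}_i^{t_i})\to f(\bar{z})$, which is entirely compatible with $f(\tilde{x}_i^{t_i})>f(\bar{z})$ for every $i$ (the values can approach $f(\bar{z})$ from above); with $t_i\to 0$ the margin $ct_i$ you build in also vanishes, so there is no uniform gap to clash with. Second, the ``Kuratowski upper-semicontinuity'' step that is supposed to force $(\tilde{x}_i^{t_i},\tilde{y}_i^{t_i})\to(\bar{z},\bar{z})$ is unjustified here: the proposition imposes no uniqueness on the inner maximizer, and since $(x_i,y_i)$ is necessarily excluded from the raised level set $\lev_{\geq l_i+ct_i}f$ (as $f(x_i)=f(y_i)=l_i$), the replacement pair can sit anywhere in $U_i$ and need not track $(x_i,y_i)$ at all. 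There is also a smaller gap in the chain itself, since $U_i-tv_0\not\subset U_i$, so the inequality $\diam(S_i\cap\lev_{\geq l_i+ct}f\cap(U_i-tv_0))\leq|x_i-y_i|$ does not follow from the optimality of $(x_i,y_i)$ without an extra localization step. Your separate treatment of $x_i=y_i$ does flag a corner the paper's proof quietly skips (its stretch yields no strict increase when $\tilde{x}_i=\tilde{y}_i$), but the appeal to a limit $S_\infty$ and a ``full-dimensional descent cone'' is too sketchy to repair the main argument.
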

\begin{proof}
Seeking a contradiction, suppose there exists some direction $\bar{v}$
such that $f^{\circ}(\bar{z};\bar{v})<0$. This means that there is
some $\bar{\epsilon}>0$ such that if $|z-\bar{z}|<\bar{\epsilon}$
and $\epsilon<\bar{\epsilon}$, then \begin{eqnarray*}
\frac{f(z+\epsilon\bar{v})-f(z)}{\epsilon} & < & \frac{1}{2}f^{\circ}(\bar{z};\bar{v})\\
\Rightarrow f(z+\epsilon\bar{v}) & < & f(z)+\epsilon\frac{1}{2}f^{\circ}(\bar{z};\bar{v}).\end{eqnarray*}
Suppose $i$ is large enough so that $x_{i},y_{i}\in\mathbb{B}(\bar{z},\frac{\bar{\epsilon}}{2})$,
and that $x_{i},y_{i}\in A_{i}:=S_{i}\cap(\lev_{\geq l_{i}}f)\cap\mathbb{B}(\bar{z},r)$
are such that $|x_{i}-y_{i}|=\diam(A_{i})$. Consider the set $\tilde{A}:=(S_{i}+\epsilon_{1}\bar{v})\cap(\lev_{\geq l_{i}}f)\cap\mathbb{B}(\bar{z},r)$,
where $\epsilon_{1}>0$ is arbitrarily small. Let $\tilde{x}_{i},\tilde{y}_{i}\in\tilde{A}$
be such that $|\tilde{x}_{i}-\tilde{y}_{i}|=\diam(\tilde{A})$. From
the minimality of the outer minimization, we have $|\tilde{x}_{i}-\tilde{y}_{i}|\geq|x_{i}-y_{i}|$.
Note that $f(\tilde{x}_{i})=f(\tilde{y}_{i})=l_{i}$. Then \begin{eqnarray*}
f(\tilde{x}_{i}) & < & f(\tilde{x}_{i}-\epsilon_{1}\bar{v})+\epsilon_{1}\frac{1}{2}f^{\circ}(\bar{z};\bar{v})\\
\implies f(\tilde{x}_{i}-\epsilon_{1}\bar{v}) & > & f(\tilde{x}_{i})-\epsilon_{1}\frac{1}{2}f^{\circ}(\bar{z};\bar{v})\\
 & > & l_{i}.\end{eqnarray*}
The continuity of $f$ implies that we can find some $\epsilon_{2}>0$
such that $\hat{x}_{i}:=\tilde{x}_{i}-\epsilon_{1}\bar{v}+\epsilon_{2}(\tilde{x}_{i}-\tilde{y}_{i})$
lies in $A_{i}$. Similarly, $\hat{y}_{i}:=\tilde{y}_{i}-\epsilon_{1}\bar{v}$
lie in $A_{i}$ as well. But\begin{eqnarray*}
|\hat{x}_{i}-\hat{y}_{i}| & > & |\tilde{x}_{i}-\tilde{y}_{i}|\\
 & \geq & |x_{i}-y_{i}|.\end{eqnarray*}
This contradicts the maximality of $|x_{i}-y_{i}|$ in $A_{i}$, and
thus $\bar{z}$ must be a critical point. 
\end{proof}

\section{\label{sec:Opti-condns}Optimality conditions}

We now reduce the min-max problem \eqref{eq:keyexp1} to a condition
on the gradients $\nabla f(x_{i})$ and $\nabla f(y_{i})$ that is
easy to verify numerically. This condition will help in the numerical
solution of \eqref{eq:keyexp1}. We use methods in sensitivity analysis
of optimization problems (as is done in \cite{BS00}) to study how
varying the $m$-dimensional affine space $S$ in an $(m+1)$-dimensional
subspace affects the optimal value in the inner maximization problem
in \eqref{eq:keyexp1}. We conform as much as possible to the notation
in \cite{BS00} throughout this section. 

Consider the following parametric optimization problem $(P_{u})$
in terms of $u\in\mathbb{R}$ as an $m+1$ dimensional model in $\mathbb{R}^{m+1}$
of the inner maximization problem in \eqref{eq:keyexp1}:\begin{eqnarray}
(P_{u}):\qquad v(u):= & \min & F(x,y,u):=-|x-y|^{2}\nonumber \\
 & \mbox{s.t.} & G(x,y,u)\in K,\nonumber \\
 &  & x,y\in\mathbb{R}^{m+1},\label{eq:stylized-problem}\end{eqnarray}
where $G:(\mathbb{R}^{m+1})^{2}\times\mathbb{R}\rightarrow\mathbb{R}^{4}$
and $K\subset\mathbb{R}^{4}$ are defined by \[
G(x,y,u):=\left(\begin{array}{c}
-f(x)+b\\
-f(y)+b\\
(0,0,\dots,0,u,1)x\\
(0,0,\dots,0,u,1)y\end{array}\right),\qquad K:=\mathbb{R}_{-}^{2}\times\{0\}^{2}.\]
The problem $(P_{u})$ reflects the inner maximization problem of
\eqref{eq:keyexp1}. Due to the standard practice of writing optimization
problems as minimization problems, \eqref{eq:stylized-problem} is
a minimization problem instead. We hope this does not cause confusion. 

Let $S(u)$ be the $m$-dimensional subspace orthogonal to $(0,\dots,0,u,1)$.
The first two components of $G(x,y,u)$ model the constraints $f(x)\geq b$
and $f(y)\geq b$, while the last two components enforce $x,y\in S(u)$.
Denote an optimal solution to $(P_{u})$ to be $(\bar{x}(u),\bar{y}(u))$,
and let $(\bar{x},\bar{y}):=(\bar{x}(0),\bar{y}(0))$. We make the
following assumption throughout.
\begin{assumption}
\label{ass:uniqueness}(Uniqueness of optimizers) $(P_{0})$ has a
unique solution $\bar{x}=\mathbf{0}$ and $\bar{y}=(0,\dots,0,1,0)$
at $u=0$. 
\end{assumption}
We shall investigate how the set of minimizers of $(P_{u})$ behaves
with respect to $u$ at $0$.

The derivatives of $F$ and $G$ with respect to $x$ and $y$, denoted
by $D_{x,y}F$ and $D_{x,y}G$, are\begin{eqnarray}
D_{x,y}F(x,y,u) & = & 2\big(\begin{array}{cc}
(y-x)^{T} & (x-y)^{T}\end{array}\big),\nonumber \\
\mbox{ and }D_{x,y}G(x,y,u) & = & \left(\begin{array}{cc}
-\nabla f(x)^{T}\\
 & -\nabla f(y)^{T}\\
(0,0,\dots,0,u,1)\\
 & (0,0,\dots,0,u,1)\end{array}\right),\label{eq:D-x-y}\end{eqnarray}
where the blank terms in $D_{x,y}G(x,y,u)$ are all zero. 

The \emph{Lagrangian} is the function $L:\mathbb{R}^{m+1}\times\mathbb{R}^{m+1}\times\mathbb{R}^{4}\times\mathbb{R}\rightarrow\mathbb{R}$
defined by \[
L(x,y,\lambda,u):=F(x,y,u)+\sum_{i=1}^{4}\lambda_{i}G_{i}(x,y,u).\]
We say that $\lambda:=(\lambda_{1},\lambda_{2},\lambda_{3},\lambda_{4})$,
depending on $u$, is a \emph{Lagrange multiplier }if $D_{x,y}L(x,y,\lambda,u)=\mathbf{0}$
and $\lambda\in N_{K}(G(x,y,u))$, and the set of all Lagrange multipliers
is denoted by $\Lambda(x,y,u)$. Here, $N_{K}(G(x,y,u))$ stands for
the \emph{normal cone} defined by \[
N_{K}\big(G(x,y,u)\big):=\{v\in\mathbb{R}^{4}\mid v^{T}[w-G(x,y,u)]\leq0\mbox{ for all }w\in K\}.\]
We are interested in the set $\Lambda(\bar{x},\bar{y},0)$. It is
clear that optimal solutions must satisfy $G(\bar{x},\bar{y},0)=\mathbf{0}$,
so $\lambda\in N_{K}(\mathbf{0})=\mathbb{R}_{+}^{2}\times\mathbb{R}^{2}$. 

The condition $D_{x,y}L(\bar{x},\bar{y},\lambda,0)=\mathbf{0}$ reduces
to \begin{eqnarray*}
D_{x,y}\left(F(x,y,0)+\sum_{i=1}^{4}\lambda_{i}G_{i}(x,y,0)\right)\mid_{x=\bar{x},y=\bar{y}} & = & \mathbf{0}\\
\Rightarrow2\left({\bar{y}-\bar{x}\atop \bar{x}-\bar{y}}\right)+\lambda_{1}\left({-\nabla f(\bar{x})\atop \mathbf{0}}\right)+\lambda_{2}\left({\mathbf{0}\atop -\nabla f(\bar{y})}\right)\qquad\qquad\qquad\\
+\lambda_{3}\left({(0,0,\dots,0,0,1)^{T}\atop \mathbf{0}}\right)+\lambda_{4}\left({\mathbf{0}\atop (0,0,\dots,0,0,1)^{T}}\right) & = & \mathbf{0}.\end{eqnarray*}

Here, $G_{i}(\bar{x},\bar{y},0)$ is the $i$th row of $G(\bar{x},\bar{y},0)$
for $1\leq i\leq4$. This is exactly the KKT conditions, and can be
rewritten as\begin{eqnarray}
2(\bar{y}-\bar{x})-\lambda_{1}\nabla f(\bar{x})+\lambda_{3}(0,0,\dots,0,0,1)^{T} & = & 0,\nonumber \\
2(\bar{x}-\bar{y})-\lambda_{2}\nabla f(\bar{y})+\lambda_{4}(0,0,\dots,0,0,1)^{T} & = & 0.\label{eq:small-KKT}\end{eqnarray}

It is clear that $\lambda_{1}$ and $\lambda_{2}$ cannot be zero,
and so we have \begin{eqnarray*}
\nabla f(\bar{x})^{T} & = & \left(0,0,\dots,0,\frac{2}{\lambda_{1}},\frac{\lambda_{3}}{\lambda_{1}}\right),\\
\nabla f(\bar{y})^{T} & = & \left(0,0,\dots,0,-\frac{2}{\lambda_{2}},\frac{\lambda_{4}}{\lambda_{2}}\right).\end{eqnarray*}
Recall that $\lambda_{1},\lambda_{2}\geq0$, so this gives more information
about $\nabla f(\bar{x})$ and $\nabla f(\bar{y})$.

We next discuss the optimality of the outer minimization problem of
\eqref{eq:keyexp1}, which can be studied by perturbations in the
parameter $u$ of \eqref{eq:stylized-problem}, but we first recall
a result on the first order sensitivity of optimal solutions.
\begin{defn}
(Robinson's constraint qualification) (from \cite[Definition 2.86]{BS00})
We say that \emph{Robinson's constraint qualification }holds at $(\bar{x},\bar{y})\in\mathbb{R}^{m+1}\times\mathbb{R}^{m+1}$
 if the regularity condition \[
\mathbf{0}\in\intr\left\{ G(\bar{x},\bar{y},0)+\Range\big(D_{x,y}G(\bar{x},\bar{y},0)\big)-K\right\} \]
is satisfied.\end{defn}
\begin{thm}
\label{thm:from-BS-4.26}(Parametric optimization) (from \cite[Theorem 4.26]{BS00})
For problem\eqref{eq:stylized-problem}, let $(\bar{x}(u),\bar{y}(u))$
be as defined earlier. Suppose that 
\begin{enumerate}
\item [(i)] Robinson's constraint qualification holds at $(\bar{x}(0),\bar{y}(0))$,
and
\item [(ii)] if $u_{n}\rightarrow0$, then $(P_{u_{n}})$ possesses an
optimal solution $(\bar{x}(u_{n}),\bar{y}(u_{n}))$ that has a limit
point $(\bar{x},\bar{y})$.
\end{enumerate}
Then $v(\cdot)$ is directionally differentiable at $u=0$ and \[
v^{\prime}(0)=D_{u}L(x,y,\lambda,0).\]

\end{thm}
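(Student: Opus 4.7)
The plan is to follow the standard perturbation-analytic scheme for parametric nonlinear programs and verify two matching one-sided bounds on $v'(0)$ that together yield directional differentiability with the stated formula. Since the statement is quoted from \cite[Theorem 4.26]{BS00}, in the paper itself the honest thing is to cite their proof; the sketch below is what I would write if I had to redevelop it.

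First I would exploit Robinson's constraint qualification to obtain a nonempty, bounded set of Lagrange multipliers at $(\bar{x},\bar{y})$ and to propagate feasibility under the perturbation $u \mapsto (0,\dots,0,u,1)$. Under RCQ, Robinson's stability theorem produces, for each small $u$, a feasible point $(x(u),y(u)) = (\bar{x},\bar{y}) + O(u)$ of $(P_u)$ by correcting the first-order residual $G(\bar{x},\bar{y},u) - G(\bar{x},\bar{y},0)$ back into $K$. Plugging this feasible candidate into the objective and expanding to first order gives the upper estimate
\[
v(u) \leq F(\bar{x},\bar{y},0) + u\, D_u L(\bar{x},\bar{y},\lambda,0) + o(u),
\]
valid for any Lagrange multiplier $\lambda \in \Lambda(\bar{x},\bar{y},0)$.

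Next I would obtain the matching lower bound using hypothesis (ii). Along any sequence $u_n \to 0$, the limit point $(\bar{x}(u_n),\bar{y}(u_n)) \to (\bar{x},\bar{y})$ lets me write $v(u_n) = F(\bar{x}(u_n),\bar{y}(u_n),u_n)$ and expand the Lagrangian around $(\bar{x},\bar{y},\lambda,0)$. The KKT identity $D_{x,y} L(\bar{x},\bar{y},\lambda,0) = \mathbf{0}$ kills the first-order terms in $(x,y)$, while the normal-cone membership $\lambda \in N_K(G(\bar{x},\bar{y},0))$ controls the residual in $G$, so I obtain $v(u_n) \geq F(\bar{x},\bar{y},0) + u_n\, D_u L(\bar{x},\bar{y},\lambda,0) + o(u_n)$. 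Combining with the upper estimate yields $v'(0) = D_u L(\bar{x},\bar{y},\lambda,0)$.

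The main obstacle is the upper-bound construction: Robinson's stability theorem requires a sharp $O(u)$ feasibility estimate, and one must check that the explicit $K = \mathbb{R}_-^2 \times \{0\}^2$ together with the block structure of $D_{x,y} G$ in \eqref{eq:D-x-y} satisfies RCQ, which here reduces to a linear-independence-type condition on $\nabla f(\bar{x})$, $\nabla f(\bar{y})$, and the two coordinate forms determined by $S(0)$. A secondary subtlety is that when $\Lambda(\bar{x},\bar{y},0)$ is not a singleton, the sharp sensitivity formula reads $v'(0) = \inf_{\lambda \in \Lambda} D_u L$; the cited statement implicitly selects a distinguished multiplier, which for problem \eqref{eq:stylized-problem} is pinned down by \eqref{eq:small-KKT}.
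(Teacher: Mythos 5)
The paper does not prove this statement; it is imported verbatim as a citation of Bonnans and Shapiro's Theorem 4.26, so there is no internal proof to compare your sketch against. Your outline does follow the standard two-sided-bound scheme that underlies results of this type (Robinson-stability to construct $O(u)$-feasible perturbations for the upper estimate, Lagrangian expansion with the stationarity and normal-cone conditions for the lower estimate), and your identification of the role of hypothesis (ii) in controlling the limit point is on target. Two small cautions: the sharp sensitivity formula for a minimization problem, at a fixed optimal solution, is a \emph{supremum} over the multiplier set, not an infimum as you wrote in your closing remark; and the upper-bound step as phrased skips the mechanism by which $F$ gets replaced by $L$ along the feasible curve (one needs $\lambda^{T}G(x(u),y(u),u)\le 0$ from the normal-cone membership to pass from the objective to the Lagrangian). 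Neither point affects the validity of citing the result, which is all the paper does.
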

We proceed to prove our result.
\begin{prop}
(Optimality condition on $\nabla f(\bar{y})$) \label{pro:perturb-decrease}Consider
the setup so far in this section and suppose Assumption \ref{ass:uniqueness}
holds. If $\nabla f(\bar{y})$ is not a positive multiple of $(0,0,\dots,0,1,0)^{T}$
at $u=0$, then we can perturb $u$ so that \eqref{eq:stylized-problem}
has an increase in objective. \end{prop}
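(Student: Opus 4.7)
The plan is to apply Theorem~\ref{thm:from-BS-4.26} to the parametric problem $(P_u)$, extract the directional derivative $v'(0)$, and translate it through the KKT system \eqref{eq:small-KKT} into the asserted condition on $\nabla f(\bar y)$. Once $v'(0)$ is shown to be nonzero, a one-sided perturbation of $u$ in the appropriate sign raises $v(u)$, which is the optimal value of the minimization problem \eqref{eq:stylized-problem}.

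Before invoking the theorem I would verify its two hypotheses at $(\bar x, \bar y, 0)$. Robinson's constraint qualification is clean: the last two rows $e_{m+1}^T$ of $D_{x,y}G$ in \eqref{eq:D-x-y} prescribe directly the $(m{+}1)$-th components of perturbations in the $x$- and $y$-blocks, while the first two rows $-\nabla f(\bar x)^T$ and $-\nabla f(\bar y)^T$ carry nonzero $e_m$-components ($2/\lambda_1$ and $-2/\lambda_2$ respectively by the KKT formulas already derived). Hence $\mathrm{Range}(D_{x,y}G)$ together with $-K = \mathbb{R}_+^2\times\{0\}^2$ covers $\mathbb{R}^4$. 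Hypothesis (ii), that perturbed problems admit optimizers accumulating at $(\bar x, \bar y)$, I would treat as a mild standing regularity condition, supported by continuity of $f$ and Assumption~\ref{ass:uniqueness}.

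Theorem~\ref{thm:from-BS-4.26} then delivers $v'(0) = D_u L(\bar x, \bar y, \lambda, 0)$. Since $F$, $G_1$ and $G_2$ do not involve $u$, the only contribution is $\lambda_3 D_u G_3 + \lambda_4 D_u G_4$, where $D_u G_3 = \bar x_m = 0$ and $D_u G_4 = \bar y_m = 1$, producing $v'(0) = \lambda_4$. Reading off the last coordinate of the second line of \eqref{eq:small-KKT} gives $\lambda_4 = \lambda_2\,[\nabla f(\bar y)]_{m+1}$ with $\lambda_2 > 0$. Since the KKT expressions also confine $\nabla f(\bar y)$ to $\mathrm{span}(e_m, e_{m+1})$, the hypothesis that $\nabla f(\bar y)$ is not a (positive) scalar multiple of $(0,\dots,0,1,0)^T$ reduces to $[\nabla f(\bar y)]_{m+1} \ne 0$, and thus forces $\lambda_4 \ne 0$. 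Then $v'(0)\ne 0$ and, because $L$ is affine in $u$ (so $v$ is honestly differentiable near $0$ once the Lagrange multiplier is unique), perturbing $u$ in the sign of $\lambda_4$ produces the claimed increase in objective. The main delicacy, in my view, will be cleanly checking Robinson's CQ and correctly tracking the sign of the one-sided directional derivative coming from Theorem~\ref{thm:from-BS-4.26}, so that one identifies a genuinely feasible direction of $u$ that raises the value $v$.
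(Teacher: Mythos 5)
Your overall route is the same as the paper's: verify Robinson's constraint qualification, verify hypothesis (ii) of Theorem~\ref{thm:from-BS-4.26}, apply the theorem to obtain $v'(0)=\lambda_4$, and then use the KKT identities $\nabla f(\bar y)=(0,\dots,0,-\tfrac{2}{\lambda_2},\tfrac{\lambda_4}{\lambda_2})$ to translate the stated hypothesis on $\nabla f(\bar y)$ into $\lambda_4\neq 0$. The Robinson's CQ check and the computation $D_uL=\lambda_4$ match the paper closely.

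The one genuine gap is your treatment of hypothesis (ii). You write that you ``would treat [it] as a mild standing regularity condition,'' but this is precisely the part of the argument that needs real work, and the paper supplies it. Concretely, the paper fixes a sequence $t_n\to 0$, takes any accumulation point $(x',y')$ of the perturbed optimizers $(\bar x(t_n),\bar y(t_n))$, notes $(x',y')$ is feasible for $(P_0)$, and then must show $(x',y')=(\bar x,\bar y)$. One inequality $|x'-y'|\le|\bar x-\bar y|$ follows from optimality of $(\bar x,\bar y)$ for $(P_0)$, but the reverse inequality requires producing a competitor in $S(t_n)\cap\lev_{=b}f$ near $\bar y$: the paper applies the Implicit Function Theorem to the system $f(y)=b$, $(0,\dots,0,u,1)^T y=0$, which is solvable near $\bar y$ exactly because $\partial f/\partial y_m(\bar y)=-2/\lambda_2\neq 0$ (another use of the KKT structure). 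Pairing this competitor with $\bar x\in S(t_n)$ gives $|\bar x-y_n|\le|\bar x(t_n)-\bar y(t_n)|$ and, in the limit, $|\bar x-\bar y|\le|x'-y'|$; equality plus the uniqueness in Assumption~\ref{ass:uniqueness} forces $(x',y')=(\bar x,\bar y)$. Without this construction, condition (ii) of the cited sensitivity theorem is simply not established, and the conclusion $v'(0)=\lambda_4$ is not licensed. Your instinct to flag the sign/one-sidedness of the directional derivative as a delicacy is fine; that part is harmless since $u\mapsto D_uG$ is constant, so the two one-sided derivatives are $\pm\lambda_4$ and either sign of nonzero $\lambda_4$ gives an increasing perturbation.
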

\begin{proof}
We first obtain first order sensitivity information from Theorem \ref{thm:from-BS-4.26}.
Recall that by definition, Robinson's constraint qualification holds
at $(\bar{x},\bar{y})$ if \[
\mathbf{0}\in\mbox{int}\left\{ G(\bar{x},\bar{y},0)+\mbox{Range}\big(D_{x,y}G(\bar{x},\bar{y},0)\big)-K\right\} .\]
From \eqref{eq:small-KKT}, it is clear that $\nabla f(\bar{x})$
and $(0,\dots,0,0,1)$ are linearly independent, and so are $\nabla f(\bar{y})$
and $(0,\dots,0,0,1)$. From the formula of $D_{x,y}G(\bar{x},\bar{y},0)$
in \eqref{eq:D-x-y}, we see immediately that $\mbox{Range}\left(D_{x,y}G(\bar{x},\bar{y},0)\right)=\mathbb{R}^{4}$,
thus the Robinson's constraint qualification indeed holds.

Suppose that $\lim_{n\to\infty}t_{n}=0$. We prove that part (ii)
of Theorem \ref{thm:from-BS-4.26} holds by proving that $(\bar{x}(t_{n}),\bar{y}(t_{n}))$
cannot have any other limit points. Suppose that $(x^{\prime},y^{\prime})$
is a limit point of $\{(\bar{x}(t_{n}),\bar{y}(t_{n}))\}_{n=1}^{\infty}$.
It is clear that $x^{\prime},y^{\prime}\in S(0)$.

We can find $y_{n}\rightarrow\bar{y}$ such that $y_{n}\in S(t_{n})$
and $f(y_{n})=b$. For example, we can use the Implicit Function Theorem
with the constraints \begin{eqnarray*}
f(y) & = & b,\\
g(y,u) & = & 0,\end{eqnarray*}
where $g(y,u)=(0,0,\dots,0,u,1)^{T}y$. The derivatives with respect
to $y_{m}$ and $y_{m+1}$ are \[
\begin{array}{ccc}
\frac{\partial}{\partial y_{m}}f(\bar{y})=-\frac{2}{\lambda_{2}}, &  & \frac{\partial}{\partial y_{m+1}}f(\bar{y})=\frac{\lambda_{4}}{\lambda_{2}},\\
\frac{\partial}{\partial y_{m}}g(\bar{y},0)=0, &  & \frac{\partial}{\partial y_{m+1}}g(\bar{y},0)=1.\end{array}\]
Therefore, for $y_{1}=y_{2}=\cdots=y_{m-2}=y_{m-1}=0$ and any choice
of $u$ close to zero, there is some $y_{m}$ and $y_{m+1}$ such
that $y\in S(u)$ and $f(y)=b$.

Clearly $|\bar{x}-y_{n}|\leq|\bar{x}(t_{n})-\bar{y}(t_{n})|$. Taking
limits as $n\rightarrow\infty$, we have $|\bar{x}-\bar{y}|\leq|x^{\prime}-y^{\prime}|$.
Since $(\bar{x},\bar{y})$ minimize $F$, it follows that $|\bar{x}-\bar{y}|=|x^{\prime}-y^{\prime}|$,
and by the uniqueness of solutions to $(P_{0})$, we can assume that
$x^{\prime}=\bar{x}$ and $y^{\prime}=\bar{y}$.

Theorem \ref{thm:from-BS-4.26} implies that $v^{\prime}(0)=D_{u}L(x,y,\lambda,0)$.
We now calculate $D_{u}L(x,y,\lambda,0)$. It is clear that $D_{u}G(x,y,\lambda,0)=(0,0,0,1)^{T}$,
and so $D_{u}L(x,y,\lambda,0)=\lambda_{4}$. Since $\nabla f(\bar{y})$
is not a multiple of $(0,0,\dots,0,1,0)^{T}$ at $u=0$, $\lambda_{4}\neq0$,
and this gives the conclusion we need.
\end{proof}
A direct consequence of Proposition \ref{pro:perturb-decrease} is
the following easily checkable condition.
\begin{thm}
(Gradients are opposite) \label{pro:opposite-directions}Let $(S_{i},x_{i},y_{i})$
be an optimizing triple to \eqref{eq:keyexp1} for some $l_{i}$ such
that $S_{i}\cap(\lev_{\geq l_{i}}f)\cap U_{i}$ is closed, and $(x_{i},y_{i})$
is the unique pair of points in $S_{i}\cap(\lev_{\geq l_{i}}f)\cap U_{i}$
satisfying $|x_{i}-y_{i}|=\diam(S_{i}\cap(\lev_{\geq l_{i}}f)\cap U_{i})$.
Then $\nabla f(x_{i})$ and $\nabla f(y_{i})$ are nonzero and point
in opposite directions.\end{thm}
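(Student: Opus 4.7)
The plan is to exploit the two layers of optimality in (\ref{eq:keyexp1}) separately. The inner maximization already yielded KKT conditions at the start of Section \ref{sec:Opti-condns} that constrain the tangential components of $\nabla f(x_i)$ and $\nabla f(y_i)$ along $\aff(S_i)$. The outer minimization will be used via Proposition \ref{pro:perturb-decrease}, applied once for every admissible rotation of $S_i$, to kill the transverse components.

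I would first translate so $x_i=\mathbf{0}$ and orient $\aff(S_i)$ so that $y_i-x_i$ lies along a coordinate axis of $\aff(S_i)$. For each unit vector $v\in\mbox{lin}(\aff(S_i))^{\perp}$, set $V_v:=\mbox{lin}(\aff(S_i))+\mathbb{R}v$. After a suitable orthogonal rescaling, the $(m+1)$-dimensional subspace $V_v$ plays the role of $\mathbb{R}^{m+1}$ in Section \ref{sec:Opti-condns}, the pair $(x_i,y_i)$ plays the role of $(\bar{x},\bar{y})$, and the one-parameter family $S(u)$ of (\ref{eq:stylized-problem}) traces out rotations of $S_i$ inside $V_v$. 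The uniqueness hypothesis of the present theorem supplies Assumption \ref{ass:uniqueness}, and the closedness hypothesis on $S_i\cap(\lev_{\geq l_i}f)\cap U_i$ gives the attainment needed for hypothesis (ii) of Theorem \ref{thm:from-BS-4.26}. Since any $S\subset V_v$ admissible for the restricted problem is also admissible for the original (\ref{eq:keyexp1}), optimality of $S_i$ in (\ref{eq:keyexp1}) implies optimality in the $V_v$-restricted problem, so no $u$-perturbation may strictly decrease the inner maximum.

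Proposition \ref{pro:perturb-decrease} then forces its hypothesis to fail in each $V_v$: $\langle\nabla f(y_i),v\rangle=0$. Letting $v$ range over an orthonormal basis of $\mbox{lin}(\aff(S_i))^{\perp}$ gives $\nabla f(y_i)\in\mbox{lin}(\aff(S_i))$. The inner KKT formula $\nabla f(\bar{y})^{T}=(0,\dots,0,-2/\lambda_{2},\lambda_{4}/\lambda_{2})$, now with $\lambda_{4}=0$, says that the components of $\nabla f(y_i)$ inside $\mbox{lin}(\aff(S_i))$ orthogonal to $y_i-x_i$ vanish, while the component along $(y_i-x_i)/|y_i-x_i|$ equals $-2/\lambda_{2}$. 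The positivity $\lambda_{2}>0$ is forced by the KKT equation (\ref{eq:small-KKT}) together with $\bar{x}\neq\bar{y}$, so this multiple is strictly negative. Exchanging the roles of $x_i$ and $y_i$ and repeating the argument gives that $\nabla f(x_i)$ is a strictly positive multiple of the same unit vector. Both gradients are therefore nonzero and point in opposite directions.

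The main delicacy will not lie in the argument itself but in the bookkeeping that confirms the $\mathbb{R}^{m+1}$ stylized model of Section \ref{sec:Opti-condns} genuinely captures the restriction of (\ref{eq:keyexp1}) to each $V_v$: Assumption \ref{ass:uniqueness}, Robinson's constraint qualification, and the limit-point hypothesis of Theorem \ref{thm:from-BS-4.26} all need to transfer to every $V_v$, and the coordinate-free conclusions about $\nabla f(x_i)$ and $\nabla f(y_i)$ need to be reassembled from the full family of restrictions.
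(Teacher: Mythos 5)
Your proof takes the same route as the paper's: reduce the optimality of $(S_i,x_i,y_i)$ to the $(m{+}1)$-dimensional stylized model of Section~\ref{sec:Opti-condns} and invoke Proposition~\ref{pro:perturb-decrease} together with the inner KKT system~\eqref{eq:small-KKT}. You merely make explicit what the paper's terse ``look at an $m{+}1$ dimensional subspace'' leaves implicit --- that the ambient subspace $V_v$ must range over all $v\perp\mbox{lin}(\aff(S_i))$ in order to annihilate every transverse component of $\nabla f(y_i)$ --- and you correctly track the sign of the resulting multiple (a negative multiple of $y_i-x_i$, hence positive of $x_i-y_i$), which the literal wording of Proposition~\ref{pro:perturb-decrease} garbles but the paper's own use of it does not.
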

\begin{proof}
We can look at an $m+1$ dimensional subspace which reduces to the
setting that we are considering so far in this section. By Proposition
\ref{pro:perturb-decrease}, $\nabla f(y_{i})$ is a positive multiple
of $x_{i}-y_{i}$ at optimality. Similarly, $\nabla f(x_{i})$ is
a positive multiple of $y_{i}-x_{i}$ at optimality, and the result
follows.
\end{proof}

We remark on how to start the algorithm. We look at critical points
of Morse index 1 first. In this case, two local minima $\bar{x}_{1}$,
$\bar{x}_{2}$ are needed before the mountain pass algorithm can guarantee
the existence of a critical point $\bar{x}_{3}$. For any value above
the critical value corresponding to the critical point of Morse index
1, the level set contains a path connecting $\bar{x}_{1}$ and $\bar{x}_{2}$
passing through $\bar{x}_{3}$. 

To find the next critical point of Morse index 2 we remark that under
mild conditions, if $\lev_{\leq a}f$ contains a closed path homeomorphic
to $\mathbb{S}_{1}$, the boundary of the disc of dimension 2, then
the linking principle guarantees the existence of a critical point
through the multidimensional mountain pass theorem. Theorem \ref{thm:Rabinowitz-77}
which we quote later gives an idea how this is possible. We refer
the reader to \cite{Sch99} and \cite[Chapter 19]{Jab03} for more
details on linking methods. 

We now illustrate with an example that without the assumption that
$(x_{i},y_{i})$ is the unique pair of points satisfying $|x_{i}-y_{i}|=\diam(S_{i}\cap(\lev_{\geq l_{i}}f)\cap U_{i})$,
the conclusion in Theorem \ref{pro:opposite-directions} need not
hold.

\begin{figure}[h]
\begin{tabular}{|c|c|}
\hline 
\includegraphics[scale=0.6]{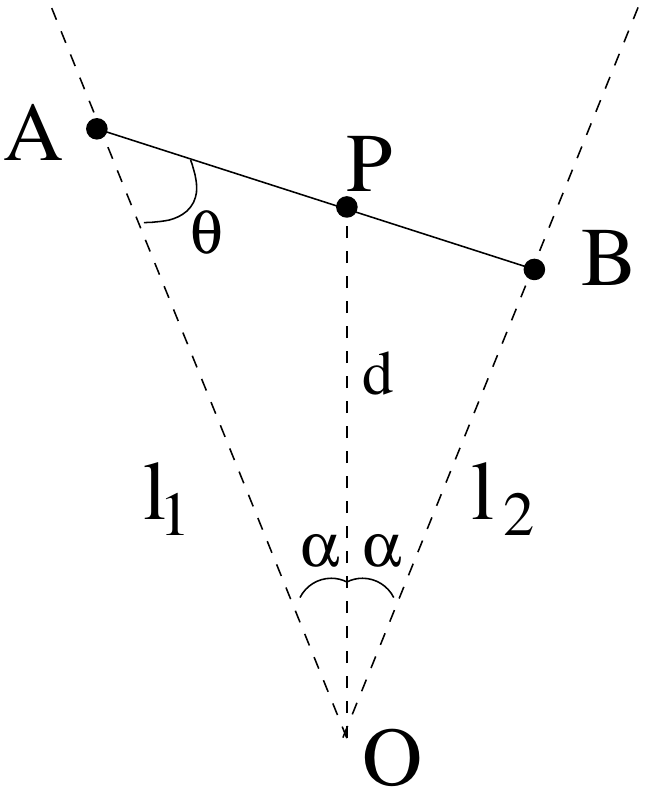} & \includegraphics[scale=0.4]{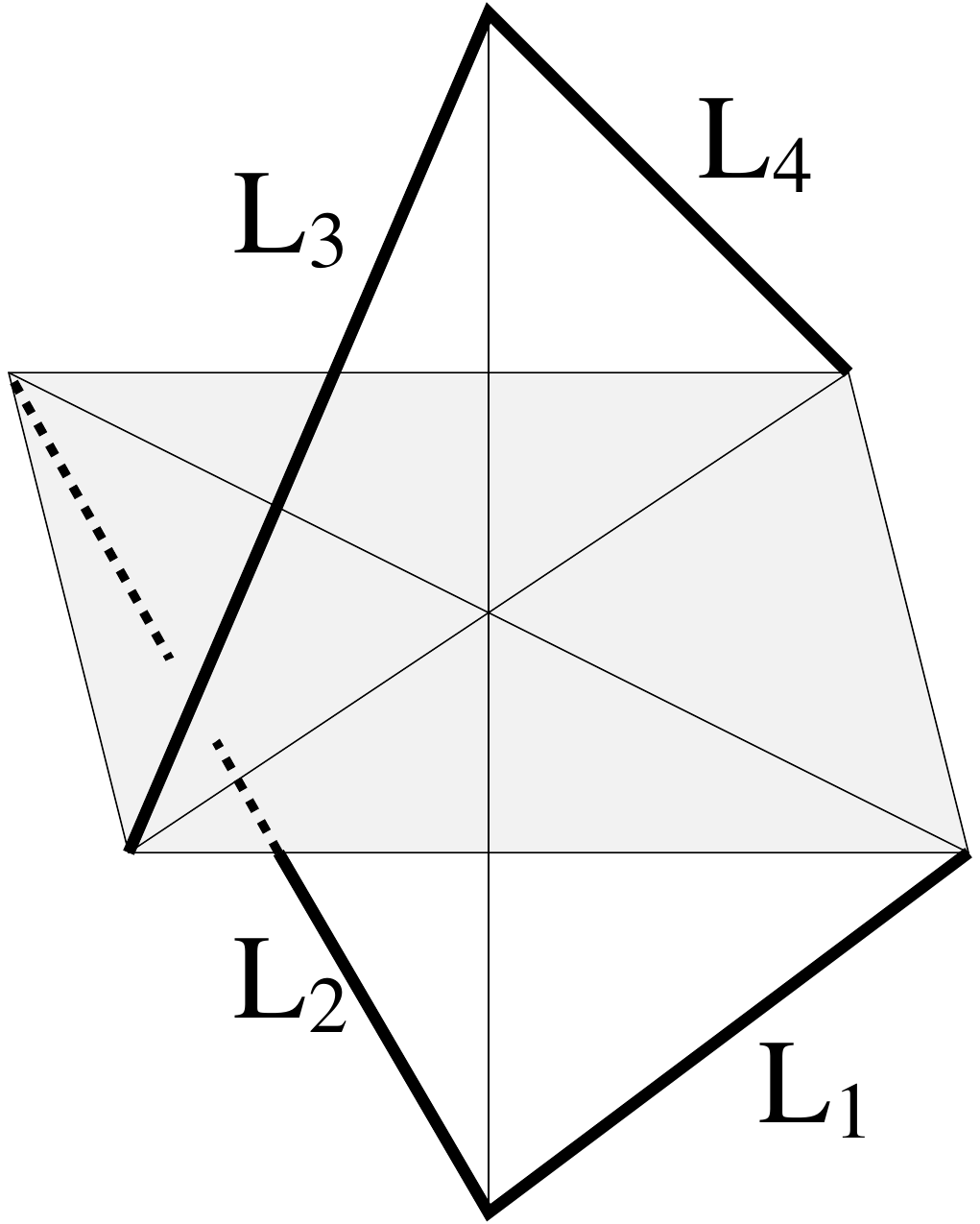}\tabularnewline
\hline
\end{tabular}

\caption{\label{fig:The-lines}The diagram on the left illustrates the setting
of Lemma \ref{lem:intersecting-rays}, while the diagram on the right
illustrates Example \ref{exa:4-lines}.}

\end{figure}

\begin{lem}
(Shortest line segments) \label{lem:intersecting-rays}Suppose lines
$l_{1}$ and $l_{2}$ intersect at the origin in $\mathbb{R}^{2}$,
and let $P$ be a point on the angle bisector as shown in the diagram
on the left of Figure \ref{fig:The-lines}. The minimum distance of
the line segment $AB$, where $A$ is a point on $l_{1}$ and $B$
is a point on $l_{2}$ and $AB$ passes through $P$, is attained
when $OAB$ is an isosceles triangle with $AB$ as its base. \end{lem}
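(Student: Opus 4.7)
The plan is to reduce the claim to a one-variable optimization by parametrizing $A$ and $B$ along their respective lines. Place $O$ at the origin with the angle bisector along the positive $x$-axis, so that $l_1$ and $l_2$ make angles $\theta$ and $-\theta$ respectively with the $x$-axis, and write $P=(p,0)$ with $p>0$. Parametrize $A=(r_1\cos\theta,r_1\sin\theta)\in l_1$ and $B=(r_2\cos\theta,-r_2\sin\theta)\in l_2$ with $r_1,r_2>0$.

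Next I would translate the constraint that $AB$ passes through $P$ into an algebraic relation. Writing $P=(1-t)A+tB$ and eliminating $t$ from the $y$-coordinate equation gives $t=r_1/(r_1+r_2)$, and substituting back into the $x$-coordinate equation yields the harmonic-mean identity
\[
\frac{2 r_1 r_2}{r_1+r_2}=\frac{p}{\cos\theta}.
\]
Setting $c:=p/\cos\theta$, $s:=r_1+r_2$, and $q:=r_1 r_2$, this becomes $q=cs/2$. The law of cosines applied to triangle $OAB$ with interior angle $2\theta$ at $O$ gives $|AB|^2=r_1^2+r_2^2-2r_1 r_2\cos(2\theta)=s^2-2q(1+\cos 2\theta)$, and substituting the constraint yields the single-variable expression
\[
|AB|^2=s^2-cs(1+\cos 2\theta).
\]

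Finally, I would minimize this quadratic in $s$ subject to the feasibility constraint that $r_1,r_2$ be real positive numbers, i.e., that $t^2-st+q=0$ have real roots. The discriminant condition $s^2-4q\geq 0$ combined with $q=cs/2$ forces $s\geq 2c$. The unconstrained minimum of $s\mapsto s^2-cs(1+\cos 2\theta)$ is at $s=c\cos^2\theta$, which lies strictly below $2c$, so the constrained minimum is achieved at the boundary $s=2c$, where the discriminant vanishes and $r_1=r_2=c$. This gives $|OA|=|OB|$, i.e., $OAB$ is isosceles with base $AB$, as claimed.

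The main subtlety is keeping track of the feasibility region for $(r_1,r_2)$; once it is recognized that the harmonic-mean constraint forces $s\geq 2c$ with equality iff $r_1=r_2$, the conclusion is immediate because the relevant quadratic is increasing on $[2c,\infty)$. A coordinate-free alternative would apply the AM--HM inequality directly to $r_1,r_2$ and note that $|AB|^2$ is a symmetric function minimized at $r_1=r_2$ among feasible pairs, but the coordinate approach is the cleanest.
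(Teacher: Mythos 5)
Your proof is correct, but it takes a genuinely different route from the paper. The paper fixes $\alpha=\measuredangle AOP$ and $d=|OP|$, uses the sine rule to write $|AB|$ as a function of the angle $\theta=\measuredangle PAO$, and then differentiates the resulting trigonometric expression, simplifying the numerator down to $\cos(\alpha+\theta)[\cos(2\alpha)+2\cos^2(\alpha+\theta)-3]$ to read off the critical point $\theta=\frac{\pi}{2}-\alpha$. You instead parametrize by $r_1=|OA|$ and $r_2=|OB|$, convert the collinearity of $A$, $P$, $B$ into the harmonic-mean relation $\tfrac{2r_1 r_2}{r_1+r_2}=\tfrac{p}{\cos\theta}$, and observe via the law of cosines that $|AB|^2=s^2-cs(1+\cos 2\theta)$ depends only on $s=r_1+r_2$ once the constraint is imposed. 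The feasibility constraint from the discriminant of $x^2-sx+q$ reduces the problem to minimizing an upward-opening parabola over $[2c,\infty)$, whose vertex at $s=c\cos^2\theta<2c$ lies outside the feasible region, so the boundary $s=2c$ (i.e., $r_1=r_2$) wins. Your argument is shorter and avoids differentiation entirely, at the cost of a small book-keeping step (tracking the discriminant/feasibility region); the paper's calculation is more pedestrian but also confirms which stationary points are degenerate. Both are complete and correct.
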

\begin{proof}
Much of this is high school trigonometry and plane geometry, but we
present full details for completeness. Let $\alpha$ be the angle
$\measuredangle AOP$, $\beta$ be the angle $\measuredangle PAO$,
and $d=|OP|$. By using the sine rule, we get \[
|AB|=d\left(\frac{\sin\alpha}{\sin\theta}+\frac{\sin\alpha}{\sin(\pi-2\alpha-\theta)}\right).\]
The problem is now reduced to finding the $\theta$ that minimizes
the value above. Continuing the arithmetic gives:\begin{eqnarray*}
d\left(\frac{\sin\alpha}{\sin\theta}+\frac{\sin\alpha}{\sin(\pi-2\alpha-\theta)}\right) & = & d\sin\alpha\left(\frac{1}{\sin\theta}+\frac{1}{\sin(2\alpha+\theta)}\right)\\
 & = & d\sin\alpha\left(\frac{\sin\theta+\sin(2\alpha+\theta)}{\sin(\theta)\sin(2\alpha+\theta)}\right)\\
 & = & d\sin\alpha\left(\frac{\sin\theta+\sin(2\alpha+\theta)}{\sin(\theta)\sin(2\alpha+\theta)}\right)\\
 & = & d\sin\alpha\left(\frac{2\sin(\alpha+\theta)\cos\alpha}{\frac{1}{2}[\cos(2\alpha)-\cos(2\alpha+2\theta)]}\right)\\
 & = & 2d\sin(2\alpha)\left(\frac{\sin(\alpha+\theta)}{\cos(2\alpha)-\cos(2\alpha+2\theta)}\right)\end{eqnarray*}
We now differentiate the $\frac{\sin(\alpha+\theta)}{\cos(2\alpha)-\cos(2\alpha+2\theta)}$
term above, which gives\begin{eqnarray*}
 &  & \frac{d}{d\theta}\left(\frac{\sin(\alpha+\theta)}{\cos(2\alpha)-\cos(2\alpha+2\theta)}\right)\\
 & = & \frac{1}{[\cos(2\alpha)-\cos(2\alpha+2\theta)]^{2}}\left[\cos(\alpha+\theta)[\cos(2\alpha)-\cos(2\alpha+2\theta)]-2\sin(2\alpha+2\theta)\sin(\alpha+\theta)\right]\end{eqnarray*}
The numerator is simplified to be:\begin{eqnarray*}
 &  & \cos(\alpha+\theta)[\cos(2\alpha)-\cos(2\alpha+2\theta)]-2\sin(2\alpha+2\theta)\sin(\alpha+\theta)\\
 & = & \cos(\alpha+\theta)[\cos(2\alpha)-2\cos^{2}(\alpha+\theta)+1]-4\sin^{2}(\alpha+\theta)\cos(\alpha+\theta)\\
 & = & \cos(\alpha+\theta)[\cos(2\alpha)-2\cos^{2}(\alpha+\theta)+1-4\sin^{2}(\alpha+\theta)]\\
 & = & \cos(\alpha+\theta)[\cos(2\alpha)-2\cos^{2}(\alpha+\theta)+4\cos^{2}(\alpha+\theta)-3]\\
 & = & \cos(\alpha+\theta)[\cos(2\alpha)+2\cos^{2}(\alpha+\theta)-3].\end{eqnarray*}
With this formula, we see that the conditions for $\frac{d}{d\theta}\left(\frac{\sin(\alpha+\theta)}{\cos(2\alpha)-\cos(2\alpha+2\theta)}\right)=0$
is to have $\cos(\alpha+\theta)=0$ or $\cos(2\alpha)+2\cos^{2}(\alpha+\theta)=3$.
The first case gives us $\theta=\frac{\pi}{2}-\alpha$, which gives
us the required conclusion. The second case requires $\alpha=0$ or
$\alpha=\pi$ and $\theta=0$ or $\theta=\pi$, which are degenerate
cases. This gives us all optimum solutions to our problem, and concludes
the proof.
\end{proof}
We now create an example in $\mathbb{R}^{3}$ that illustrates that
the omission of the condition of unique solutions need not give us
points whose gradients point in opposite directions.
\begin{example}
(Gradients need not be opposite) \label{exa:4-lines}Define the four
lines $L_{1}$ to $L_{4}$ by \begin{eqnarray*}
L_{1} & := & \{(0,0,-1)+\lambda(1,0,1)\mid\lambda\in\mathbb{R}_{+}\}\\
L_{2} & := & \{(0,0,-1)+\lambda(-1,0,1)\mid\lambda\in\mathbb{R}_{+}\}\\
L_{3} & := & \{(0,0,1)+\lambda(0,1,-1)\mid\lambda\in\mathbb{R}_{+}\}\\
L_{4} & := & \{(0,0,1)+\lambda(0,-1,-1)\mid\lambda\in\mathbb{R}_{+}\}\end{eqnarray*}
The lines $L_{1}$ and $L_{2}$ lie in the $x$-$z$ plane, while
the lines $L_{3}$ and $L_{4}$ lie in the $y$-$z$ plane. See the
diagram on the right of Figure \ref{fig:The-lines}.

Consider first the problem of finding a plane $S$ that is a minimizer
of the maximum of the distances between the points defined by the
intersections of $S$ and the $L_{i}$'s. We now show that $S$ has
to be the $x$-$y$ plane. The plane $S$ intersects the $z$ axis
at some point $(0,0,p)$. When $S$ is the $x$-$y$ plane, the maximum
distance between the points is $2$. By Lemma \ref{lem:intersecting-rays},
the distance between the points $S\cap L_{1}$ and $S\cap L_{2}$
is at least $2(1-p)$, while the distance between the the points $S\cap L_{3}$
and $S\cap L_{4}$ is at least $2(1+p)$. This tells us that the $x$-$y$
plane is optimal.

With this observation, we now construct our example. Consider the
function $f:\mathbb{R}^{3}\rightarrow\mathbb{R}$ defined by\[
f(x,y,z)=-\left(\frac{x}{1+z}+\frac{y}{1-z}\right)^{4/3}-\left(\frac{x}{1+z}-\frac{y}{1-z}\right)^{4/3}.\]
The level set $\lev_{\geq-2}f$ contains the lines $L_{1}$ to $L_{4}$.
This means that $\mbox{diam}(S\cap\lev_{\geq-2}f)\geq2$. This is
in fact an equation when $S$ is the $x$-$y$ plane, and the maximizers
being the pairs $\{\pm(1,0,0)\}$ and $\{\pm(0,1,0)\}$.

The gradient $\nabla f(x,y,z)$ is\[
\nabla f(x,y,z)=\left(\begin{array}{c}
-\frac{4}{3}\left(\frac{x}{1+z}+\frac{y}{1-z}\right)^{1/3}-\frac{4}{3}\left(\frac{x}{1+z}-\frac{y}{1-z}\right)^{1/3}\\
-\frac{4}{3}\left(\frac{x}{1+z}+\frac{y}{1-z}\right)^{1/3}+\frac{4}{3}\left(\frac{x}{1+z}-\frac{y}{1-z}\right)^{1/3}\\
-\frac{4}{3}\left(\frac{x}{1+z}+\frac{y}{1-z}\right)^{1/3}\left(-\frac{x}{(1+z)^{2}}+\frac{y}{(1-z)^{2}}\right)-\frac{4}{3}\left(\frac{x}{1+z}-\frac{y}{1-z}\right)^{1/3}\left(-\frac{x}{(1+z)^{2}}-\frac{y}{(1-z)^{2}}\right)\end{array}\right)\]
With this, we can evaluate $\nabla f$ at $\pm(1,0,0)$ and $\pm(0,1,0)$
to be \begin{eqnarray*}
\nabla f(1,0,0) & = & \left(-\frac{8}{3},0,\frac{8}{3}\right),\\
\nabla f(-1,0,0) & = & \left(\frac{8}{3},0,\frac{8}{3}\right),\\
\nabla f(0,1,0) & = & \left(0,-\frac{8}{3},-\frac{8}{3}\right),\\
\nabla f(0,-1,0) & = & \left(0,\frac{8}{3},-\frac{8}{3}\right).\end{eqnarray*}
Neither of the pairs $\{\pm(1,0,0)\}$ and $\{\pm(0,1,0)\}$ have
opposite pointing gradients, which concludes our example.
\end{example}

\section{\label{sec:Other-local-analysis}Another convergence property of
critical points}

In this section, we look at a condition on critical points similar
to Proposition \ref{pro:triples-conv} that can be helpful for numerical
methods for finding critical points. Theorem \ref{thm:crit-from-linking}
below does not seem to be easily found in the literature, and can
be seen as a local version of the mountain pass theorem. 

We prove Theorem \ref{thm:crit-from-linking} in the more general
setting of metric spaces. Such a treatment includes the case of nonsmooth
functions. We recall the following definitions in metric critical
point theory from \cite{DM94,IS96,Katriel94}.
\begin{defn}
\label{def:Deformation-critical}Let $(X,d)$ be a metric space. We
call the point $x$ \emph{Morse regular} for the function $f:X\rightarrow\mathbb{R}$
if, for some numbers $\gamma,\sigma>0$, there is a continuous function
\[
\phi:\mathbb{B}(x,\gamma)\times[0,\gamma]\rightarrow X\]
such that all points $u\in\mathbb{B}(x,\gamma)$ and $t\in[0,\gamma]$
satisfy the inequality \[
f\big(\phi(x,t)\big)\leq f(x)-\sigma t,\]
and that $\phi(\cdot,0):\mathbb{B}(x,\gamma)\to\mathbb{B}(x,\gamma)$
is the identity map. The point $x$ is \emph{Morse critical }if it
is not Morse regular. 

If for some $\phi$, there is some $\kappa>0$ such that $\phi$ also
satisfies the inequality \[
d\big(\phi(x,t),x\big)\leq\kappa t,\]
then we call $x$ \emph{deformationally regular}. The point $x$ is
\emph{deformationally critical }if it is not deformationally regular.
\end{defn}
It is a fact that if $X$ is a Banach space and $f$ is locally Lipschitz,
then deformationally critical points are Clarke critical. The following
theorem gives a strategy for identifying deformationally critical
points.
\begin{thm}
\label{thm:crit-from-linking}(Critical points from sequences of linking
sets) Let $X$ be a metric space and $f:X\to\mathbb{R}$. Suppose
there is some open set $U$ of $\bar{x}$ and sequences of sets $\{\Phi_{i}\}_{i=1}^{\infty}$
and $\{\Gamma_{i}\}_{i=1}^{\infty}$ such that 
\begin{enumerate}
\item $\Phi_{i}$ and $\partial\Gamma_{i}$ link.
\item $\Gamma_{i}$ are homeomorphic to $\mathbb{B}^{m}$ for all $i$,
and $\max_{x\in\partial\Gamma_{i}}f(x)<\inf_{x\in\Phi_{i}\cap U}f(x)$. 
\item For any open set $V$ containing $\bar{x}$, there is some $I>0$
such that $\Gamma_{i}\subset V$ for all $i>I$. 
\item $f$ is Lipschitz in $U$.
\end{enumerate}
Then $\bar{x}$ is deformationally critical.\end{thm}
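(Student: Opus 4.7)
The plan is a proof by contradiction that mirrors the pseudo-gradient deformation argument behind the mountain pass theorem, adapted to a sequence of linking pairs that collapse onto $\bar x$. Suppose $\bar x$ is deformationally regular; then Definition \ref{def:Deformation-critical} supplies constants $\gamma,\sigma,\kappa>0$ and a continuous $\phi:\mathbb{B}(\bar x,\gamma)\times[0,\gamma]\to X$ with $\phi(\cdot,0)=\mathrm{id}$, $f(\phi(u,t))\le f(u)-\sigma t$, and $d(\phi(u,t),u)\le\kappa t$. After shrinking $\gamma$ I would assume $\mathbb{B}(\bar x,(1+\kappa)\gamma)\subset U$, so the trajectories of $\phi$ remain in the region where hypothesis (4) gives Lipschitz control of $f$.

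The first substantive step is to track the scalar sequences $c_i:=\inf_{\Phi_i\cap U}f$ and $m_i:=\max_{\partial\Gamma_i}f$. Taking $h=\mathrm{id}$ in the linking hypothesis forces $\Gamma_i\cap\Phi_i\neq\emptyset$, and by hypothesis (3) we have $\Gamma_i\subset U$ for $i$ large, so this intersection lies in $\Phi_i\cap U$ and hence $c_i\le\max_{\Gamma_i}f$. The Lipschitz bound combined with $\diam(\Gamma_i)\to 0$ then drives both $m_i$ and $\max_{\Gamma_i}f$ to $f(\bar x)$, squeezing $c_i$ to the same limit; in particular $\max_{\Gamma_i}f-c_i\to 0$.

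Next, fix a small $t_0\in(0,\gamma)$ and pick an index $i$ large enough that $\Gamma_i\subset\mathbb{B}(\bar x,\gamma/2)$, $\delta_i:=c_i-m_i>0$, and $\max_{\Gamma_i}f-c_i<\sigma t_0$. Introduce the value cutoff
\[
\rho_i(x)=\max\bigl\{0,\;\min\bigl\{1,\;3(f(x)-m_i-\delta_i/3)/\delta_i\bigr\}\bigr\},
\]
which vanishes on $\{f\le m_i+\delta_i/3\}$ and equals $1$ on $\{f\ge m_i+2\delta_i/3\}$, and a spatial Urysohn cutoff $\chi:X\to[0,1]$ that is $1$ on $\mathbb{B}(\bar x,\gamma/2)$ and $0$ off $\mathbb{B}(\bar x,3\gamma/4)$. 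Define
\[
h(x)=\begin{cases}\phi\bigl(x,\rho_i(x)\chi(x)t_0\bigr),&x\in\mathbb{B}(\bar x,\gamma),\\ x,&x\notin\mathbb{B}(\bar x,\gamma).\end{cases}
\]
The two branches agree on $\partial\mathbb{B}(\bar x,\gamma)$ since $\chi$ vanishes there, making $h:X\to X$ continuous, and $\rho_i\equiv 0$ on $\{f\le m_i\}\supset\partial\Gamma_i$ gives $h|_{\partial\Gamma_i}=\mathrm{id}$. Linking then produces some $x^*\in\Gamma_i$ with $y^*:=h(x^*)\in\Phi_i$, and $d(y^*,x^*)\le\kappa t_0$ keeps $y^*\in U$, so $f(y^*)\ge c_i$.

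The contradiction comes from a three-case analysis on $\rho_i(x^*)$ showing $f(y^*)<c_i$ in every case. If $\rho_i(x^*)=0$, then $y^*=x^*$ and $f(x^*)\le m_i+\delta_i/3<c_i$. If $\rho_i(x^*)\in(0,1)$, then $f(x^*)<m_i+2\delta_i/3<c_i$ and the deformation inequality only sharpens this. If $\rho_i(x^*)=1$, then $f(y^*)\le f(x^*)-\sigma t_0\le\max_{\Gamma_i}f-\sigma t_0<c_i$ by the choice of $i$. The main obstacle is really this third case: the drop $\sigma t_0$ of a single deformation step is $i$-independent, so one must push $\max_{\Gamma_i}f-c_i$ below $\sigma t_0$, which is precisely where hypotheses (2)--(4) and the collapse $\Gamma_i\to\{\bar x\}$ have to be combined. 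The remaining work is the standard bookkeeping to splice the cutoffs into a globally continuous self-map of $X$.
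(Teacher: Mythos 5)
Your proof is correct, and it reaches the same contradiction as the paper's but through a different construction. The paper works with the tube $\Gamma_{i,t}:=\phi\bigl((\Gamma_i\times\{t\})\cup(\partial\Gamma_i\times[0,t])\bigr)$ — the image under $\phi$ of the ``top and sides'' of a cylinder over $\Gamma_i$ — and bounds $f$ on the tube directly, using that the sides project to $\partial\Gamma_i$ and the top has dropped by $\sigma t$; the essential estimate $\max_{\Gamma_i}f\le\max_{\partial\Gamma_i}f+\bar\kappa\,\diam(\Gamma_i)$ is the same quantity you call $\max_{\Gamma_i}f-c_i<\bar\kappa\,\diam(\Gamma_i)$. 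Where you diverge is in how the admissible competitor $h$ for the linking definition is produced: you build it explicitly as a globally defined, continuous self-map of $X$ via a value cutoff $\rho_i$ (to kill the flow near $\partial\Gamma_i$) and a spatial Urysohn cutoff $\chi$ (to glue with the identity outside $U$), then run a three-case analysis on the cutoff level. The paper instead leaves implicit that $\Gamma_{i,t}$ is the image of $\Gamma_i$ under a continuous map fixing $\partial\Gamma_i$ (a standard homeomorphism of $\mathbb{B}^m$ with its top-and-sides, composed with $\phi$ and extended to $X$), which in a bare metric space technically requires an extension argument; your cutoff construction sidesteps that point cleanly, at the cost of the slightly longer case bookkeeping. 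Both routes use the same three ingredients — deformational regularity, the collapse $\diam(\Gamma_i)\to 0$, and the strict gap $\max_{\partial\Gamma_i}f<\inf_{\Phi_i\cap U}f$ — and both arrive at a point of $h(\Gamma_i)\cap\Phi_i\cap U$ whose $f$-value is simultaneously $\ge\inf_{\Phi_i\cap U}f$ and strictly below it.
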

\begin{proof}
Suppose $\bar{x}$ is deformationally regular. Then there are $\gamma,\sigma,\kappa>0$
and $\phi:\mathbb{B}(\bar{x},\gamma)\times[0,\gamma]\to X$ such that
the inequalities\[
f\big(\phi(x,t)\big)\leq f(x)-\sigma t\mbox{ and }d\big(\phi(x,t),x\big)\leq\kappa t\]
hold for all points $x\in\mathbb{B}(\bar{x},\gamma)$ and $t\in[0,\gamma]$,
and $\phi(\cdot,0):\mathbb{B}(\bar{x},\gamma)\to\mathbb{B}(\bar{x},\gamma)$
is the identity map. We may reduce $\gamma$ as necessary and assume
that $U=\mathbb{B}(\bar{x},\gamma)$. 

Condition (3) implies that for any $\alpha>0$, then there is some
$I_{1}$ such that $\Gamma_{i}\subset\mathbb{B}(\bar{x},\alpha)$
for all $i>I_{1}$. Consider $\Gamma_{i,t}:=\phi((\Gamma_{i}\times\{t\})\cup(\partial\Gamma_{i}\times[0,t]))$.
Provided $0<t<\frac{\gamma-\alpha}{\kappa}$, we have $\Gamma_{i,t}\subset U$.

Since $f$ is Lipschitz in $U$, let $\bar{\kappa}$ be the modulus
of Lipschitz continuity in $U$. We have $\max_{x\in\Gamma_{i}}f(x)\leq\max_{x\in\partial\Gamma_{i}}f(x)+\bar{\kappa}\diam(\Gamma_{i})$.
Also,\begin{eqnarray*}
\max_{x\in\Gamma_{i.t}}f(x) & \leq & \max\left(\max_{x\in\partial\Gamma_{i}}f(x)+\bar{\kappa}\diam(\Gamma_{i})-\sigma t,\max_{x\in\partial\Gamma_{i}}f(x)\right).\end{eqnarray*}

By condition (3), there is some $I_{2}>0$ such that if $i>I_{2}$,
then $\diam(\Gamma_{i})<\frac{\sigma t}{\bar{\kappa}}$. So for $i>\max(I_{1},I_{2})$,
we have \[
\max_{x\in\Gamma_{i,t}}f(x)=\max_{x\in\partial\Gamma_{i}}f(x)<\inf_{x\in\Phi_{i}\cap U}f(x).\]

However, the fact that $\partial\Gamma_{i}$ and $\Phi_{i}$ link
implies that $\Gamma_{i,t}$ and $\Phi_{i}$ must intersect, and since
$\Gamma_{i,t}\subset U$, $\Gamma_{i,t}$ and $\Phi_{i}\cap U$ must
intersect. This is a contradiction, so $\bar{x}$ is deformationally
critical.
\end{proof}
It is reasonable to choose $\Gamma_{i}$ to be a simplex (that is,
a convex hull of $m+1$ points) and $\Phi_{i}$ to be an affine space.
If the sequence of sets $\{\Gamma_{i}\}_{i=1}^{\infty}$ converges
to the single point $\bar{x}$ and $f$ is $\mathcal{C}^{2}$ there,
a quadratic approximation of $f$ using only the knowledge of the
values of $f$ and $\nabla f$ on the vertices of the simplex would
be good approximation of $f$ on the simplex. We outline our strategy
below.
\begin{algorithm}
\label{alg:quad-approx}(Obtaining unknowns in quadratic) Let $h:\mathbb{R}^{m}\to\mathbb{R}$
be defined by $h(x)=\frac{1}{2}x^{T}Ax+b^{T}x+c$, and let $p_{1},\dots,p_{m+1}$
be $m+1$ points in $\mathbb{R}^{m}$. Suppose that the values of
$h(p_{i})$ and $\nabla h(p_{i})$ are known for all $i=1,\dots,m+1$.
We seek to obtain the values of $A$, $b$ and $c$.
\begin{enumerate}
\item Let $P\in\mathbb{R}^{m\times m}$ be the matrix such that the $i$th
column is $p_{i+1}-p_{1}$, and let $D\in\mathbb{R}^{m\times m}$
be the matrix such that the $i$th column is $\nabla h(p_{i+1})-\nabla h(p_{1})$.
Calculate $A$ with $A=DP^{-1}$.
\item Calculate $b$ with $b=\nabla h(p_{1})-Ap_{1}$.
\item Calculate $c$ with $c=h(p_{1})-\frac{1}{2}p_{1}^{T}Ap_{1}-b^{T}p_{1}$.
\end{enumerate}
\end{algorithm}
If $h$ is $\mathcal{C}^{2}$ instead of being a quadratic, then the
procedure in Algorithm \ref{alg:quad-approx} can be used to approximate
the values of $h$ on a simplex. 

In Lemma \ref{lem:quad-est} below, given $m+1$ points in $\mathbb{R}^{n}$,
we need to approximate a quadratic function on $\Delta$ as a subset
of $\mathbb{R}^{n}$. Even though $m<n$, the procedure to obtain
a quadratic estimate of $f$ in $\Delta$ is a straightforward extension
of Algorithm \ref{alg:quad-approx}. 
\begin{lem}
\label{lem:quad-est}(Quadratic estimate on simplex) Let $f:\mathbb{R}^{n}\to\mathbb{R}$
be $\mathcal{C}^{2}$ and $\bar{x}\in\mathbb{R}^{n}$. Let $p_{1},\dots,p_{m+1}$
be points close to $\bar{x}$. Suppose the matrix $P\in\mathbb{R}^{n\times m}$,
whose $i$th column is $p_{i+1}-p_{1}$, has full column rank. Let
$f_{e}:\mathbb{R}^{n}\cap\Delta\to\mathbb{R}$ be defined as the quadratic
function obtained using $f(p_{i})$ and $\nabla f(p_{i})$ for $i=1,\dots,m+1$
with Algorithm \ref{alg:quad-approx}. For any $\epsilon>0$, there
is some $\delta>0$ such that if $p_{1},\dots,p_{m+1}\in\mathbb{B}(\bar{x},\delta)$,
then \[
|f_{e}(x)-f(x)|<\frac{1}{2}\diam(\Delta)^{2}\epsilon(1+\kappa\|P\|\|P^{\dagger}\|)\mbox{ for all }x\in\Delta,\]
where $\|\cdot\|$ stands for the matrix 2-norm, $P^{\dagger}$ is
the pseudoinverse of $P$, and $\kappa$ is some constant dependent
only on $n$ and $m$.\end{lem}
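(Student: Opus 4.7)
The plan is to Taylor-expand $f$ to second order about $p_1$ and match term-by-term with $f_e$, exploiting that the displacement $v:=x-p_1$ lies in $\Range(P)$ whenever $x\in\Delta$. By Taylor's theorem,
\[
f(x)=f(p_1)+\nabla f(p_1)^{T}v+\frac{1}{2}v^{T}\nabla^{2}f(\xi_x)v
\]
for some $\xi_x\in[p_1,x]$. Using the definitions of $b$ and $c$ in the extension of Algorithm \ref{alg:quad-approx}, one verifies that the quadratic $f_e(x)=\frac{1}{2}x^{T}Ax+b^{T}x+c$ with $A=DP^{\dagger}$ admits the expansion $f_e(x)=f(p_1)+\nabla f(p_1)^{T}v+\frac{1}{2}v^{T}Av$ once $v$ is restricted to $\Range(P)$. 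Consequently
\[
|f(x)-f_e(x)|=\tfrac{1}{2}\bigl|v^{T}(\nabla^{2}f(\xi_x)-A)v\bigr|,
\]
and the problem reduces to controlling $A$ on $\Range(P)$.

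The key quantitative step is to bound $A-\nabla^{2}f(\bar{x})\Pi$, where $\Pi:=PP^{\dagger}$ is orthogonal projection onto $\Range(P)$. Writing each column of $D$ via the fundamental theorem of calculus,
\[
D_i=\int_{0}^{1}\nabla^{2}f\bigl(p_1+s(p_{i+1}-p_1)\bigr)(p_{i+1}-p_1)\,ds,
\]
and invoking continuity of $\nabla^{2}f$ at $\bar{x}$, we may choose $\delta>0$ so that $p_1,\dots,p_{m+1}\in\mathbb{B}(\bar{x},\delta)$ forces $\|\nabla^{2}f(\cdot)-\nabla^{2}f(\bar{x})\|<\epsilon$ along every segment $[p_1,p_{i+1}]$. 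This gives $\|D_i-\nabla^{2}f(\bar{x})(p_{i+1}-p_1)\|\le\epsilon\|P\|$, and a column-by-column Frobenius-to-spectral-norm comparison then yields $\|D-\nabla^{2}f(\bar{x})P\|\le\kappa\epsilon\|P\|$ for a constant $\kappa$ depending only on $n,m$ (one may take $\kappa=\sqrt{m}$). Right-multiplying by $P^{\dagger}$ produces the desired estimate $\|A-\nabla^{2}f(\bar{x})\Pi\|\le\kappa\epsilon\|P\|\|P^{\dagger}\|$.

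Finally, $v\in\Range(P)$ implies $\Pi v=v$, so $v^{T}\nabla^{2}f(\bar{x})\Pi v=v^{T}\nabla^{2}f(\bar{x})v$. Combining this with $\|\nabla^{2}f(\xi_x)-\nabla^{2}f(\bar{x})\|<\epsilon$ (again by shrinking $\delta$, since $\xi_x\in\mathbb{B}(\bar{x},\delta)$ by convexity) and using $\|v\|\le\diam(\Delta)$ gives
\[
|f(x)-f_e(x)|\le\frac{1}{2}\|v\|^{2}\bigl(\epsilon+\kappa\epsilon\|P\|\|P^{\dagger}\|\bigr)\le\frac{1}{2}\diam(\Delta)^{2}\epsilon\bigl(1+\kappa\|P\|\|P^{\dagger}\|\bigr),
\]
as required. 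The main obstacle is the very first reduction: because $m<n$ the pseudoinverse choice $A=DP^{\dagger}$ is not symmetric in general, so a careful algebraic check using the interpolation identities $f_e(p_i)=f(p_i)$ together with the explicit formulas for $b$ and $c$ is needed to confirm that the asymmetric part of $A$ does not generate an uncancelled first-order term on $\Range(P)$. After centering coordinates at $\bar{x}$ (so that $\|p_1\|\le\delta$), any residual contribution of the form $\frac{1}{2}v^{T}(A-A^{T})p_1$ is of order $\diam(\Delta)\cdot\delta\cdot\|P\|\|P^{\dagger}\|$ and can be absorbed into the stated bound by choosing $\delta$ small enough relative to $\epsilon$; the remaining computations are routine perturbation estimates.
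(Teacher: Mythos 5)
Your approach is essentially the same as the paper's.  The paper writes $P = QR$ and works with $DR^{-1}Q^{T}$; since $P^{\dagger} = R^{-1}Q^{T}$ for a full-column-rank $P$, your $A = DP^{\dagger}$ and the bound $\|P^{\dagger}\| = \|R^{-1}\|$ are identical to the paper's quantities, and both proofs run through the same sequence of steps: a fundamental-theorem-of-calculus (or Taylor) representation, the estimate $\|D - \nabla^{2}f(\bar{x})P\| \le \kappa\epsilon\|P\|$, a right-multiplication by the pseudoinverse to pass to $\operatorname{Range}(P)$, and an integration of the resulting second-order bound over the segment from $p_{1}$ to $x$.

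The one place where you go beyond the paper is the final paragraph, where you observe that $A = DP^{\dagger}$ need not be symmetric and hence that the literal output of Algorithm \ref{alg:quad-approx} may not coincide with the quadratic you Taylor-matched at $p_{1}$.  This is a genuine observation that the paper glosses over, but your proposed fix is not quite right.  The extra term $\tfrac{1}{2}v^{T}(A - A^{T})p_{1}$ has a factor $\|A - A^{T}\|$ that you estimate as $O(\|P\|\|P^{\dagger}\|)$; however, $\|A - A^{T}\|$ does not tend to zero with $\delta$, because $A - A^{T}$ is approximately $\nabla^{2}f(\bar{x})\Pi - \Pi\nabla^{2}f(\bar{x})$, which is of order one unless $\operatorname{Range}(P)$ happens to be an invariant subspace of the Hessian.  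Consequently the residual is of order $\diam(\Delta)\cdot\|p_{1}-\bar{x}\|$, and since $\|p_{1}-\bar{x}\|$ is allowed to be as large as $\delta$ while $\diam(\Delta)$ can be arbitrarily small, this term is not dominated by $\diam(\Delta)^{2}\epsilon$ and cannot be absorbed by shrinking $\delta$.  The correct resolution, and the one the paper's proof adopts implicitly, is simply to \emph{define} the extension of Algorithm \ref{alg:quad-approx} to the non-quadratic case so that
\[
f_{e}(x') := f(p_{1}) + \nabla f(p_{1})^{T}(x'-p_{1}) + \tfrac{1}{2}(x'-p_{1})^{T}\,(DP^{\dagger})\,(x'-p_{1}),
\]
i.e.\ as the quadratic centred at $p_{1}$ with the symmetrised Hessian $\tfrac{1}{2}(A+A^{T})$, which agrees with Algorithm \ref{alg:quad-approx} exactly when $f$ is quadratic and removes the spurious linear term entirely.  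With that definition in place your first two paragraphs constitute a correct proof, matching the paper's argument step by step.
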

\begin{proof}
The first step of this proof is to show that step 1 of Algorithm \ref{alg:quad-approx}
gives a matrix in $\mathbb{R}^{n\times n}$ which is a good approximation
of how $A=\nabla^{2}f(\bar{x})$ acts on the lineality space of the
affine hull of $\Delta$. Since $f$ is $\mathcal{C}^{2}$, for any
$\epsilon>0$, there exists $\delta>0$ such that $|\nabla f(x)-\nabla f(x^{\prime})-A(x-x^{\prime})|<\epsilon|x-x^{\prime}|$
for all $x,x^{\prime}\in\mathbb{B}(\bar{x},\delta)$. Thus, there
is some $\kappa>0$ depending only on $m$ and $n$ such that if $p_{1},\dots,p_{m+1}\in\mathbb{B}(\bar{x},\delta)$,
then \begin{equation}
\|D-AP\|<\kappa\epsilon\|P\|.\label{eq:|D-AP|}\end{equation}

Let $P=QR$, where $Q\in\mathbb{R}^{n\times m}$ has orthonormal columns
and $R\in\mathbb{R}^{m\times m}$, be a QR decomposition of $P$.
For any $v\in\mathbb{R}^{n}$ in the range of $P$, or equivalently,
$v=Qv^{\prime}$ for some $v^{\prime}\in\mathbb{R}^{m}$, we want
to show that $\|Av-DR^{-1}Q^{T}v\|$ is small. We note that $|v|=|v^{\prime}|$,
and we have the following calculation. \begin{eqnarray*}
\|Av-DR^{-1}Q^{T}v\| & = & \|AQv^{\prime}-DR^{-1}Q^{T}Qv^{\prime}\|\\
 & = & \|AQv^{\prime}-DR^{-1}v^{\prime}\|\\
 & \leq & \|AQ-DR^{-1}\||v^{\prime}|\\
 & \leq & \|AQR-D\|\|R^{-1}\||v|\\
 & = & \|D-AP\|\|R^{-1}\||v|\\
 & \leq & \kappa\epsilon\|P\|\|R^{-1}\||v|.\end{eqnarray*}
Next, for $x,x^{\prime}\in\mathbb{B}(\bar{x},\delta)$, let $d=\unitt(x^{\prime}-x)$.
Then\begin{eqnarray*}
f(x^{\prime})-f(x) & = & \int_{0}^{|x^{\prime}-x|}\nabla f(x+sd)^{T}d\,\mathbf{d}s\\
 & = & \int_{0}^{|x^{\prime}-x|}\int_{0}^{s}d^{T}\nabla^{2}f(x+td)d\,\mathbf{d}t+\nabla f(x)^{T}d\,\mathbf{d}s.\end{eqnarray*}
Since $f$ is $\mathcal{C}^{2}$, we may reduce $\delta$ if necessary
so that $\|A-\nabla^{2}f(x+td)\|<\epsilon$ for all $0\leq t\leq|x^{\prime}-x|$.
This tells us that \begin{eqnarray*}
|d^{T}(DR^{-1}Q^{T})d-d^{T}\nabla^{2}f(x)d| & \leq & |d|\|DR^{-1}Q^{T}d-\nabla^{2}f(x)d\|\\
 & \leq & \|DR^{-1}Q^{T}d-Ad\|+\|Ad-\nabla^{2}f(x)d\|\\
 & \leq & \kappa\epsilon\|P\|\|R^{-1}\||d|+\|A-\nabla^{2}f(x)\|\\
 & \leq & \epsilon(1+\kappa\|P\|\|R^{-1}\|).\end{eqnarray*}
We have \begin{eqnarray*}
 &  & \int_{0}^{|x^{\prime}-x|}\int_{0}^{s}d^{T}(DR^{-1}Q^{T})d\,\mathbf{d}t+\nabla f(x)^{T}d\,\mathbf{d}s\\
 & = & \nabla f(x)^{T}(x^{\prime}-x)+\int_{0}^{|x^{\prime}-x|}d^{T}(DR^{-1}Q^{T})ds\,\mathbf{d}s\\
 & = & \nabla f(x)^{T}(x^{\prime}-x)+\frac{|x^{\prime}-x|^{2}}{2}d^{T}(DR^{-1}Q^{T})d\\
 & = & \nabla f(x)^{T}(x^{\prime}-x)+\frac{1}{2}(x^{\prime}-x)^{T}(DR^{-1}Q^{T})(x^{\prime}-x).\end{eqnarray*}
Continuing with the arithmetic earlier, we obtain\begin{eqnarray*}
 &  & \left|f(x^{\prime})-f(x)-\left(\nabla f(x)^{T}(x^{\prime}-x)+\frac{1}{2}(x^{\prime}-x)^{T}(DR^{-1}Q^{T})(x^{\prime}-x)\right)\right|\\
 & \leq & \int_{0}^{|x^{\prime}-x|}\int_{0}^{s}\epsilon(1+\kappa\|P\|\|R^{-1}\|)s\,\mathbf{d}t\mathbf{d}s\\
 & = & \frac{1}{2}|x^{\prime}-x|^{2}\epsilon(1+\kappa\|P\|\|R^{-1}\|).\end{eqnarray*}
Let $x=p_{1}$ and $x^{\prime}$ be any point in $\Delta$. Define
$f_{e}(x^{\prime})$ by \[
f_{e}(x^{\prime})=f(x)+\nabla f(x)^{T}(x^{\prime}-x)+\frac{1}{2}(x^{\prime}-x)^{T}(DR^{-1}Q^{T})(x^{\prime}-x),\]
which is the quadratic function obtained using Algorithm \ref{alg:quad-approx}.
We have \begin{eqnarray*}
|f_{e}(x^{\prime})-f(x^{\prime})| & \leq & \frac{1}{2}|x^{\prime}-x|^{2}\epsilon(1+\kappa\|P\|\|R^{-1}\|)\\
 & \leq & \frac{1}{2}\diam(\Delta)^{2}\epsilon(1+\kappa\|P\|\|R^{-1}\|),\end{eqnarray*}
which gives what we need.
\end{proof}
In the statement of Lemma \ref{lem:quad-est}, we chose the domain
of $f$ to be $\mathbb{R}^{n}$ so that the inequality \eqref{eq:|D-AP|}
follows from the equivalence of finite dimensional norms. Next, the
accuracy of the computed values of $\nabla f(p_{i+1})-\nabla f(p_{1})$
might be poor, which makes the quadratic approximation strategy ineffective
once we are too close to the critical point $\bar{x}$. We remark
on how we can overcome this problem by exploiting concavity.
\begin{rem}
(Exploiting concavity) The lineality space of the affine hull of $\Delta$
may span the eigenspaces of the $m$ negative eigenvalues of $\nabla^{2}f(\bar{x})$
once we are close to the critical point $\bar{x}$. This can be checked
by calculating the Hessian as was done earlier. If this is the case,
$f$ would be concave in $\Delta$ when $p_{1},\dots,p_{m+1}$ are
sufficiently close to $\bar{x}$. The estimate $f(x)\leq f(p_{i})+\nabla f(p_{i})^{T}(x-p_{i})$
would hold for all $x\in\Delta$ and $1\leq i\leq m+1$, which can
give a sufficiently good estimate of $\max_{x\in\partial\Delta}f(x)$
through linear programming.
\end{rem}

\section{Fast local convergence\label{sec:Fast-local-convergence}}

In this section, we discuss how we can find good lower bounds that
allow us to achieve better convergence if $f$ is $\mathcal{C}^{2}$
and $X=\mathbb{R}^{n}$. Our method extends the local superlinearly
convergent method in \cite{LP08} for finding smooth critical points
of mountain pass type when $X=\mathbb{R}^{n}$.

Let us recall the multidimensional mountain pass theorem due to Rabinowitz
\cite{R77}.
\begin{thm}
(Multidimensional mountain pass theorem) \cite{R77}\label{thm:Rabinowitz-77}
Let $X=Y\oplus Z$ be a Banach space with $Z$ closed in $X$ and
$\dim(Y)<\infty$. For $\rho>0$ define\[
\mathcal{M}:=\{u\in Y\mid\|u\|\leq\rho\},\quad\mathcal{M}_{0}:=\{u\in Y\mid\|u\|=\rho\}.\]
Let $f:X\to\mathbb{R}$ be $\mathcal{C}^{1}$, and \[
b:=\inf_{u\in Z}f(u)>a:=\max_{u\in\mathcal{M}_{0}}f(u).\]
If $f$ satisfies the Palais Smale condition and \[
c:=\inf_{\gamma\in\Gamma}\max_{u\in\mathcal{M}}f\big(\gamma(u)\big)\mbox{ where }\Gamma:=\left\{ \gamma:\mathcal{M}\to X\mbox{ is continuous}\mid\gamma|_{\mathcal{M}_{0}}=id\right\} ,\]
then $c$ is a critical value of $f$. 
\end{thm}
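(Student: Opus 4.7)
The plan is the classical minimax proof: first establish $c \geq b > a$ by a linking argument using Brouwer degree, then contradict the definition of $c$ by combining the Palais--Smale condition with a standard deformation lemma that produces a continuous flow lowering $f$ near level $c$.

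For the linking step, let $P \colon X \to Y$ be the bounded linear projection onto $Y$ along $Z$ (well defined and continuous because $Z$ is closed and $\dim Y < \infty$). For any $\gamma \in \Gamma$, the map $P \circ \gamma \colon \mathcal{M} \to Y$ is continuous and agrees with the identity on $\mathcal{M}_0$. Consider the straight-line homotopy $H(u,t) := (1-t)u + t\,P(\gamma(u))$ in $Y$; on $\mathcal{M}_0$ this equals $u$, so $\|H(u,t)\| = \rho > 0$ throughout. By homotopy invariance of Brouwer degree,
\[
\deg\bigl(P \circ \gamma,\ \intr(\mathcal{M}),\ \mathbf{0}\bigr) \;=\; \deg\bigl(\mathrm{id},\ \intr(\mathcal{M}),\ \mathbf{0}\bigr) \;=\; 1,
\]
so there exists $u^{*} \in \intr(\mathcal{M})$ with $P(\gamma(u^{*})) = \mathbf{0}$, i.e.\ $\gamma(u^{*}) \in Z$. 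Hence $\max_{u \in \mathcal{M}} f(\gamma(u)) \geq f(\gamma(u^{*})) \geq b$, and taking the infimum over $\gamma$ yields $c \geq b > a$.

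Now suppose toward a contradiction that $c$ is not a critical value. The Palais--Smale condition then yields constants $\bar{\epsilon},\delta > 0$ such that $\|\nabla f(x)\| \geq \delta$ whenever $|f(x) - c| \leq \bar{\epsilon}$. Pick $\epsilon \in (0,\bar{\epsilon})$ small enough that $c - 2\epsilon > a$. The standard deformation lemma, built from a pseudo-gradient vector field and its flow, produces a continuous $\eta \colon X \to X$ such that $\eta$ is the identity on $\lev_{\leq c - 2\epsilon} f$ and $\eta(\lev_{\leq c + \epsilon} f) \subseteq \lev_{\leq c - \epsilon} f$. Since $\mathcal{M}_0 \subseteq \lev_{\leq a} f \subseteq \lev_{\leq c - 2\epsilon} f$, we have $\eta|_{\mathcal{M}_0} = \mathrm{id}$, so choosing $\gamma \in \Gamma$ with $\max_{u \in \mathcal{M}} f(\gamma(u)) \leq c + \epsilon$ and setting $\tilde\gamma := \eta \circ \gamma$ yields $\tilde\gamma \in \Gamma$ together with $\max_{u \in \mathcal{M}} f(\tilde\gamma(u)) \leq c - \epsilon$, contradicting the definition of $c$.

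The main obstacle is the linking step: we need the intersection $\gamma(\mathcal{M}) \cap Z \neq \emptyset$ to hold for \emph{every} continuous $\gamma \in \Gamma$, and this essentially uses $\dim Y < \infty$ together with the closedness of $Z$ (so that $P$ is continuous and the finite-dimensional Brouwer degree applies). The deformation lemma itself is standard in the Banach-space setting, but needs the pseudo-gradient substitute for $\nabla f$ since $\nabla f$ need not be locally Lipschitz; the Palais--Smale condition is precisely what guarantees the flow can be continued long enough to drop the sublevel set by the required amount.
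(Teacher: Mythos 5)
The paper states this theorem as a citation to Rabinowitz \cite{R77} and does not prove it, so there is no in-paper argument to compare against; the question is simply whether your reconstruction is a valid proof, and it is. It is the standard textbook argument. The Brouwer-degree homotopy establishes the linking property $\gamma(\mathcal{M})\cap Z\neq\emptyset$ for every admissible $\gamma$, giving $c\geq b>a$; then the quantitative deformation lemma (built from a pseudo-gradient flow, with the Palais--Smale condition providing the uniform lower bound $\|\nabla f\|\geq\delta$ near level $c$) pushes a nearly optimal $\gamma$ below level $c-\epsilon$ while fixing it on $\mathcal{M}_0$, contradicting the definition of $c$ as an infimum. One step you invoke but do not justify is the boundedness of the projection $P:X\to Y$ along $Z$: this holds because $\dim Y<\infty$ and $Z$ is closed (the quotient map $X\to X/Z$ restricted to $Y$ is a linear bijection between finite-dimensional spaces, hence a homeomorphism, and $P$ is its inverse composed with the quotient), and it is precisely what makes the finite-dimensional degree argument legitimate. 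You should also note that the $\epsilon$ chosen in the deformation step must satisfy both $c-2\epsilon>a$ and the quantitative smallness required by the deformation lemma relative to $\bar\epsilon$ and $\delta$, but since all three quantities are fixed before $\epsilon$ is chosen, this is harmless. Overall the proposal matches the classical sources such as Rabinowitz's CBMS lectures and \cite{Str08}.
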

 For the case when $X=\mathbb{R}^{3}$, we have an illustration
in Figure \ref{fig:multi-MPT-fig} of the case $f:\mathbb{R}^{3}\rightarrow\mathbb{R}$
defined by $f(x)=x_{3}^{2}-x_{1}^{2}-x_{2}^{2}$. The critical point
$\mathbf{0}$ has critical value $0$. Choose $Y$ to be $\mathbb{R}^{2}\times\{0\}$
and $Z$ to be $\{\mathbf{0}\}\times\mathbb{R}$. The union of the
two blue cones is the level set $\mbox{lev}_{=0}f:=f^{-1}(0)$, while
the bold red ring denotes $\mathcal{M}_{0}$ and the red disc denotes
a possible image of $\mathcal{M}$ under $\gamma$. 

\begin{figure}

\includegraphics[scale=0.5]{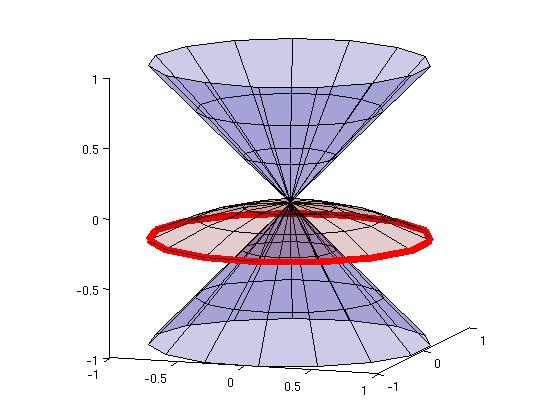}\caption{\label{fig:multi-MPT-fig}Illustration of the multidimensional mountain
pass theorem}

\end{figure}

It seems intuitively clear that $\gamma(\mathcal{M})$ has to intersect
the vertical axis. This is indeed the case, since $\mathcal{M}_{0}$
and $Z$ link. (See for example \cite[Example II.8.2]{Str08}.)

With this observation, we easily see that $\max_{u\in\mathcal{M}}f(\gamma(u))\geq\inf_{z\in Z}f(z)$.
Thus the critical value $c=\inf_{\gamma\in\Gamma}\max_{u\in\mathcal{M}}f(\gamma(u))$
from Theorem \ref{thm:Rabinowitz-77} is bounded from below by $\inf_{z\in Z}f(z)$.
This gives a lower bound for the critical value. In the mountain pass
case when $m=1$, the set $\mathcal{M}_{0}$ consists of two points,
and the space $Z$ separates the two points in $\mathcal{M}_{0}$
so that any path connecting the two points in $\mathcal{M}_{0}$ must
intersect $Z$. 

A first try for a fast locally convergent algorithm is as follows:
\begin{algorithm}
\label{alg:first-local-algorithm}A first try for a fast locally convergent
algorithm to find saddle points of Morse index $m$ for $f:X\to\mathbb{R}$.\end{algorithm}
\begin{enumerate}
\item Set the iteration count $i$ to $0$, and let $l_{i}$ be a lower
bound of the critical value. 
\item Find $x_{i}$ and $y_{i}$, where $(S_{i},x_{i},y_{i})$ is an optimizing
triple of\begin{equation}
\min_{S\in\mathcal{S}}\max_{x,y\in S\cap(\scriptsize\lev_{\geq l_{i}}f)\cap U_{i}}|x-y|,\label{eq:key-exp2}\end{equation}
where $U_{i}$ is an open set. Here $\mathcal{S}$ is the set of $m$-dimensional
affine subspaces of $\mathbb{R}^{n}$ intersecting $U_{i}$. (The
difference between this formula and \eqref{eq:keyexp1} is that we
take level sets of level $l_{i}$ instead of $\frac{1}{2}(l_{i}+u_{i})$.)

\item For an optimal solution $(S_{i},x_{i},y_{i})$, let $l_{i+1}$ be
the lower bound of $f$ on the $(n-m)$-dimensional affine space passing
through $z_{i}:=\frac{1}{2}(x_{i}+y_{i})$ whose lineality space is
orthogonal to the lineality space of $S_{i}$ .
\item Increase $i$ and go back to step 2.
\end{enumerate}
While Algorithm \ref{alg:first-local-algorithm} as stated works fine
for the case $m=1$ to find critical points of mountain pass type,
the $l_{i}$'s calculated in this manner need not increase monotonically
to the critical value when $m>1$. We first present a lemma on the
min-max problem \eqref{eq:key-exp2} for the case of a quadratic.
\begin{lem}
(Analysis on exact quadratic) \label{lem:quadratic-min-max} Consider
$f:\mathbb{R}^{n}\rightarrow\mathbb{R}$ defined by $f(x)=\sum_{j=1}^{n}a_{j}x_{j}^{2}$,
where $a_{i}$ are in decreasing order, with $a_{j}>0$ for $1\leq j\leq n-m$
and $a_{j}<0$ for $n-m+1\leq j\leq n$. The function $f$ has one
critical point $\mathbf{0}$, and $f(\mathbf{0})=0$. Given $l<0$,
an optimizing triple $(\bar{S},\bar{x},\bar{y})$ of the problem \begin{equation}
\min_{S\in\mathcal{S}}\max_{x,y\in S\cap\scriptsize\lev_{\geq l}f}|x-y|,\label{eq:quadratic-min-max}\end{equation}
where $\mathcal{S}$ is the set of affine spaces of dimension $m$,
satisfies \[
\bar{x}=\left(0,0,\dots,0,\pm\sqrt{\frac{l}{a_{n-m+1}}},0,\dots,0\right),\]
where the nonzero term is in the $(n-m+1)$th position, and $\bar{y}=-\bar{x}$. \end{lem}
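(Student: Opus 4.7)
The plan is to analyze the inner maximum for each candidate affine subspace $S$ in closed form, and then see that the outer minimum is governed by two standard facts from linear algebra: Cauchy (Poincar\'e) interlacing and Haynsworth's inertia additivity formula. Write $S = v_0 + V$ with $V$ the lineality space and $v_0 \in V^{\perp}$, and put $A = \mathrm{diag}(a_1,\dots,a_n)$, so that $f(x) = x^T A x$.

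The first step is to reduce to the case $A|_V \prec 0$. If $A|_V$ has any nonnegative eigenvalue, then choosing a suitable ray in $V$ drives $f$ to $+\infty$, forcing $S\cap\lev_{\geq l} f$ to be unbounded; therefore any optimizer must satisfy $A|_{\bar V} \prec 0$. Once $A|_V \prec 0$, completing the square in $v\in V$ shows that $v\mapsto f(v_0+v)$ attains a unique maximum $M := \max_{x \in S} f(x)$ at some center $c \in S$, and
\[
S \cap \lev_{\geq l} f \;=\; c + \{v \in V : v^T(-A|_V) v \leq M-l\},
\]
an ellipsoid of largest diameter $2\sqrt{(M-l)/|\mu_1|}$, where $\mu_1 := \lambda_{\max}(A|_V) < 0$, with the longest axis along a $\mu_1$-eigenvector of $A|_V$.

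The proof then reduces to two inequalities. First, Cauchy interlacing gives $\mu_1 \geq \lambda_{n-m+1}(A) = a_{n-m+1}$, hence $|\mu_1| \leq |a_{n-m+1}|$. Second, in the orthonormal block decomposition of $A$ along $V \oplus V^{\perp}$, a direct calculation with $v_0 \in V^\perp$ yields $M = v_0^T (A/A|_V) v_0$, where $A/A|_V$ denotes the Schur complement. Haynsworth's inertia additivity formula gives $\mathrm{sig}(A) = \mathrm{sig}(A|_V) + \mathrm{sig}(A/A|_V)$; since $\mathrm{sig}(A) = (n-m,m)$ and $\mathrm{sig}(A|_V) = (0,m)$, the Schur complement is positive definite, so $M \geq 0$ with equality iff $v_0 = 0$. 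Combining these,
\[
\diam(S \cap \lev_{\geq l} f) = 2\sqrt{(M-l)/|\mu_1|} \geq 2\sqrt{-l/|a_{n-m+1}|} = 2\sqrt{l/a_{n-m+1}},
\]
which establishes the lower bound.

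For equality, the two inequalities force $v_0 = 0$ (so $\bar S = V$ passes through the origin) and $|\mu_1| = |a_{n-m+1}|$, which by the equality case of Cauchy interlacing selects $V = \mathrm{span}\{e_{n-m+1},\dots,e_n\}$. On this $V$, $\bar S \cap \lev_{\geq l} f = \{v : \sum_{j=n-m+1}^n |a_j|v_j^2 \leq -l\}$ is an ellipsoid whose longest semi-axis $\sqrt{l/a_{n-m+1}}$ lies along $\pm e_{n-m+1}$, giving precisely the stated $\bar x$ and $\bar y$. The main obstacle is the positivity $M \geq 0$: without it the combined bound fails (note that $v_0^T A v_0$ alone can be negative, since $A|_{V^{\perp}}$ need not be positive definite), and Haynsworth's formula is exactly what guarantees $A/A|_V \succ 0$, and thus $M \geq 0$.
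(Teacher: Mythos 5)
Your proof follows the same overall skeleton as the paper's: reduce to the case where $A$ restricted to the lineality space is negative definite (otherwise the intersection with the level set is unbounded), complete the square to identify the slice as an ellipsoid, read off that the diameter equals $2\sqrt{(\max_S f - l)/|\mu_1|}$, and then establish the two bounds $\max_S f \geq 0$ and $|\mu_1|\leq |a_{n-m+1}|$. The second bound is Courant--Fischer in the paper and Cauchy interlacing in yours, which are the same thing. The genuinely different ingredient is your proof of $\max_S f\geq 0$: you identify $\max_S f$ as $v_0^T(A/A|_V)v_0$ and invoke Haynsworth's inertia additivity to conclude the Schur complement is positive definite, whereas the paper argues geometrically that the $m\times m$ block $V((n-m+1){:}n,\,{:})$ must be invertible (else $V^TAV$ could not be negative definite), so the affine space $S$ necessarily meets the positive-semidefinite subspace $Z=\{z_n=\cdots=z_{n-m+1}=0\}$ on which $f\geq 0$. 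Your Schur-complement argument is tighter (it yields $M=0$ iff $v_0=\mathbf{0}$, which the paper does not explicitly extract), and the two arguments trade a dimension count for a piece of matrix algebra; both are fine.

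One claim in your final paragraph is wrong, however: the assertion that $|\mu_1|=|a_{n-m+1}|$ ``by the equality case of Cauchy interlacing selects $V=\mathrm{span}\{e_{n-m+1},\dots,e_n\}$'' is false. Equality in the top interlacing inequality does not pin down the subspace. The paper itself points this out right after the lemma (``the minimizing subspace need not be unique, even if the values of $a_j$ are distinct'') and Example \ref{exa:failure-first-try} is an explicit counterexample: for $f(x)=x_1^2-x_2^2-3x_3^2$ the plane $\{x_1=x_3\}$ passes through the origin, has $V^TAV=-I_2$ so $\mu_1=a_2=-1$, yet it is not $\mathrm{span}\{e_2,e_3\}$. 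Lemma \ref{lem:quadratic-min-max} is only asserting that the displayed triple is \emph{an} optimizer; what you should say is simply that the choice $v_0=\mathbf{0}$, $V=\mathrm{span}\{e_{n-m+1},\dots,e_n\}$ attains the lower bound and on that slice the longest axis is along $\pm e_{n-m+1}$, which gives the stated $\bar x$, $\bar y$. With that one sentence repaired, the proof is correct.
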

\begin{proof}
Let $S_{\bar{z},V}:=\{\bar{z}+Vw\mid w\in\mathbb{R}^{m}\}$, where
$V\in\mathbb{R}^{n\times m}$ is a matrix with orthonormal columns.
Let the matrix $A\in\mathbb{R}^{n\times n}$ be the diagonal matrix
with entries $a_{j}$ in the $(j,j)$th position. The ellipse $S_{\bar{z},V}\cap\lev_{\geq l}f$
can be written as a union of elements of the form $\bar{z}+Vw$, where
$w$ satisfies \begin{eqnarray*}
(\bar{z}+Vw)^{T}A(\bar{z}+Vw) & \geq & l\\
\Leftrightarrow w^{T}V^{T}AVw+2\bar{z}^{T}AVw+\bar{z}^{T}A\bar{z} & \geq & l.\end{eqnarray*}
If the matrix $V^{T}AV$ has a nonnegative eigenvalue, then $S_{\bar{z},V}\cap\lev_{\geq l}f$
is unbounded. Otherwise, the set \[
\{\bar{z}+Vw\mid w^{T}V^{T}AVw+2\bar{z}^{T}AVw+\bar{z}^{T}A\bar{z}\geq l\}\]
is bounded. Therefore the inner maximization problem of \eqref{eq:quadratic-min-max}
corresponding to $S=S_{\bar{z},V}$ has a (not necessarily unique)
pair of minimizers. We continue completing the square with respect
to $w$ and let the symmetric matrix $C$ be the square root $C=[-V^{T}AV]^{\frac{1}{2}}$.\begin{eqnarray*}
-w^{T}C^{2}w+2\bar{z}^{T}AVw+\bar{z}^{T}A\bar{z} & \geq & l\\
\Leftrightarrow-(Cw-C^{-1}V^{T}A\bar{z})^{T}(Cw-C^{-1}V^{T}A\bar{z})+\bar{z}^{T}A\bar{z}+\bar{z}^{T}AVC^{-2}V^{T}A^{T}\bar{z} & \geq & l.\end{eqnarray*}
The maximum length between two points of an ellipse is twice the distance
between the center and the furthest point on the ellipse. (This fact
is easily proved by reducing to, and examining, the two dimensional
case.) The distance between the center and the furthest point on
the ellipse $S_{\bar{z},V}\cap\lev_{\geq l}f$ can be calculated to
be\[
\sqrt{\frac{1}{\alpha}(\bar{z}^{T}A\bar{z}+\bar{z}^{T}AVC^{-2}V^{T}A^{T}\bar{z}-l)},\]
where $\alpha$ is the square of the smallest eigenvalue in $C$,
or equivalently the negative of the largest eigenvalue of $V^{T}AV$.
The term $(\bar{z}^{T}A\bar{z}+\bar{z}^{T}AVC^{-2}V^{T}A^{T}\bar{z})$
is $\max\{f(x)\mid x\in S_{\bar{z},V}\}$, which we refer to as $\max_{S_{\bar{z},V}}f$.
We now proceed to minimize $\max_{S_{\bar{z},V}}f$ and maximize $\alpha$
separately.

\textbf{Claim 1: $\max_{S_{\bar{z},V}}f\geq0$.}

We first prove that the subspace \[
Z:=\{z\mid z_{n}=z_{n-1}=\cdots=z_{n-m+1}=0\}\]
must intersect $S_{\bar{z},V}$. Recall that $V^{T}AV$ is negative
definite. Therefore for any $w\neq\mathbf{0}$, $w^{T}V^{T}AVw<0$.
Since the first $n-m$ eigenvalues of $A$ are positive, $Vw$ cannot
be all zeros in its last $m$ components. This shows that the $m\times m$
matrix $V((n-m+1):n,1:m)$ is invertible. We can find some $\bar{w}$
such that the last $m$ components of $\bar{z}+V\bar{w}$ are zeros.
This shows that $S_{\bar{z},V}\cap Z\neq\emptyset$, so $\max_{S_{\bar{z},V}}f\geq\min\{f(x)\mid x\in Z\}=0$.

\textbf{Claim 2: $\alpha\leq-a_{n-m+1}$.}

To find the maximum value of $\alpha$, we recall that it is the negative
of the largest eigenvalue of $V^{T}AV$. Since $V\in\mathbb{R}^{n\times m}$,
the Courant-Fischer Theorem, gives $\alpha\leq-a_{n-m+1}$.

Choose the affine space $\bar{S}:=\{\mathbf{0}\}\times\mathbb{R}^{m}$.
This minimizes $\max_{S}f$ and maximizes $\alpha$ as well, giving
the optimal solution in the statement of the lemma.
\end{proof}
It should be noted however that the minimizing subspace need not be
unique, even if the values of $a_{j}$ are distinct. The example below
highlights how Algorithm \ref{alg:first-local-algorithm} can fail.
\begin{example}
(Failure of Algorithm \ref{alg:first-local-algorithm}) \label{exa:failure-first-try}Suppose
$f(x)=x_{1}^{2}-x_{2}^{2}-3x_{3}^{2}$. The subspace $S=\{x\mid x_{1}=x_{3}\}$
intersects the level set $\lev_{\geq-1}f$ in the disc \[
\left\{ \lambda\left(\frac{1}{\sqrt{2}}\sin\theta,\cos\theta,\frac{1}{\sqrt{2}}\sin\theta\right)\mid0\leq\theta\leq2\pi,0\leq\lambda\leq1\right\} .\]
The largest distance between two points on the disc is $2$, and the
subspace $S$ can be verified to give the optimal value to the min-max
problem \eqref{eq:key-exp2} by Lemma \ref{lem:quadratic-min-max}.

On the ray $S^{\perp}=\{\lambda(1,0,-1)\mid\lambda\in\mathbb{R}\}$,
the function $f$ is concave, hence there is no minimum. This example
illustrates that Algorithm \ref{alg:first-local-algorithm} can fail
in general. See Figure \ref{fig:bad-3d}.
\end{example}
\begin{figure}
\includegraphics[scale=0.4]{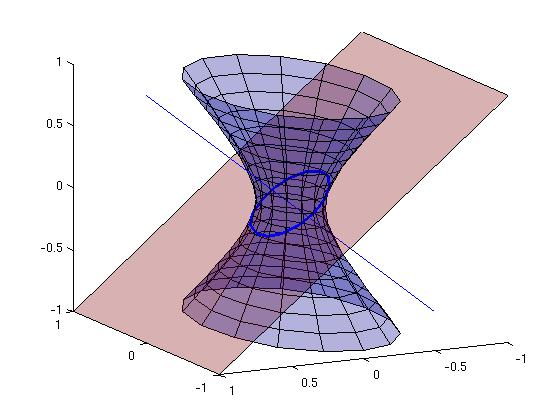}

\caption{\label{fig:bad-3d}An example where Algorithm \ref{alg:first-local-algorithm}
fails.}

\end{figure}

Example \ref{exa:failure-first-try} shows that even if there are
only 2 negative eigenvalues, it might be possible to find a two-dimensional
subspace $S$ on which the Hessian is negative definite on both $S$
and $S^{\perp}$. Therefore, we amend Algorithm \ref{alg:first-local-algorithm}
by determining the eigenspace corresponding to the $m$ smallest eigenvalues. 
\begin{algorithm}
\label{alg:fast-local-method} Fast local method to find saddle points
of Morse index $m$.\end{algorithm}
\begin{enumerate}
\item Set the iteration count $i$ to $0$, and let $l_{i}$ be a lower
bound of the critical value. 
\item Find $x_{i}$ and $y_{i}$, where $(S_{i}^{\prime},x_{i},y_{i})$
is an optimizing triple of\begin{equation}
\min_{S\in\mathcal{S}}\max_{x,y\in S\cap(\scriptsize\lev_{\geq l_{i}}f)\cap U_{i}}|x-y|,\label{eq:key-exp}\end{equation}
where $U_{i}$ is an open set. Here $\mathcal{S}$ is the set of $m$-dimensional
affine subspaces of $\mathbb{R}^{n}$ intersecting $U_{i}$. We emphasize
that the space where minimality is attained in the outer minimization
problem is $S_{i}^{\prime}$. After solving the above problem, find
the subspace $S_{i}$ that approximates the eigenspace corresponding
to the $m$ smallest eigenvalues using Algorithm \ref{alg:find-S-prime}
below.
\item For an optimizing triple $(S_{i},x_{i},y_{i})$ found in step 2, let
$l_{i+1}$ be the lower bound of $f$ on the $(n-m)$-dimensional
affine space passing through $z_{i}:=\frac{1}{2}(x_{i}+y_{i})$ whose
lineality space is orthogonal to the lineality space of $S_{i}$ .
\item Increase $i$ and go back to step 2 till convergence.
\end{enumerate}
A local algorithm is needed in Algorithm \ref{alg:fast-local-method}
to find the subspace $S_{i}$ in step 2.
\begin{algorithm}
\label{alg:find-S-prime}Finding the subspace $S_{i}$ in step 2 of
Algorithm \ref{alg:fast-local-method}:\end{algorithm}
\begin{enumerate}
\item Let $X_{1}=\mbox{span}\{x_{i}-y_{i}\}$, where $x_{i}$, $y_{i}$
are found in step 2 of Algorithm \ref{alg:fast-local-method}, and
let $j$ be $1$.
\item Find the closest point from $z_{i}:=\frac{1}{2}(x_{i}+y_{i})$ to
$\lev_{\leq l_{i}}f\cap(z_{i}+X_{j}^{\perp})$, which we call $\bar{p}_{j+1}$.
\item Let $X_{j+1}=\mbox{span}\{X_{j},\bar{p}_{j+1}-z_{i}\}$ and increase
$j$ by $1$. If $j=m$, let $S_{i}$ be $z_{i}+X_{m}$ and the algorithm
ends. Otherwise, go back to step 2.
\end{enumerate}
Step 2 of Algorithm \ref{alg:find-S-prime} finds the negative eigenvalues
and eigenvectors, starting from the eigenvalues furthest from zero.
Once all the eigenvectors are found, then $S_{i}$ is the span of
these eigenvectors. 

In some situations, the lineality space of $S_{i}$ are known in advance,
or do not differ too much from the previous iteration. In this case,
we can get around using Algorithm \ref{alg:find-S-prime} and use
the estimate instead.

We are now ready to prove the convergence of Algorithm \ref{alg:fast-local-method}.

\section{Proof of superlinear convergence of local algorithm\label{sec:Proof-of-superlinear}}

 We prove our result on the convergence of Algorithm \ref{alg:fast-local-method}
in steps. The first step is to look closely at a model problem. 
\begin{assumption}
\label{ass:condns-on-model}Given $\delta>0$, suppose $h:\mathbb{R}^{n}\rightarrow\mathbb{R}$
is $\mathcal{C}^{2}$, \begin{eqnarray*}
|\nabla h(x)-Ax| & \leq & \delta|x|,\\
\mbox{{\rm and }}|h(x)-\frac{1}{2}x^{T}Ax| & \leq & \frac{1}{2}\delta|x|^{2}\mbox{ {\rm for all }}x\in\mathbb{B},\end{eqnarray*}
where $A\in\mathbb{R}^{n\times n}$ is an invertible diagonal matrix
with diagonal entries ordered decreasingly, of which $a_{i}=A_{ii}$
and \[
a_{1}>a_{2}>\cdots>a_{n-m}>0>a_{n-m+1}>\cdots>a_{n}.\]
Define $h_{\min}:\mathbb{R}^{n}\rightarrow\mathbb{R}$ and $h_{\max}:\mathbb{R}^{n}\rightarrow\mathbb{R}$
by:\[
h_{\min}(x)=\frac{1}{2}x^{T}(A-\delta I)x\quad\mbox{and}\quad h_{\max}(x)=\frac{1}{2}x^{T}(A+\delta I)x.\]

\end{assumption}
It is clear that $\nabla h(\mathbf{0})=\mathbf{0}$, $h(\mathbf{0})=0$,
$\nabla^{2}h(\mathbf{0})=A$, and the Morse index is $m$. Here is
a simple observation that bounds the level sets of $h$:
\begin{prop}
(Level set property) The level sets of $h$ satisfy\begin{eqnarray*}
 &  & \mathbb{B}\cap\lev_{\geq l}h_{\min}\subset\mathbb{B}\cap\lev_{\geq l}h\subset\mathbb{B}\cap\lev_{\geq l}h_{\max},\\
 & \mbox{{\rm and}} & \mathbb{B}\cap\lev_{\leq l}h_{\max}\subset\mathbb{B}\cap\lev_{\leq l}h\subset\mathbb{B}\cap\lev_{\leq l}h_{\min}.\end{eqnarray*}
\end{prop}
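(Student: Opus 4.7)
The statement is a direct consequence of the pointwise bound on $h$ in Assumption \ref{ass:condns-on-model}. My plan is to first extract this pointwise sandwich and then observe that both chains of inclusions are then automatic.

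From $|h(x) - \tfrac{1}{2}x^T A x| \leq \tfrac{1}{2}\delta |x|^2$, valid for all $x \in \mathbb{B}$, I would rewrite the absolute-value inequality as the two-sided bound
\[
\tfrac{1}{2}x^T A x - \tfrac{1}{2}\delta |x|^2 \;\leq\; h(x) \;\leq\; \tfrac{1}{2}x^T A x + \tfrac{1}{2}\delta |x|^2.
\]
Since $|x|^2 = x^T I x$, the left-hand side is exactly $\tfrac{1}{2}x^T(A - \delta I)x = h_{\min}(x)$ and the right-hand side is $\tfrac{1}{2}x^T(A + \delta I)x = h_{\max}(x)$. Hence for every $x \in \mathbb{B}$ we have the pointwise sandwich
\[
h_{\min}(x) \;\leq\; h(x) \;\leq\; h_{\max}(x).
\]

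Given this sandwich, the level set inclusions are immediate. For the first chain: if $x \in \mathbb{B}$ satisfies $h_{\min}(x) \geq l$, then $h(x) \geq h_{\min}(x) \geq l$, giving $\mathbb{B} \cap \lev_{\geq l} h_{\min} \subset \mathbb{B} \cap \lev_{\geq l} h$; and if $h(x) \geq l$, then $h_{\max}(x) \geq h(x) \geq l$, giving the second inclusion. The second chain is completely symmetric: $h(x) \leq l$ forces $h_{\min}(x) \leq l$, and $h_{\max}(x) \leq l$ forces $h(x) \leq l$.

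There is no real obstacle here; the content is purely the observation that the hypothesis was already phrased as a sandwich between the two quadratics $h_{\min}$ and $h_{\max}$, so no further structure of $h$ or of $A$ is used. (The gradient bound and the ordering of eigenvalues in Assumption \ref{ass:condns-on-model} are irrelevant for this proposition and will only matter in later results.)
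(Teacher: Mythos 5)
Your proof is correct and follows exactly the approach the paper intends; the paper's own proof is the one-line remark that the proposition ``follows easily from $|h(x)-\frac{1}{2}x^{T}Ax|\leq\frac{1}{2}\delta|x|^{2}$ for all $x\in\mathbb{B}$,'' and you have simply unpacked that sandwich and the resulting level-set inclusions in full.
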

\begin{proof}
This follows easily from $|h(x)-\frac{1}{2}x^{T}Ax|\leq\frac{1}{2}\delta|x|^{2}$
for all $x\in\mathbb{B}$.
\end{proof}
For convenience, we highlight the standard problem below:
\begin{problem}
\label{pro:standard-prob}Suppose $g:\mathbb{R}^{n}\rightarrow\mathbb{R}$
is $\mathcal{C}^{2}$, with critical point $\mathbf{0}$ of Morse
index $m$, $g(\mathbf{0})=0$ and the Hessian $\nabla^{2}g(\mathbf{0})$
has distinct eigenvalues that are all nonzero. Consider the problem
\[
\min_{S\in\mathcal{S}}\max_{x,y\in S\cap(\scriptsize\lev_{\geq l}g)\cap\mathbb{B}}|x-y|,\]
where $\mathcal{S}$ is the set of $m$ dimensional affine subspaces.
\end{problem}
Note that in Problem \ref{pro:standard-prob}, we have limited the
region where $x$ and $y$ lie in by $\mathbb{B}$. Here is a result
on the optimizing pair $(\bar{x},\bar{y})$ of the inner maximization
problem in Problem \ref{pro:standard-prob}.
\begin{lem}
(Convergence to eigenvector and saddle point)\label{lem:min-maximizer-ppties}For
all $\delta>0$ sufficiently small, suppose that $h:\mathbb{R}^{n}\rightarrow\mathbb{R}$
is such that Assumption \ref{ass:condns-on-model} holds. Assume that
for the optimizing triple $(\bar{S},\bar{x},\bar{y})$ of Problem
\ref{pro:standard-prob} for $g=h$, $(\bar{x},\bar{y})$ is the unique
pair of points in $\bar{S}\cap(\lev_{\geq l}h)\cap\mathbb{B}$ such
that $|\bar{x}-\bar{y}|=\diam(\bar{S}\cap(\lev_{\geq l}h)\cap\mathbb{B})$.
Then there exists $\epsilon>0$ such that if $l<0$ satisfies $-\epsilon<l<0$,
then $(\bar{x},\bar{y})$ are such that $\frac{\bar{y}-\bar{x}}{|\bar{y}-\bar{x}|}$
converges to the $(n-m+1)$th eigenvector as $\delta\to0$, and $|\frac{1}{2}(\bar{x}+\bar{y})|^{2}/|l|\to0$
as $\delta\to0$.\end{lem}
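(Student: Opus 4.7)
My plan is to sandwich $h$ between the exact quadratics $h_{\min}$ and $h_{\max}$, solve those two sandwich problems explicitly by Lemma \ref{lem:quadratic-min-max}, and pass to the limit $\delta \to 0$ to identify the $h$-optimizer with the unique optimizer of the limiting quadratic $f(x):=\tfrac{1}{2}x^TAx$. For all $\delta>0$ small enough that the diagonal entries of $A\pm\delta I$ retain the strict ordering and sign pattern of $A$, Lemma \ref{lem:quadratic-min-max} identifies the (up-to-swap) unique optimizing pair of the $h_\pm$-problems as $\pm\sqrt{2l/(a_{n-m+1}\mp\delta)}\,e_{n-m+1}$ with midpoint $\mathbf{0}$ and unique optimal subspace $\mbox{span}\{e_{n-m+1},\dots,e_n\}$. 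I would fix $\epsilon>0$ small enough that these extremal points lie strictly inside $\mathbb{B}$ uniformly over $\delta$ near $0$ and $l\in(-\epsilon,0)$, so the same pairs remain optimal for the $\mathbb{B}$-restricted versions of those two problems.

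Using the preceding proposition on level sets, for every $m$-dimensional affine subspace $S$ one has
\[
\max_{S\cap\lev_{\geq l}h_{\min}\cap\mathbb{B}}|x-y|\;\leq\;\max_{S\cap\lev_{\geq l}h\cap\mathbb{B}}|x-y|\;\leq\;\max_{S\cap\lev_{\geq l}h_{\max}\cap\mathbb{B}}|x-y|,
\]
and taking infima over $S$ sandwiches the optimal diameter for the $h$-problem between $2\sqrt{2|l|/(|a_{n-m+1}|+\delta)}$ and $2\sqrt{2|l|/(|a_{n-m+1}|-\delta)}$; both collapse to $2\sqrt{2|l|/|a_{n-m+1}|}$ as $\delta\to 0$. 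Given any sequence $\delta_k\downarrow 0$ with corresponding optimizers $(\bar S_k,\bar x_k,\bar y_k)$, compactness of $\mathbb{B}$ and of the Grassmannian of $m$-dimensional subspaces meeting $\mathbb{B}$ yields a convergent subsequence $(\bar S_k,\bar x_k,\bar y_k)\to(S^*,x^*,y^*)$. Since Assumption \ref{ass:condns-on-model} forces $h\to f$ uniformly on $\mathbb{B}$, the limit satisfies $x^*,y^*\in S^*\cap\lev_{\geq l}f\cap\mathbb{B}$ with $|x^*-y^*|=2\sqrt{2|l|/|a_{n-m+1}|}$.

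Finally, the same quadratic sandwich, now applied to the moving subspaces $\bar S_k$ and to the fixed limit subspace $S^*$, gives $\diam(\bar S_k\cap\lev_{\geq l}h_k\cap\mathbb{B})\to\diam(S^*\cap\lev_{\geq l}f\cap\mathbb{B})$, so $(S^*,x^*,y^*)$ is itself optimal for the limit quadratic problem. By the uniqueness clause of Lemma \ref{lem:quadratic-min-max} applied to $f$ (which follows from the Courant-Fischer characterization of $\alpha = -a_{n-m+1}$ together with the distinctness of the eigenvalues of $A$), $S^*=\mbox{span}\{e_{n-m+1},\dots,e_n\}$ and $\{x^*,y^*\}=\{\pm\sqrt{2|l|/|a_{n-m+1}|}\,e_{n-m+1}\}$. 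Since every subsequential limit has this form, the full family satisfies $(\bar y-\bar x)/|\bar y-\bar x|\to\pm e_{n-m+1}$ and $\tfrac{1}{2}(\bar x+\bar y)\to\mathbf{0}$ as $\delta\to 0$; the second statement gives $|\tfrac{1}{2}(\bar x+\bar y)|^2/|l|\to 0$ since $|l|>0$ is a fixed positive constant. I expect the main technical obstacle to be the joint continuity step $\diam(\bar S_k\cap\lev_{\geq l}h_k\cap\mathbb{B})\to\diam(S^*\cap\lev_{\geq l}f\cap\mathbb{B})$ under simultaneous perturbation of the affine subspace, the objective, and the constraint set; the explicit quadratic sandwich is precisely what converts this into a direct numerical comparison between ellipsoidal level sets and so avoids a delicate general set-valued continuity argument.
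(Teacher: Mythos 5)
The fatal flaw is your appeal to a ``uniqueness clause'' of Lemma~\ref{lem:quadratic-min-max}. No such clause exists, and the paper warns about this explicitly: immediately after proving that lemma, the author remarks that the minimizing subspace need not be unique even when the $a_j$ are distinct, and then gives Example~\ref{exa:failure-first-try} ($f(x)=x_1^2-x_2^2-3x_3^2$, $m=2$) where both $\bar S=\{0\}\times\mathbb{R}^2$ and $S=\{x: x_1=x_3\}$ achieve the same optimal diameter. For the latter subspace, $S\cap\lev_{\geq l}f\cap\mathbb{B}$ is a genuine Euclidean disc, so \emph{every} antipodal pair $(\bar x,\bar y)$ is a diameter pair, and $(\bar y-\bar x)/|\bar y-\bar x|$ can equal any unit vector in the plane $\{x_1=x_3\}$, not just $\pm e_{n-m+1}=\pm e_2$. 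Your Courant--Fischer justification establishes the optimal value $\alpha=-a_{n-m+1}$ but says nothing about uniqueness of the subspace attaining it, because Courant--Fischer extremizes a Rayleigh quotient over $m$-dimensional subspaces and the extremizing subspace is generally not unique.

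This uniqueness failure is not peripheral; it is precisely what your compactness argument cannot survive. Your subsequential limit $(S^*,x^*,y^*)$ could (a priori) land on the degenerate subspace $\{x_1=x_3\}$ with $(x^*,y^*)$ any antipodal pair on the circle, and then the conclusion about the direction of $\bar y-\bar x$ fails. Notice that you also never invoke the lemma's standing hypothesis that $(\bar x,\bar y)$ is the \emph{unique} diameter pair in $\bar S\cap\lev_{\geq l}h\cap\mathbb{B}$ --- that hypothesis is exactly what licenses Theorem~\ref{pro:opposite-directions}, which is how the paper's proof escapes the degeneracy: from $\bar y-\bar x=\lambda_1\nabla h(\bar x)$ and $\bar x-\bar y=\lambda_2\nabla h(\bar y)$ with $\lambda_1,\lambda_2>0$, combined with $\nabla h\approx A(\cdot)$ from Assumption~\ref{ass:condns-on-model}, the paper shows $\bar y-\bar x$ is nearly an eigenvector of $A$, and then Lemma~\ref{lem:bounds-on-primes} pins the index to $n-m+1$. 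That first-order condition rules out the circle's off-axis diameters (for instance $\pm(1,0,1)/\sqrt2$ is not a positive multiple of $\nabla f$ at that point), whereas pure compactness plus a false uniqueness claim does not. Your secondary worry --- the lower semicontinuity of $S\mapsto\diam(S\cap\lev_{\geq l}f\cap\mathbb{B})$ needed to conclude $S^*$ is optimal --- is real and would also need work, but it is dominated by the uniqueness gap. Without something that plays the role of the optimality condition, the proposal cannot reach the stated conclusion.
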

\begin{proof}
Since $\lev_{\leq l}h_{\max}\cap\mathbb{B}\subset\lev_{\leq l}h\cap\mathbb{B}$,
we look at the the optimal solution of the min-max problem for $h_{\max}$
first. The objective of Problem \ref{pro:standard-prob} for $g=h_{\max}$
is $2\sqrt{\frac{l}{a_{n-m+1}+\delta}}$ by Lemma \ref{lem:quadratic-min-max}.
This gives an upper bound for the min-max problem for $h$. Similarly,
by considering $h_{\min}$ instead, we deduce that Problem \ref{pro:standard-prob}
has optimal solution bounded from below by $2\sqrt{\frac{l}{a_{n-m+1}-\delta}}$.

Recall the optimality condition in Proposition \ref{pro:opposite-directions}.
We now proceed to find the first pair of points with\textbf{ }opposite
pointing gradients. Let $\bar{x}$ and $\bar{y}$ be optimal points
at level $l_{i}$. Now, \begin{eqnarray*}
\bar{y}-\bar{x} & = & \lambda_{1}\nabla h(\bar{x}),\\
\mbox{ and }\bar{x}-\bar{y} & = & \lambda_{2}\nabla h(\bar{y}),\end{eqnarray*}
 for some $\lambda_{1},\lambda_{2}>0$. Then \begin{eqnarray*}
\lambda_{1}\nabla h(\bar{x})+\lambda_{2}\nabla h(\bar{y}) & = & \mathbf{0}\\
|\lambda_{1}A\bar{x}+\lambda_{2}A\bar{y}| & \leq & \lambda_{1}|A\bar{x}-\nabla h(\bar{x})|+\lambda_{2}|A\bar{y}-\nabla h(\bar{y})|\\
 & \leq & \delta(\lambda_{1}|\bar{x}|+\lambda_{2}|\bar{y}|)\\
|A(\lambda_{1}\bar{x}+\lambda_{2}\bar{y})| & \leq & \delta(\lambda_{1}|\bar{x}|+\lambda_{2}|\bar{y}|)\\
\Rightarrow|\lambda_{1}\bar{x}+\lambda_{2}\bar{y}| & \leq & |A^{-1}||A(\lambda_{1}\bar{x}+\lambda_{2}\bar{y})|\\
 & \leq & |A^{-1}|\delta(\lambda_{1}|\bar{x}|+\lambda_{2}|\bar{y}|).\end{eqnarray*}
This means that there are points $x^{\prime}$ and $y^{\prime}$ such
that $\lambda_{1}x^{\prime}+\lambda_{2}y^{\prime}=\mathbf{0}$, $|\bar{x}-x^{\prime}|\leq|A^{-1}|\delta|\bar{x}|$
and $|\bar{y}-y^{\prime}|\leq|A^{-1}|\delta|\bar{y}|$. With this,
we now concentrate on pairs of points that are negative multiples
of each other.

Now,\begin{eqnarray*}
\lambda_{1}\nabla h(\bar{x}) & = & \bar{y}-\bar{x}\\
\Rightarrow\nabla h(\bar{x}) & = & \frac{1}{\lambda_{1}}(\bar{y}-\bar{x})\\
\Rightarrow\left|A\bar{x}-\frac{1}{\lambda_{1}}(\bar{y}-\bar{x})\right| & \leq & \delta|\bar{x}|.\end{eqnarray*}
Similarly, this gives us\begin{eqnarray*}
\left|A(-\bar{y})-\frac{1}{\lambda_{2}}(\bar{y}-\bar{x})\right| & \leq & \delta|\bar{y}|.\end{eqnarray*}
Therefore, \[
\left|A(\bar{y}-\bar{x})+\left(\frac{1}{\lambda_{1}}+\frac{1}{\lambda_{2}}\right)(\bar{y}-\bar{x})\right|\leq\delta(|\bar{x}|+|\bar{y}|).\]
This gives:\begin{eqnarray}
 &  & \left|A(y^{\prime}-x^{\prime})+\left(\frac{1}{\lambda_{1}}+\frac{1}{\lambda_{2}}\right)(y^{\prime}-x^{\prime})\right|\nonumber \\
 & \leq & |A(y^{\prime}-x^{\prime})-A(\bar{y}-\bar{x})|+\left(\frac{1}{\lambda_{1}}+\frac{1}{\lambda_{2}}\right)|(y^{\prime}-x^{\prime})-(\bar{y}-\bar{x})|\nonumber \\
 &  & \qquad+\left|A(\bar{y}-\bar{x})+\left(\frac{1}{\lambda_{1}}+\frac{1}{\lambda_{2}}\right)(\bar{y}-\bar{x})\right|\nonumber \\
 & \leq & |A||A^{-1}|\delta(|\bar{x}|+|\bar{y}|)+\left(\frac{1}{\lambda_{1}}+\frac{1}{\lambda_{2}}\right)|A^{-1}|\delta(|\bar{x}|+|\bar{y}|)+\delta(|\bar{x}|+|\bar{y}|)\nonumber \\
 & = & \delta(|\bar{x}|+|\bar{y}|)\underbrace{\left(|A||A^{-1}|+\left(\frac{1}{\lambda_{1}}+\frac{1}{\lambda_{2}}\right)|A^{-1}|+1\right)}_{(1)}.\label{eq:y_minus_x_eigenvalue}\end{eqnarray}
Next, we relate $|\bar{y}-\bar{x}|$ and $|\bar{x}|+|\bar{y}|$. We
have \begin{eqnarray*}
|\bar{x}| & \leq & |x^{\prime}|+|A^{-1}|\delta|\bar{x}|\\
\Rightarrow(1-|A^{-1}|\delta)|\bar{x}| & \leq & |x^{\prime}|,\end{eqnarray*}
and similarly, $(1-|A^{-1}|\delta)|\bar{y}|\leq|y^{\prime}|$. It
is clear that $|y^{\prime}-x^{\prime}|=|x^{\prime}|+|y^{\prime}|$,
and we get\begin{eqnarray*}
|\bar{x}|+|\bar{y}| & \leq & \frac{1}{1-|A^{-1}|\delta}(|x^{\prime}|+|y^{\prime}|)\\
 & = & \frac{1}{1-|A^{-1}|\delta}|y^{\prime}-x^{\prime}|\\
 & \leq & \frac{1}{1-|A^{-1}|\delta}[|\bar{y}-\bar{x}|+|A^{-1}|\delta(|\bar{x}|+|\bar{y}|)]\\
\Rightarrow\left(1-\frac{|A^{-1}|\delta}{1-|A^{-1}|\delta}\right)(|\bar{x}|+|\bar{y}|) & \leq & \frac{1}{1-|A^{-1}|\delta}|\bar{y}-\bar{x}|\\
\Rightarrow(1-2|A^{-1}|\delta)(|\bar{x}|+|\bar{y}|) & \leq & |\bar{y}-\bar{x}|.\end{eqnarray*}
To show that $(1)$ in \eqref{eq:y_minus_x_eigenvalue} converges
to $0$ as $\delta\searrow0$, we need to show that $(\frac{1}{\lambda_{1}}+\frac{1}{\lambda_{2}})$
remains bounded as $\delta\searrow0$. Note that \begin{eqnarray*}
\nabla h(\bar{x})-\nabla h(\bar{y}) & = & \left(\frac{1}{\lambda_{1}}+\frac{1}{\lambda_{2}}\right)(\bar{y}-\bar{x})\\
\Rightarrow\left|\frac{1}{\lambda_{1}}+\frac{1}{\lambda_{2}}\right| & = & \frac{\left|\nabla h(\bar{x})-\nabla h(\bar{y})\right|}{|\bar{y}-\bar{x}|}\\
 & \leq & \frac{1}{|\bar{y}-\bar{x}|}[|A(\bar{x}-\bar{y})|+\delta(|\bar{x}|+|\bar{y}|)]\\
 & \leq & \frac{1}{|\bar{y}-\bar{x}|}\left(|A||\bar{x}-\bar{y}|+\frac{1}{1-2|A^{-1}|\delta}\delta|\bar{y}-\bar{x}|\right)\\
 & = & |A|+\frac{\delta}{1-2|A^{-1}|\delta}.\end{eqnarray*}
Since the eigenvectors depend continuously on the entries of a matrix
when the eigenvalues remain distinct, we see that $\frac{1}{|\bar{y}-\bar{x}|}(y^{\prime}-x^{\prime})$
converges to an eigenvector of $A$ as $\delta\rightarrow0$ from
formula \eqref{eq:y_minus_x_eigenvalue}. 

Next, we show that $\frac{1}{|\bar{y}-\bar{x}|}(\bar{y}-\bar{x})$
converges to an eigenvector corresponding to the eigenvalue $a_{n-m+1}$.
Recall that $2\sqrt{\frac{l}{a_{n-m+1}-\delta}}\leq|\bar{x}-\bar{y}|$
and $\left|(\bar{x}-\bar{y})-(x^{\prime}-y^{\prime})\right|\leq2|A^{-1}|\delta$.
 So $\frac{1}{|\bar{y}-\bar{x}|}(\bar{y}-\bar{x})$ has the same
limit as $\frac{1}{|\bar{y}-\bar{x}|}(y^{\prime}-x^{\prime})$. If
$x^{\prime}$ and $y^{\prime}$ are such that $\frac{1}{|\bar{y}-\bar{x}|}(y^{\prime}-x^{\prime})$
converges to a eigenvector corresponding to $a_{k}$, then Lemma \ref{lem:bounds-on-primes}
below gives us the following chain of inequalities: \begin{eqnarray*}
|\bar{x}-\bar{y}| & \leq & |\bar{x}|+|\bar{y}|\\
 & \leq & 2\sqrt{[(1+\theta)^{2}+(n-1)\theta^{2}]\frac{l}{(a_{k}+\delta)(1-\theta)^{2}+(n-1)(a_{1}+\delta)\theta^{2}}},\end{eqnarray*}
where $\theta\to0$ as $\delta\searrow0$. We note that $k\geq n-m+1$
because $a_{k}$ cannot be nonnegative. As $\delta\searrow0$, the
limit of the RHS of the above is $2\sqrt{\frac{l}{a_{k}}}$. This
gives a contradiction if $k>n-m+1$, so $k=n-m+1$.

\textbf{To show $\frac{1}{2}(\bar{x}+\bar{y})\to\mathbf{0}$ as $\delta\searrow0$:
}We now work out an upper bound for $\left|\frac{1}{2}(\bar{x}+\bar{y})\right|$
using Lemma \ref{lem:bounds-on-primes}. We get\begin{eqnarray*}
\left|\frac{1}{2}(\bar{x}+\bar{y})\right| & \leq & \left|\frac{1}{2}(x^{\prime}+y^{\prime})\right|+\frac{1}{2}|A^{-1}|\delta(|\bar{x}|+|\bar{y}|)\\
 & = & \frac{1}{2}\big||x^{\prime}|-|y^{\prime}|\big|+\frac{1}{2}|A^{-1}|\delta(|\bar{x}|+|\bar{y}|)\\
 & \leq & \frac{1}{2}\big||\bar{x}|-|\bar{y}|\big|+|A^{-1}|\delta(|\bar{x}|+|\bar{y}|)\\
 & \leq & \frac{1}{2}\sqrt{[(1+\theta)^{2}+(n-1)\theta^{2}]\frac{l}{(a_{n-m+1}+\delta)(1-\theta)^{2}+(n-1)(a_{1}+\delta)\theta^{2}}}\\
 &  & \qquad-\frac{1}{2}[1-\theta]\sqrt{\frac{l}{(a_{n-m+1}-\delta)(1+\theta)^{2}+(n-1)(a_{n}-\delta)\theta^{2}}}+|A^{-1}|\delta(|\bar{x}|+|\bar{y}|).\end{eqnarray*}
Here, $\theta>0$ is such that $\theta\to0$ as $\delta\to0$. At
this point, we note that the final formula above can be written as
$\frac{1}{2}\left(\sqrt{\frac{l}{c_{1}}}-\sqrt{\frac{l}{c_{2}}}\right)+|A^{-1}|\delta(|\bar{x}|+|\bar{y}|)$,
where $c_{1},c_{2}<0$, with $|c_{1}|<|c_{2}|$, and $c_{1},c_{2}\rightarrow a_{n-m+1}$
as $\delta\rightarrow0$. Therefore\begin{eqnarray*}
\left|\frac{1}{2}(\bar{x}+\bar{y})\right| & \leq & \frac{1}{2}\left(\sqrt{\frac{l}{c_{1}}}-\sqrt{\frac{l}{c_{2}}}\right)+|A^{-1}|\delta(|\bar{x}|+|\bar{y}|)\\
 & = & \frac{1}{2}\frac{\frac{l}{c_{1}}-\frac{l}{c_{2}}}{\sqrt{\frac{l}{c_{1}}}+\sqrt{\frac{l}{c_{2}}}}+|A^{-1}|\delta(|\bar{x}|+|\bar{y}|)\\
 & \leq & \frac{1}{2c_{1}c_{2}}\frac{l(c_{2}-c_{1})}{2\sqrt{\frac{l}{c_{2}}}}+|A^{-1}|\delta(|\bar{x}|+|\bar{y}|)\\
 & \leq & \frac{c_{2}-c_{1}}{4c_{1}}\sqrt{\frac{l}{c_{2}}}+|A^{-1}|\delta(|\bar{x}|+|\bar{y}|).\end{eqnarray*}
It is clear that as $\delta\rightarrow0$, the above formula goes
to zero, so $|\frac{1}{2}(\bar{x}+\bar{y})|^{2}/|l|\rightarrow0$
as $\delta\rightarrow0$ as needed.
\end{proof}
In Lemma \ref{lem:bounds-on-primes} below, we say that $e_{i}$ is
the $i$th \emph{elementary vector} if it is the $i$th column of
the identity matrix. It is also the eigenvector corresponding to the
eigenvalue $a_{k}$ of $A$. 
\begin{lem}
(Length estimates of vectors) \label{lem:bounds-on-primes} Let $h:\mathbb{R}^{n}\rightarrow\mathbb{R}$
and $A\in\mathbb{R}^{n\times n}$ satisfy Assumption \ref{ass:condns-on-model}
for some $\delta>0$. Suppose $h(\bar{x})=h(\bar{y})=l<0$. Suppose
$\theta>0$ is such that $|d_{\bar{x}}-e_{k}|_{\infty}<\theta$ for
some $d_{\bar{x}}$ pointing in the same direction as $\bar{x}$,
and that the same relation holds for $\bar{y}$. 

Then $|\bar{x}|$ and $|\bar{y}|$ are bounded from below and above
by\[
(1-\theta)\sqrt{\frac{l}{(a_{k}-\delta)(1+\theta)^{2}+(n-1)(a_{n}-\delta)\theta^{2}}}\leq|\bar{x}|,|\bar{y}|,\]
\[
|\bar{x}|,|\bar{y}|\leq\sqrt{[(1+\theta)^{2}+(n-1)\theta^{2}]\frac{l}{(a_{k}+\delta)(1-\theta)^{2}+(n-1)(a_{1}+\delta)\theta^{2}}}.\]
\end{lem}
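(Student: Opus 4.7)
The plan is to sandwich the value $h(\bar x) = l$ between the two quadratic surrogates $h_{\min}(\bar x) = \tfrac{1}{2}\bar x^T(A-\delta I)\bar x$ and $h_{\max}(\bar x) = \tfrac{1}{2}\bar x^T(A+\delta I)\bar x$ supplied by Assumption~\ref{ass:condns-on-model}, and then convert those quadratic inequalities into the explicit bounds on $|\bar x|^2$ using the coordinatewise control $|d_{\bar x}-e_k|_{\infty}<\theta$. The argument for $|\bar y|$ is identical.

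First I write $\bar x = \beta\, d_{\bar x}$ for the appropriate scalar $\beta$, which factors the quadratic forms as
$$\bar x^{T}(A\pm\delta I)\bar x \;=\; \frac{|\bar x|^2}{|d_{\bar x}|^2}\, \sum_{i=1}^n (a_i \pm \delta)\, d_{\bar x, i}^2.$$
The coordinatewise hypothesis gives $d_{\bar x, k}^2 \in ((1-\theta)^2,(1+\theta)^2)$ and $d_{\bar x, i}^2 < \theta^2$ for $i \neq k$, from which I read off $(1-\theta)^2 \le |d_{\bar x}|^2 \le (1+\theta)^2+(n-1)\theta^2$, together with the sign-aware quadratic-form estimates
$$\sum_i (a_i+\delta)\, d_{\bar x, i}^2 \;\le\; (a_k+\delta)(1-\theta)^2 + (n-1)(a_1+\delta)\theta^2,$$
using $a_k+\delta<0$ and $a_i+\delta \le a_1+\delta$ with $a_1+\delta>0$, and the symmetric lower estimate
$$\sum_i (a_i-\delta)\, d_{\bar x, i}^2 \;\ge\; (a_k-\delta)(1+\theta)^2 + (n-1)(a_n-\delta)\theta^2,$$
using $a_k-\delta<0$ and $a_i-\delta \ge a_n-\delta$ with $a_n-\delta<0$.

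Feeding these into the sandwich $h_{\min}(\bar x) \le l \le h_{\max}(\bar x)$ and isolating $|\bar x|^2$ yields the two inequalities in the lemma, after taking square roots. The main obstacle is the careful sign bookkeeping: since $l$ and both quadratic-form sums are negative, each division step reverses the inequality, so one must track which extremal choice of $d_{\bar x}$ delivers which direction of the bound. Relatedly, one must verify at the outset that, for $\delta$ and $\theta$ sufficiently small (depending on the spectral gap of $A$), both of the expressions $(a_k+\delta)(1-\theta)^2 + (n-1)(a_1+\delta)\theta^2$ and $(a_k-\delta)(1+\theta)^2 + (n-1)(a_n-\delta)\theta^2$ remain strictly negative, so that the square roots in the conclusion are well-defined; this is immediate since the dominant term $(a_k\pm\delta)(1\mp\theta)^2$ is $O(1)$ negative while the off-diagonal contributions are $O(\theta^2)$.
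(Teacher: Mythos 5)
Your argument follows the same route as the paper's proof: parametrize $\bar{x}$ by scale and direction, sandwich $h$ between $h_{\min}$ and $h_{\max}$ from Assumption~\ref{ass:condns-on-model}, and feed in the coordinatewise bounds coming from $|d_{\bar{x}}-e_{k}|_{\infty}<\theta$; the sign bookkeeping on the two quadratic-form sums and the check that the denominators stay negative are both correct. However, you assert rather than carry out the final isolation of $|\bar{x}|^{2}$, and there the constants do not quite line up: keeping the factor $\frac{1}{2}$ in $h_{\min}(x)=\frac{1}{2}x^{T}(A-\delta I)x$, as you correctly do, produces an extra $\sqrt{2}$ in each bound --- for instance the lower estimate comes out as $(1-\theta)\sqrt{2l/[(a_{k}-\delta)(1+\theta)^{2}+(n-1)(a_{n}-\delta)\theta^{2}]}$, not the lemma's $(1-\theta)\sqrt{l/[(a_{k}-\delta)(1+\theta)^{2}+(n-1)(a_{n}-\delta)\theta^{2}]}$. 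This $\sqrt{2}$ is silently dropped in the paper's own proof (which writes $h(t\,\mathrm{unit}(d))\leq\sum(a_{i}-\delta)[t\,\mathrm{unit}(d)_{i}]^{2}$ without the $\frac{1}{2}$, and in fact also with the wrong inequality direction relative to $h\geq h_{\min}$), so the discrepancy is a defect of the stated lemma rather than of your method. Still, you should complete the final algebra instead of claiming the sandwich \emph{yields the two inequalities in the lemma}: done carefully it yields a slightly different, and in fact sharper, pair of bounds.
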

\begin{proof}
Necessarily we must have $k\geq n-m+1$ because if $k<n-m+1$, then
the eigenvalue $a_{k}$ is positive, making the direction $e_{k}$
and its nearby directions directions of ascent. Let $d:=d_{\bar{x}}$.
From $|d-e_{k}|_{\infty}<\theta$, we obtain $1-\theta\leq d_{k}\leq1+\theta$
and $|d_{j}|<\theta$ for $j\neq k$.

For the direction $d$, we find the largest and smallest value for
the norm of $\bar{x}$ if $\mbox{unit}(\bar{x})=\mbox{unit}(d)$.
This is also the largest and smallest possible value of $t$ such
that $h(t\mbox{ unit}(d))=l$. Here, $\mbox{unit}(\cdot)$ maps a
nonzero vector to the unit vector of the same direction. Now\begin{eqnarray*}
h\big(t\mbox{ unit}(d)\big) & \leq & \sum_{i=1}^{n}(a_{i}-\delta)[t\mbox{ unit}(d)_{i}]^{2}\\
 & = & t^{2}\sum_{i=1}^{n}(a_{i}-\delta)\frac{d_{i}^{2}}{\sum_{j=1}^{n}d_{j}^{2}}\\
 & \leq & \frac{t^{2}}{\sum_{j=1}^{n}d_{j}^{2}}\left((a_{k}-\delta)d_{k}^{2}+(a_{n}-\delta)\sum_{j\neq k}d_{j}^{2}\right)\\
 & \leq & \frac{t^{2}}{(1-\theta)^{2}}[(a_{k}-\delta)(1+\theta)^{2}+(n-1)(a_{n}-\delta)\theta^{2}].\end{eqnarray*}
Since $h(t\mbox{ unit}(d))=l$, we have \[
t\leq(1-\theta)\sqrt{\frac{l}{(a_{k}-\delta)(1+\theta)^{2}+(n-1)(a_{n}-\delta)\theta^{2}}}.\]
Next, \begin{eqnarray*}
h(t\mbox{ unit}(d)) & \geq & \sum_{i=1}^{n}(a_{i}+\delta)[t\mbox{ unit}(d)_{i}]^{2}\\
 & = & \frac{t^{2}}{\sum_{j=1}^{n}d_{j}^{2}}\sum_{i=1}^{n}(a_{i}+\delta)d_{i}^{2}\\
 & \geq & \frac{t^{2}}{(1+\theta)^{2}+(n-1)\theta^{2}}[(a_{k}+\delta)(1-\theta)^{2}+(n-1)(a_{1}+\delta)\theta^{2}].\end{eqnarray*}
Again, since $h(t\mbox{ unit}(d))=l$, we have \[
t\geq\sqrt{[(1+\theta)^{2}+(n-1)\theta^{2}]\frac{l}{(a_{k}+\delta)(1-\theta)^{2}+(n-1)(a_{1}+\delta)\theta^{2}}}.\]

\end{proof}
Here are some lemmas on the completion of orthogonal matrices. In
Lemmas \ref{lem:first-matrix-approx} and \ref{lem:matrix-approx},
let $|\cdot|$ denote a norm for matrices (which need not be a matrix
norm).
\begin{lem}
(Completion to orthogonal matrix) \label{lem:first-matrix-approx}Let
$E_{k}\in\mathbb{R}^{n\times k}$ be the first $k$ columns of the
$n\times n$ identity matrix. Then for all $\epsilon>0$, there exists
$\delta>0$ such that if $V_{k}\in\mathbb{R}^{n\times k}$ has orthonormal
columns and $|V_{k}-E_{k}|<\delta$ , then $V_{k}$ can be completed
to an orthogonal matrix $V_{n}\in\mathbb{R}^{n\times n}$ such that
$|I-V_{n}|<\epsilon$.
\end{lem}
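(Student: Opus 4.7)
The plan is to complete $V_k$ by appending the standard basis vectors $e_{k+1},\ldots,e_n$ and then applying a Gram--Schmidt procedure that leaves the first $k$ columns alone. Concretely, I would define the $n\times n$ matrix $W:=[\,V_k\mid e_{k+1}\mid\cdots\mid e_n\,]$. When $V_k=E_k$ we have $W=I$, so $W$ is invertible, and since invertibility is an open condition this persists for all $V_k$ sufficiently close to $E_k$. Also, because the last $n-k$ columns of $W$ and of $I$ coincide exactly, we have the identity $|W-I|=|V_k-E_k|$.

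Next I would run Gram--Schmidt on the columns of $W$ from left to right to obtain an orthogonal matrix $V_n$. The key observation is that the first $k$ columns of $W$ are already orthonormal (they are the columns of $V_k$), so the process leaves them untouched: the first $k$ columns of $V_n$ are exactly $V_k$, so $V_n$ genuinely completes $V_k$ to an orthogonal matrix.

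It remains to argue continuity. Restricted to invertible matrices, the Gram--Schmidt map is given by rational functions of the entries whose denominators (the successive norms of the projections) are bounded away from zero on a neighborhood of $I$, so this map is continuous there, and it sends $I$ to $I$. Since all norms on $\mathbb{R}^{n\times n}$ are equivalent, continuity in any one norm implies continuity in the given $|\cdot|$: for any $\epsilon>0$ there exists $\delta>0$ such that $|W-I|<\delta$ implies $|V_n-I|<\epsilon$, and then $|V_k-E_k|<\delta$ yields $|V_n-I|<\epsilon$ via the identity $|W-I|=|V_k-E_k|$.

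I do not expect any real obstacle; the lemma is essentially a continuity statement for the QR map near the identity, and the only subtlety is making sure the completion construction preserves the columns of $V_k$, which is guaranteed by orthonormality of those columns.
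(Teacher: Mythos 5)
Your proposal is correct and rests on the same underlying mechanism as the paper's, namely Gram--Schmidt applied near the identity, but the packaging differs. The paper proves the one-vector-at-a-time version (Lemma \ref{lem:matrix-approx}): project $e_{k+1}$ off the columns of $V_k$, normalize, and track explicit entrywise bounds on the new column in the $\infty$-norm, then iterate to build up $V_n$. You instead append $e_{k+1},\dots,e_n$ all at once to form $W=[V_k\mid e_{k+1}\mid\cdots\mid e_n]$, note that $W$ is a small perturbation of $I$ hence invertible, run Gram--Schmidt on $W$ (which fixes the first $k$ columns because they are already orthonormal), and invoke continuity of the Gram--Schmidt/QR map at $I$ together with equivalence of norms. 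This is a cleaner, softer argument that produces the whole completion in one shot, at the cost of being nonquantitative, whereas the paper's inductive version yields explicit $\delta$ in terms of $\epsilon$, $n$, $k$. Two small points of hygiene: the Gram--Schmidt map is not literally a rational function of the entries (the normalization introduces square roots of inner products), but it is continuous on the open set where the relevant leading principal minors are nonzero, which is what you actually need; and the identity $|W-I|=|V_k-E_k|$ compares norms on spaces of different shapes ($n\times n$ versus $n\times k$) and so should be read in an entrywise norm (or replaced by a two-sided comparability), which your appeal to norm equivalence already covers. Neither affects the validity of the argument.
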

The above lemma is an easy consequence of the following result.
\begin{lem}
(Finding orthogonal vector) \label{lem:matrix-approx}For all $\epsilon>0$,
there exists a $\delta>0$ such that if $V_{k}\in\mathbb{R}^{n\times k}$
has orthonormal columns and $|V_{k}-E_{k}|<\delta$ , then there is
a vector $v_{k+1}\in\mathbb{R}^{n}$ such that $|v_{k+1}|_{2}=1$
and is orthogonal to all columns of $V_{k}$, and the concatenation
$V_{k+1}:=[V_{k},v_{k+1}]$ satisfies $|V_{k+1}-E_{k+1}|<\epsilon$.\end{lem}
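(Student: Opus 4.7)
The plan is to construct $v_{k+1}$ explicitly as a normalized orthogonal projection of the standard basis vector $e_{k+1}$ onto the orthogonal complement of the column space of $V_k$, and then show that when $V_k$ is close to $E_k$, this projection is close to $e_{k+1}$ and has norm close to $1$.

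First, since we are in finite dimensions, all norms are equivalent, so we may assume without loss of generality that $|\cdot|$ is the Frobenius norm (the constants will change, but the quantifier statement is unaffected). Given $V_k$ with orthonormal columns, define
\[
w := e_{k+1} - V_k V_k^T e_{k+1},
\]
the orthogonal projection of $e_{k+1}$ onto the orthogonal complement of $\Range(V_k)$. By construction, $w$ is orthogonal to every column of $V_k$. When $V_k = E_k$, we have $E_k^T e_{k+1} = \mathbf{0}$, so $w = e_{k+1}$; the strategy is to control the perturbation caused by $V_k - E_k$.

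Next I would bound $|w - e_{k+1}|_2 = |V_k V_k^T e_{k+1}|_2$. Writing $V_k^T e_{k+1} = (V_k - E_k)^T e_{k+1} + E_k^T e_{k+1} = (V_k - E_k)^T e_{k+1}$, we get $|V_k^T e_{k+1}|_2 \leq |V_k - E_k|_2 \leq C\,\delta$ for some constant $C$ depending on $n$, and then since $V_k$ has orthonormal columns, $|V_k V_k^T e_{k+1}|_2 \leq |V_k^T e_{k+1}|_2 \leq C\,\delta$. Consequently $|w|_2 \in [1 - C\delta, 1 + C\delta]$, so for $\delta < 1/(2C)$ we have $|w|_2 \geq 1/2$ and we may safely set $v_{k+1} := w / |w|_2$. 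A short triangle inequality calculation then gives
\[
|v_{k+1} - e_{k+1}|_2 \leq \frac{|w - e_{k+1}|_2}{|w|_2} + \left| \frac{1}{|w|_2} - 1 \right| \leq \frac{C\delta}{1 - C\delta} + \frac{C\delta}{1 - C\delta},
\]
which can be made arbitrarily small by shrinking $\delta$.

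Finally, since $V_{k+1} - E_{k+1}$ is the horizontal concatenation of $V_k - E_k$ and $v_{k+1} - e_{k+1}$, its norm is bounded (up to a constant depending only on $n$) by $|V_k - E_k| + |v_{k+1} - e_{k+1}|_2$, which is $O(\delta)$; choosing $\delta$ small enough yields $|V_{k+1} - E_{k+1}| < \epsilon$. There is no real obstacle here: the construction is forced by the requirement of orthogonality to the columns of $V_k$, and the only thing to be careful about is that $|w|_2$ stays bounded away from zero so that normalization is well defined, which the estimate above handles.
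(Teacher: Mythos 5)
Your proof is correct and uses the same core construction as the paper: set $w=(I-V_kV_k^T)e_{k+1}$, the Gram--Schmidt projection of $e_{k+1}$ onto the orthogonal complement of $\Range(V_k)$, check that $|w|_2$ is bounded away from zero when $\delta$ is small, and set $v_{k+1}=w/|w|_2$. The difference is in how the error is estimated. The paper works coordinate-by-coordinate in the $\infty$-norm, writing $w=e_{k+1}+\alpha_{k+1}$ with $\alpha_{k+1}=-\sum_i V_k(k+1,i)V_k(:,i)$ and tracking component bounds on $\alpha_{k+1}$ and on the normalized vector. You instead observe that $V_k^T e_{k+1}=(V_k-E_k)^T e_{k+1}$ because $E_k^T e_{k+1}=\mathbf{0}$, then use the isometry $|V_k u|_2=|u|_2$ for matrices with orthonormal columns to get $|w-e_{k+1}|_2\leq|V_k-E_k|$ in one line, and finish with the triangle inequality after normalization. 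Your version is shorter and avoids the paper's componentwise bookkeeping; the price is that you invoke the operator-norm and orthonormal-isometry facts rather than bare arithmetic, but both are elementary. Both proofs are complete and equally correct.
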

\begin{proof}
Since all finite dimensional norms are equivalent, we can assume that
the norm $|\cdot|$ on $\mathbb{R}^{n\times k}$, $\mathbb{R}^{n\times(k+1)}$
is the $\infty$-norm for vectors, that is $|M|=\max_{i,j}|M(i,j)|$.
Suppose $|V_{k}-E_{k}|<\delta$. Then $|V_{k}(i,j)|<\delta$ if $i\neq j$
and $|V_{k}(i,i)-1|<\delta$. We now construct the vector $v_{k+1}$
using the Gram-Schmidt process. 

The direction of $v_{k+1}$ obtained by the Gram-Schmidt process is:\begin{eqnarray*}
(I-V_{k}V_{k}^{T})e_{k+1} & = & e_{k+1}-V_{k}V_{k}^{T}e_{k+1}\\
 & = & e_{k+1}-\sum_{i=1}^{k}V_{k}(i,k+1)V_{k}(:,i).\end{eqnarray*}
Since $|V_{k}(i,j)|<\delta$ for all $i\neq j$, the sum $\alpha_{k+1}\in\mathbb{R}^{n}$
defined by $\alpha_{k+1}=\sum_{i=1}^{k}V_{k}(i,k+1)V_{k}(:,i)$ has
components obeying the bounds\[
|\alpha_{k+1}(j)|\leq\left\{ \begin{array}{ll}
k\delta^{2} & \mbox{ if }j\geq k+1,\\
(k-1)\delta^{2}+\delta & \mbox{ if }j<k+1.\end{array}\right.\]
Then $v_{k+1}=\mbox{unit}(e_{k+1}+\alpha_{k+1})$. We first analyze
the maximum error in $v_{k+1}(j)$ for $j\neq k+1$. The $2-$norm
of $e_{k+1}+\alpha_{k+1}$ is at least \begin{eqnarray*}
|e_{k+1}+\alpha_{k+1}|_{2} & \geq & |e_{k+1}|_{2}-|\alpha_{k+1}|_{2}\\
 & \geq & 1-\sqrt{\sum_{i=1}^{n}[\alpha_{k+1}(i)]^{2}}\\
 & \geq & 1-\sqrt{(n-k)k\delta^{2}+k\left((k-1)\delta^{2}+\delta\right)}\\
 & \geq & 1-\sqrt{nk\delta^{2}+k\delta}.\end{eqnarray*}
If $\delta<\min\{1,\frac{1}{4(n+1)k}\}$, then $\sqrt{nk\delta^{2}+k\delta}\leq\frac{1}{2}$,
and so $\left|e_{k+1}+\alpha_{k+1}\right|_{2}\geq\frac{1}{2}$. In
this case, the maximum error in $v_{k+1}(j)$ is $2((k-1)\delta^{2}+\delta)$
for $j\neq k$. For $j=k+1$, we note that $|v_{k+1}|_{2}=1$, which
tells us that \begin{eqnarray*}
[v_{k+1}(k+1)]^{2} & = & 1-\sum_{1\leq i\leq n,i\neq k}|v_{k+1}(i)|^{2}\\
 & \geq & 1-4(n-1)[(k-1)\delta^{2}+\delta]^{2}\\
\Rightarrow v_{k+1}(k+1) & \geq & \sqrt{1-4(n-1)[(k-1)\delta^{2}+\delta]^{2}}\\
 & = & \sqrt{1-4(n-1)\delta^{2}[(k-1)\delta+1]^{2}}.\end{eqnarray*}
If $\delta<\min\{1,\frac{1}{2k\sqrt{n-1}}\}$, then $4(n-1)\delta^{2}[(k-1)\delta+1]^{2}<1$,
which gives \begin{eqnarray*}
v_{k+1}(k+1) & \geq & \sqrt{1-4(n-1)\delta^{2}[(k-1)\delta+1]^{2}}\\
 & \geq & 1-4(n-1)\delta^{2}[(k-1)\delta+1]^{2}\\
|v_{k+1}(k+1)-1| & \leq & 4(n-1)\delta^{2}[(k-1)\delta+1]^{2}.\end{eqnarray*}
If $\delta<\min\{\frac{\epsilon}{2k},\frac{1}{2k}\sqrt{\frac{\epsilon}{n-1}}\}$,
then \begin{eqnarray*}
|[V_{k},v_{k+1}]-E_{k+1}| & \leq & \max\{2[(k-1)\delta^{2}+\delta],4(n-1)\delta^{2}[(k-1)\delta+1]^{2}\}\\
 & < & \epsilon,\end{eqnarray*}
 and we are done.
\end{proof}
The next result shows that we get a closer estimate to the critical
value after each iteration of step 2 in Algorithm \ref{alg:fast-local-method}. 
\begin{lem}
(Lower bound on critical value) \label{lem:bdd-on-l-(i+1)}Let $\delta>0$
be sufficiently small, and suppose $h:\mathbb{R}^{n}\rightarrow\mathbb{R}$
satisfies Assumption \ref{ass:condns-on-model}. Let $S_{\bar{z},V}:=\{\bar{z}+Vw\mid w\in\mathbb{R}^{n-m}\}$,
and $V\in\mathbb{R}^{n\times(n-m)}$ be such that $V$ has orthonormal
columns, with $|\bar{z}-\mathbf{0}|<\delta$ and $|V-E_{n-m}|<\delta$,
where $E_{n-m}\in\mathbb{R}^{n\times(n-m)}$ is the first $n-m$ columns
of the identity matrix. Then\[
-\frac{1}{2}|A-\delta I|\left(1+|[V^{T}(A-\delta I)V]^{-1}||V^{T}||A-\delta I|\right)^{2}|\bar{z}|^{2}\leq\min_{s\in S_{\bar{z},V}\cap\mathbb{B}}h(s).\]
\end{lem}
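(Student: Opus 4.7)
The plan is to bound $h$ from below by its quadratic lower envelope $h_{\min}(x) = \tfrac{1}{2}x^T (A-\delta I)x$ (valid on $\mathbb{B}$ by Assumption \ref{ass:condns-on-model}) and then compute the unconstrained minimum of $h_{\min}$ on the affine subspace $S_{\bar{z},V}$. Writing $M := A - \delta I$ and $B := V^T M V$, points in $S_{\bar{z},V}$ have the form $s = \bar{z} + Vw$ with $w \in \mathbb{R}^{n-m}$, giving
\[
s^T M s \;=\; \bar{z}^T M \bar{z} + 2 \bar{z}^T M V w + w^T B w.
\]
The first and main step is to observe that for $\delta$ small enough, $B$ is positive definite. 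Indeed, when $V = E_{n-m}$ and $\delta = 0$, $B$ equals $\mathrm{diag}(a_1,\dots,a_{n-m})$, whose entries are all positive by Assumption \ref{ass:condns-on-model}; since $a_{n-m}>0$ is fixed and $V \mapsto V^T M V$ is continuous, restricting $\delta$ so that both $\delta < a_{n-m}/2$ and $|V - E_{n-m}| < \delta$ ensures $B \succ 0$.

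Once $B$ is positive definite, the unconstrained quadratic in $w$ is minimized at $w^* = -B^{-1} V^T M \bar{z}$, yielding $s^* := \bar{z} + Vw^*$. I would bound $|s^*|$ from above using the triangle inequality and the fact that $V$ has orthonormal columns (so $|Vw^*| = |w^*|$):
\[
|s^*| \;\le\; |\bar{z}| + |w^*| \;\le\; |\bar{z}| + |B^{-1}|\,|V^T|\,|M|\,|\bar{z}| \;=\; \bigl(1 + |B^{-1}|\,|V^T|\,|M|\bigr)|\bar{z}|.
\]
The minimum value of the quadratic then satisfies $(s^*)^T M s^* \ge -|M|\,|s^*|^2$, which after substitution gives exactly the bound in the statement (with a factor of $\tfrac{1}{2}$).

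To finish, I would chain the inequalities: for every $s \in S_{\bar{z},V} \cap \mathbb{B}$, Assumption \ref{ass:condns-on-model} gives $h(s) \ge \tfrac{1}{2} s^T M s \ge \tfrac{1}{2}(s^*)^T M s^*$, and the last quantity is bounded below by $-\tfrac{1}{2}|M|\bigl(1+|B^{-1}||V^T||M|\bigr)^2|\bar{z}|^2$. Taking the infimum on the left yields the claim. The one subtle point (and the only real obstacle) is justifying that $B$ is invertible under the hypotheses, since the lemma uses the single parameter $\delta$ both as the smallness threshold for $|\bar{z}|, |V-E_{n-m}|$ and as the perturbation of $A$; a brief continuity/compactness argument as above handles this uniformly. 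Note that the restriction $s \in \mathbb{B}$ is only used to invoke the quadratic envelope bound on $h$ — it makes the left-hand minimum larger than the unconstrained minimum of the quadratic, so the inequality is preserved regardless of whether $s^*$ itself lies in $\mathbb{B}$.
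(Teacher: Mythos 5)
Your proof is correct and follows essentially the same route as the paper: replace $h$ by its quadratic lower envelope $\tfrac12 x^T(A-\delta I)x$, minimize that quadratic over the affine subspace $S_{\bar z,V}$ via the explicit minimizer $\bar w_{\min}=-[V^T(A-\delta I)V]^{-1}V^T(A-\delta I)\bar z$, and then bound $|\bar z + V\bar w_{\min}|$ by the triangle inequality (using $|V\bar w_{\min}|=|\bar w_{\min}|$) to obtain the displayed constant. The only addition in your write-up is the brief continuity argument showing $V^T(A-\delta I)V\succ 0$ for small $\delta$; the paper asserts strict convexity of $h_{\bar z,V,\min}$ at this point without making the justification explicit, so your extra remark is a welcome clarification rather than a departure.
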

\begin{proof}
We find a lower bound for the smallest value of $h$ on $S_{\bar{z},V}$.
The function $h_{\bar{z},V}:\mathbb{R}^{n-m}\rightarrow\mathbb{R}$
defined by $h_{\bar{z},V}(w):=h(\bar{z}+Vw)$ satisfies\begin{eqnarray*}
h_{\bar{z},V}(w) & = & h(\bar{z}+Vw)\\
 & \geq & \frac{1}{2}(\bar{z}+Vw)^{T}(A-\delta I)(\bar{z}+Vw).\end{eqnarray*}
Let us denote $h_{\bar{z},V,\min}:\mathbb{R}^{n-m}\rightarrow\mathbb{R}$
by $h_{\bar{z},V,\min}(w)=\frac{1}{2}(\bar{z}+Vw)^{T}(A-\delta I)(\bar{z}+Vw)$.
The Hessian of $h_{\bar{z},V,\min}$ is $V^{T}(A-\delta I)V$, which
tells us that $h_{\bar{z},V,\min}$ is strictly convex. Therefore,
we seek to find the minimizer of $h_{\bar{z},V,\min}$.

The minimizing value of $w$, which we denote as $\bar{w}_{\min}$,
satisfies $\nabla h_{\bar{z},V,\min}(\bar{w}_{\min})=\mathbf{0}$.
This gives us \begin{eqnarray*}
V^{T}(A-\delta I)\bar{z}+V^{T}(A-\delta I)V\bar{w}_{\min} & = & \mathbf{0}\\
\Rightarrow\bar{w}_{\min} & = & -[V^{T}(A-\delta I)V]^{-1}V^{T}(A-\delta I)\bar{z}.\end{eqnarray*}
An easy bound on $\left|\bar{w}_{\min}\right|$ is $\left|\bar{w}_{\min}\right|\leq|[V^{T}(A-\delta I)V]^{-1}||V^{T}||A-\delta I||\bar{z}|$.
 So $h_{\bar{z},V,\min}$ is bounded from below by \begin{eqnarray}
\min_{w}h_{\bar{z},V,\min}(w) & = & \min_{w}\frac{1}{2}(\bar{z}+Vw)^{T}(A-\delta I)(\bar{z}+Vw)\nonumber \\
 & = & \frac{1}{2}(\bar{z}+V\bar{w}_{\min})^{T}(A-\delta I)(\bar{z}+V\bar{w}_{\min})\nonumber \\
 & \geq & -\frac{1}{2}|\bar{z}+V\bar{w}_{\min}||A-\delta I||\bar{z}+V\bar{w}_{\min}|\nonumber \\
 & = & -\frac{1}{2}|A-\delta I||\bar{z}+V\bar{w}_{\min}|^{2}\nonumber \\
 & \geq & -\frac{1}{2}|A-\delta I|[|\bar{z}|+\left|V\bar{w}_{\min}\right|]^{2}\nonumber \\
 & = & -\frac{1}{2}|A-\delta I|[|\bar{z}|+|\bar{w}_{\min}|]^{2}\nonumber \\
 & \geq & -\frac{1}{2}|A-\delta I|\left(|\bar{z}|+\left|[V^{T}(A-\delta I)V]^{-1}\right||V^{T}||A-\delta I||\bar{z}|\right)^{2}\nonumber \\
 & = & -\frac{1}{2}|A-\delta I|\left(1+\left|[V^{T}(A-\delta I)V]^{-1}\right||V^{T}||A-\delta I|\right)^{2}|\bar{z}|^{2}.\label{eq:bdd-below-h-subspace}\end{eqnarray}

\end{proof}
We shall prove Lemma \ref{lem:on-S-prime} about the approximation
of the eigenvectors corresponding to the smallest eigenvalues. This
lemma analyzes Algorithm \ref{alg:find-S-prime}. We clarify our notation.
In the case of an exact quadratic, Algorithm \ref{alg:fast-local-method}
first finds $e_{n-m+1}$, then invokes Algorithm \ref{alg:find-S-prime}
to find the eigenvector $e_{n}$, followed by $e_{n-1}$, $e_{n-2}$
and so on, all the way to $e_{n-m+2}$. 

We define $I_{k}$ and $I_{k}^{\perp}$ as subsets of $\{1,\dots,n\}$
by \begin{eqnarray*}
I_{k} & := & \{n-m+1\}\cup\{n-k+2,n-k+3,\dots,n\}\\
I_{k}^{\perp} & := & \{1,\dots,n\}\backslash I_{k}.\end{eqnarray*}
Next, we define $E_{k}^{\prime}$ and $E_{k}^{\perp}$. The matrix
$E_{k}^{\prime}\in\mathbb{R}^{n\times k}$ has the $k$ columns $e_{n-m+1}$,
$e_{n}$, $e_{n-1}$, ..., $e_{n-k+2}$, while the matrix $E_{k}^{\perp}\in\mathbb{R}^{n\times(n-k)}$
contains all the other columns in the $n\times n$ identity matrix.
The columns of $E_{k}^{\prime}$ and $E_{k}^{\perp}$ are chosen from
the $n\times n$ identity matrix from the index sets $I_{k}$ and
$I_{k}^{\perp}$ respectively.

We will need to analyze the eigenvalues of $(V_{k}^{\perp})^{T}(A\pm\delta I)V_{k}^{\perp}$
in the proof of Lemma \ref{lem:on-S-prime}, where $|V_{k}^{\perp}-E_{k}^{\perp}|$
is small. Note that the matrix $(E_{k}^{\perp})^{T}(A\pm\delta I)E_{k}^{\perp}$
is principle minor of $A\pm\delta I$, and its eigenvalues are the
eigenvalues of $A\pm\delta I$ chosen according to the index set $I_{k}^{\perp}$.
Furthermore, \begin{eqnarray*}
 &  & |(V_{k}^{\perp})^{T}(A\pm\delta I)V_{k}^{\perp}-(E_{k}^{\perp})^{T}(A\pm\delta I)E_{k}^{\perp}|\\
 & \leq & |(V_{k}^{\perp})^{T}(A\pm\delta I)V_{k}^{\perp}-(V_{k}^{\perp})^{T}(A\pm\delta I)E_{k}^{\perp}|\\
 &  & \quad+|(V_{k}^{\perp})^{T}(A\pm\delta I)E_{k}^{\perp}-(E_{k}^{\perp})^{T}(A\pm\delta I)E_{k}^{\perp}|\\
 & \leq & |(V_{k}^{\perp})^{T}(A\pm\delta I)||V_{k}^{\perp}-E_{k}^{\perp}|\\
 &  & \quad+|(V_{k}^{\perp})^{T}-(E_{k}^{\perp})^{T}||(A\pm\delta I)E_{k}^{\perp}|\end{eqnarray*}
It is clear that as $|V_{k}^{\perp}-E_{k}^{\perp}|\to0$, $|(V_{k}^{\perp})^{T}(A\pm\delta I)V_{k}^{\perp}-(E_{k}^{\perp})^{T}(A\pm\delta I)E_{k}^{\perp}|\to0$.
The eigenvalues of a matrix varies continuously with respect to the
entries when the eigenvalues are distinct, so we shall let $\hat{a}_{i}$
denote the eigenvalue of $(V_{k}^{\perp})^{T}(A+\delta I)V_{k}^{\perp}$
that is closest to $a_{i}$, and $\tilde{a}_{i}$ denote the eigenvalue
of $(V_{k}^{\perp})^{T}(A-\delta I)V_{k}^{\perp}$ that is closest
to $a_{i}$. 
\begin{lem}
(Estimates of eigenvectors to negative eigenvalues) \label{lem:on-S-prime}Let
$h:\mathbb{R}^{n}\to\mathbb{R}$. Given a fixed $l<0$ sufficiently
close to $0$, let $p$ be the closest point to $\bar{z}$ in the
set $\lev_{\leq l}h\cap S_{\bar{z},V_{k}^{\perp}}$, where $S_{\bar{z},V_{k}^{\perp}}:=\{\bar{z}+V_{k}^{\perp}w\mid w\in\mathbb{R}^{n-k}\}$
and $V_{k}^{\perp}\in\mathbb{R}^{n\times(n-k)}$. Then for all $\epsilon>0$,
there exists $\delta>0$ such that if 
\begin{enumerate}
\item $h:\mathbb{R}^{n}\rightarrow\mathbb{R}$ satisfies Assumption \ref{ass:condns-on-model},
\item $|\bar{z}|<\delta$ and
\item $|V_{k}^{\perp}-E_{k}^{\perp}|<\delta$, where $V_{k}^{\perp}$ has
orthogonal columns,
\end{enumerate}
then $|\unitt(p-\bar{z})-e_{n-k+1}|<\epsilon$. As a consequence,
$|V_{m}^{\perp}-E_{m}^{\perp}|\to0$ as $\delta\to0$.\end{lem}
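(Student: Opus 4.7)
The plan is to recast the closest-point problem in the affine space as a constrained minimization in coordinates, and to sandwich it between two quadratic model problems using Assumption~\ref{ass:condns-on-model}. Since $V_k^\perp$ has orthonormal columns, any $p\in S_{\bar{z},V_k^\perp}$ can be written as $p=\bar{z}+V_k^\perp w$ with $|p-\bar{z}|=|w|$, so the problem becomes
\[
\min\bigl\{|w|:w\in\mathbb{R}^{n-k},\ h(\bar{z}+V_k^\perp w)\le l\bigr\}.
\]
Using $\lev_{\le l}h_{\max}\cap\mathbb{B}\subset\lev_{\le l}h\cap\mathbb{B}\subset\lev_{\le l}h_{\min}\cap\mathbb{B}$, the optimal value and the optimal direction for $h$ are sandwiched between the analogous values for the two quadratics $h_\pm(x)=\tfrac12 x^T(A\pm\delta I)x$, which give the explicit quadratic inequality $\tfrac12 w^TM^\pm w+(c^\pm)^Tw+\tfrac12\bar{z}^T(A\pm\delta I)\bar{z}\le l$, where $M^\pm:=V_k^{\perp T}(A\pm\delta I)V_k^\perp$ and $c^\pm:=V_k^{\perp T}(A\pm\delta I)\bar{z}$.

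Next I would identify the target eigenvector. Because $I_k^\perp=\{1,\dots,n-m\}\cup\{n-m+2,\dots,n-k+1\}$, the diagonal matrix $(E_k^\perp)^T A\, E_k^\perp$ has diagonal entries $\{a_j:j\in I_k^\perp\}$, so its most negative eigenvalue is $a_{n-k+1}$ (the $a_j$ are strictly decreasing and the other indices in $I_k^\perp$ contribute only values $\ge a_{n-k+2}$ or positive ones). Since the eigenvalues of $A\pm\delta I$ remain distinct for $\delta$ small, eigenvalues and eigenvectors depend continuously on the entries, so as $\delta\to 0$ and $|V_k^\perp-E_k^\perp|\to 0$ the most-negative eigenvalue of $M^\pm$ converges to $a_{n-k+1}$ and its unit eigenvector in $\mathbb{R}^{n-k}$ converges to the standard basis vector whose position in $E_k^\perp$ carries the column $e_{n-k+1}$; applying $V_k^\perp$ then produces a vector close to $e_{n-k+1}\in\mathbb{R}^n$.

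For the min-norm quadratic problem $\min\{|w|:\tfrac12 w^TM w+b^Tw+c_0\le l\}$ with $M$ indefinite, a Lagrangian argument (or diagonalization of $M$ followed by completion of the square, as in the proof of Lemma~\ref{lem:quadratic-min-max}) shows that, when $|b|$ and $|c_0|$ are negligible compared to $\sqrt{|l|}$, the global minimizer lies in the direction of the eigenvector of the most negative eigenvalue of $M$, with $|w|^2$ close to $2l/\lambda_{\min}(M)$. Here $|b^\pm|\le\|V_k^\perp\|\,\|A\pm\delta I\|\,|\bar{z}|=O(|\bar{z}|)=O(\delta)$ and $|c_0|=O(\delta^2)$, while $|l|$ is fixed; so by first fixing $l$ and then choosing $\delta$ small enough, this perturbation is dominated. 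Applying the argument to $M^+$ and $M^-$ and letting $\delta\to 0$ pinches the optimal direction for $h$ itself to the common limit $e_{n-k+1}$, yielding $|\unitt(p-\bar{z})-e_{n-k+1}|<\epsilon$. The main obstacle is precisely this non-convex min-norm step: one must certify that the global (not merely local) optimum is attained along the most-negative-eigenvalue direction and that the linear/constant perturbations produced by $\bar{z}\ne\mathbf{0}$ do not tilt this direction, which is why the lemma fixes $l$ first and only then shrinks $\delta$.

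The consequence $|V_m^\perp-E_m^\perp|\to 0$ follows by induction on $k$ in Algorithm~\ref{alg:find-S-prime}. The base case $k=1$ uses Lemma~\ref{lem:min-maximizer-ppties}: $X_1=\mathrm{span}\{x_i-y_i\}$ is spanned by a unit vector converging to $e_{n-m+1}$, so $V_1$ is close to $E_1$ and Lemma~\ref{lem:first-matrix-approx} completes $V_1$ to an orthogonal matrix whose remaining columns $V_1^\perp$ satisfy $|V_1^\perp-E_1^\perp|\to 0$. For the inductive step, having $|V_k^\perp-E_k^\perp|$ small, the present lemma provides that the new direction $\unitt(\bar{p}_{k+1}-z_i)$ appended to $X_k$ is close to $e_{n-k+1}$, so $V_{k+1}$ is close to $E_{k+1}$ and another application of Lemma~\ref{lem:first-matrix-approx} gives orthonormal columns $V_{k+1}^\perp$ close to $E_{k+1}^\perp$. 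Iterating up to $k=m$ finishes the claim.
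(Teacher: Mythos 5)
Your broad strategy --- sandwich $\lev_{\leq l}h$ between the quadratic model sets $\lev_{\leq l}h_{\max}$ and $\lev_{\leq l}h_{\min}$, identify the target eigenvector of $(V_k^\perp)^T A V_k^\perp$ by continuity of spectral data, and then run an induction up $k$ using Lemma \ref{lem:first-matrix-approx} --- matches the paper's outline. The second paragraph (identifying $a_{n-k+1}$ as the most negative eigenvalue seen by $E_k^\perp$) and the final inductive paragraph are both sound and essentially what the paper does.

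The gap is in the "pinching" step. You correctly observe that the optimal \emph{value} $|p-\bar z|$ is sandwiched between $d(\bar z,\lev_{\leq l}h_{\max}\cap S)$ and $d(\bar z,\lev_{\leq l}h_{\min}\cap S)$. But the optimal \emph{direction} is not sandwiched in any direct sense; $p$ is the closest point for $h$, not for either quadratic model, and the three minimizers are a priori three different points. Saying the direction is "pinched to the common limit" asserts exactly the stability statement that needs proof. The paper closes this by a two-step argument: first it derives the explicit upper bound $\beta$ on $|p-\bar z|$ from the $h_{\max}$ model, then it observes $p$ simultaneously lies in $\mathbb{B}(\bar z,\beta)$ and in $\lev_{\leq l}h_{\min}\cap S_{\bar z,V_k^\perp}$, and it characterizes \emph{all} points in that intersection by diagonalizing $(V_k^\perp)^T(A-\delta I)V_k^\perp$ and solving a linear fractional program (transformed into a linear program) in the squared coordinates $\tilde p_i^2$. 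That LP shows the normalized coordinate $q_{n-k+1}=\tilde p_{n-k+1}^2/\sum\tilde p_i^2$ is forced to $1$ as $\delta\to0$, which is precisely the directional estimate. Your proposal never engages with this intersection set, so it does not actually establish $|\unitt(p-\bar z)-e_{n-k+1}|<\epsilon$.

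A related looseness: for the perturbed min-norm problem $\min\{|w|:\tfrac12 w^TMw+b^Tw+c_0\le l\}$ your completion of the square produces a problem of minimizing $|w'-q|$ with $q=-M^{-1}b\ne 0$, not $|w'|$; the claim that the global minimizer still points along the most-negative eigenvector is plausible when $|q|\ll\sqrt{|l|}$, but it needs a quantitative argument about the shape of the hyperbolic feasible set near its two nearest boundary points. Flagging "the main obstacle" is not the same as overcoming it --- this is exactly what the paper's ball-intersection plus LP machinery supplies. If you wanted to repair your route, you would prove a stability lemma of the form: if $q$ lies in $\lev_{\leq l}h_{\min}\cap S$ with $|q-\bar z|\le\beta$ and $\beta$ is within $\eta$ of the true minimum distance for $h_{\min}$, then $\unitt(q-\bar z)$ is within some explicit function of $(\eta,\delta)$ of $\pm e_{n-k+1}$; that lemma is what the paper's LP argument delivers.
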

\begin{proof}
The first step is to find an upper bound on the distance between $\bar{z}$
and $p$. The upper bound is obtained from looking at the closest
distance between $\bar{z}$ and $\lev_{\leq l}h_{\max}\cap S_{\bar{z},V_{k}^{\perp}}$.

We look at the function $h_{\bar{z},V_{k}^{\perp},\max}:\mathbb{R}^{n-k}\rightarrow\mathbb{R}$
defined by $h_{\bar{z},V_{k}^{\perp},\max}(w):=h_{\max}(\bar{z}+V_{k}^{\perp}w)$.
We have\begin{eqnarray*}
h_{\bar{z},V_{k}^{\perp},\max}(w) & = & h_{\bar{z},V_{k}^{\perp},\max}(\bar{z}+V_{k}^{\perp}w)\\
 & = & \frac{1}{2}(\bar{z}+V_{k}^{\perp}w)^{T}(A+\delta I)(\bar{z}+V_{k}^{\perp}w)\\
\Rightarrow\nabla h_{\bar{z},V_{k}^{\perp},\max}(w) & = & (V_{k}^{\perp})^{T}(A+\delta I)\bar{z}+(V_{k}^{\perp})^{T}(A+\delta I)V_{k}^{\perp}w.\end{eqnarray*}
The critical point of $h_{\bar{z},V_{k}^{\perp},\max}$ is thus $\bar{w}_{\max}:=-[(V_{k}^{\perp})^{T}(A+\delta I)V_{k}^{\perp}]^{-1}(V_{k}^{\perp})^{T}(A+\delta I)\bar{z}$.
The critical value corresponding to this is $h_{\bar{z},V_{k}^{\perp},\max}(\bar{w}_{\max})$.
An upper bound for this critical value is $\frac{1}{2}|\bar{z}+V_{k}^{\perp}\bar{w}|\left|A+\delta I\right||\bar{z}+V_{k}^{\perp}\bar{w}|$,
which is in turn bounded by:\begin{eqnarray*}
 &  & \frac{1}{2}|\bar{z}+V_{k}^{\perp}\bar{w}||A+\delta I||\bar{z}+V_{k}^{\perp}\bar{w}|\\
 & = & \frac{1}{2}\left|A+\delta I\right||\bar{z}+V_{k}^{\perp}\bar{w}|^{2}\\
 & \leq & \frac{1}{2}\left|A+\delta I\right|\left(1+\left|[(V_{k}^{\perp})^{T}(A+\delta I)V_{k}^{\perp}]^{-1}\right||(V_{k}^{\perp})^{T}|\left|A+\delta I\right|\right)^{2}|\bar{z}|^{2}.\\
 &  & \mbox{(following calculations similar to that of \eqref{eq:bdd-below-h-subspace}).}\end{eqnarray*}
Then an upper bound of the distance $d(\bar{z},\lev_{\leq l}h_{\max}\cap S_{\bar{z},V_{k}^{\perp}})$
can be calculated by:

\begin{eqnarray*}
 &  & d(\bar{z},S_{\bar{z},V_{k}^{\perp}}\cap\lev_{\leq l}h_{\max})\\
 & \leq & \left|\bar{z}-\left(\bar{z}+V_{k}^{\perp}\bar{w}_{\max}\right)\right|+d(\bar{z}+V_{k}^{\perp}\bar{w}_{\max},S_{\bar{z},V_{k}^{\perp}}\cap\lev_{\leq l}h_{\max})\\
 & \leq & \underbrace{|\bar{w}_{\max}|+\sqrt{\frac{l-\frac{1}{2}\left|A+\delta I\right|\left(1+\left|[(V_{k}^{\perp})^{T}(A+\delta I)V_{k}^{\perp}]^{-1}\right||(V_{k}^{\perp})^{T}|\left|A+\delta I\right|\right)^{2}|\bar{z}|^{2}}{\hat{a}_{n-k+1}+\delta}}.}_{\beta}\end{eqnarray*}

The extra term $-\frac{1}{2}\left|A+\delta I\right|\cdots|\bar{z}|^{2}$
compensates for the fact that the critical value of $h_{\bar{z},V_{k}^{\perp},\max}$
is not necessarily zero. To simplify notation, let $\beta$ be the
right hand side of the above formula as marked.

We now figure the possible intersection between $\mathbb{B}(\bar{z},\beta)$
and $S_{\bar{z},V_{k}^{\perp}}\cap\lev_{\leq l}h$. Again, since $S_{\bar{z},V_{k}^{\perp}}\cap\lev_{\leq l}h\subset S_{\bar{z},V_{k}^{\perp}}\cap\lev_{\leq l}h_{\min}$,
we look at the intersection of $\mathbb{B}(\bar{z},\beta)$ and $S_{\bar{z},V_{k}^{\perp}}\cap\lev_{\leq l}h_{\min}$.
We find the critical point of $h_{\bar{z},V_{k}^{\perp},\min}:\mathbb{R}^{n-k}\rightarrow\mathbb{R}$
defined by $h_{\bar{z},V_{k}^{\perp},\min}(w):=\frac{1}{2}(\bar{z}+V_{k}^{\perp}w)^{T}(A-\delta I)(\bar{z}+V_{k}^{\perp}w)$.
The gradient of $h_{\bar{z},V_{k}^{\perp},\min}$ can be found to
be \[
\nabla h_{\bar{z},V_{k}^{\perp},\min}(w)=(V_{k}^{\perp})^{T}(A-\delta I)\bar{z}+(V_{k}^{\perp})^{T}(A-\delta I)V_{k}^{\perp}w.\]
Once again, the critical point is $\bar{w}_{\min}=[(V_{k}^{\perp})^{T}(A-\delta I)V_{k}^{\perp}]^{-1}(V_{k}^{\perp})^{T}(A-\delta I)\bar{z}$.
So $\mathbb{B}(\bar{z},\beta)\subset\mathbb{B}(\bar{z}+V_{k}^{\perp}\bar{w}_{\min},\beta+|\bar{w}_{\min}|)$.

Consider $p\in\mathbb{B}(\bar{z}+V_{k}^{\perp}\bar{w}_{\min},\beta+|\bar{w}_{\min}|)\cap(S_{\bar{z},V_{k}^{\perp}}\cap\lev_{\leq l}h_{\min})$.
Let us introduce a change of coordinates such that $p=\sum_{i\in I_{k}^{\perp}}\tilde{p}_{i}\tilde{v}_{i}+\bar{w}_{\min}$,
where $\tilde{v}_{i}\in\mathbb{R}^{n-k}$ correspond to the eigenvectors
of $(V_{k}^{\perp})^{T}(A-\delta I)V_{k}^{\perp}$ (in turn corresponding
to the eigenvalues $\tilde{a}_{i}$) and $\tilde{p}_{i}\in\mathbb{R}$
are the multipliers. Then the condition $\bar{z}+V_{k}^{\perp}p\in\mathbb{B}(\bar{z}+V_{k}^{\perp}\bar{w}_{\min},\beta+|\bar{w}_{\min}|)$
and $\bar{z}+V_{k}^{\perp}p\in S_{\bar{z},V_{k}^{\perp}}\cap\lev_{\leq l}h_{\min}$
can be represented as the following constraints respectively:\begin{eqnarray}
\sum_{i\in I_{k}^{\perp}}\tilde{p}_{i}^{2} & \leq & (\beta+|\bar{w}_{\min}|)^{2},\nonumber \\
\sum_{i\in I_{k}^{\perp}}\tilde{a}_{i}\tilde{p}_{i}^{2} & \leq & l+\frac{1}{2}(\bar{z}+V_{k}^{\perp}\bar{w}_{\min})^{T}(A-\delta I)(\bar{z}+V_{k}^{\perp}\bar{w}_{\min}).\label{eq:linear-constraints}\end{eqnarray}
As $\delta\rightarrow0$, the only admissible solution is $\tilde{p}_{n-k+1}=\sqrt{\frac{l}{\tilde{a}_{n-k+1}}}$
and the rest of the $\tilde{p}_{i}$'s are zero. The above constraints
are linear in $\tilde{p}_{i}^{2}$. We consider the minimum possible
value of $\frac{\tilde{p}_{n-k+1}}{\sqrt{\sum\tilde{p}_{i}^{2}}}=\sqrt{\frac{\tilde{p}_{n-k+1}^{2}}{\sum\tilde{p}_{i}^{2}}}$,
which is the dot product between the unit vectors in the direction
of $\tilde{v}_{n-k+1}$ and $p-(\bar{z}+V_{k}^{\perp}\bar{w}_{\min})$.
This is equivalent to the linear fractional program in $\tilde{p}_{i}^{2}$
of minimizing $\frac{\tilde{p}_{n-k+1}^{2}}{\sum\tilde{p}_{i}^{2}}$
subject to the constraints in \eqref{eq:linear-constraints}.

This linear fractional program can be transformed into a linear program
by $q=\frac{1}{\sum\tilde{p}_{i}^{2}}$ and $q_{i}=\frac{\tilde{p}_{i}^{2}}{\sum\tilde{p}_{i}^{2}}$,
which gives:\begin{eqnarray}
\min & q_{n-k+1}\nonumber \\
\mbox{s.t.} & q & \geq\frac{1}{(\beta+|\bar{w}_{\min}|)^{2}},\label{eq:linear-constraint-2.1}\\
 & \sum_{i\in I_{k}^{\perp}}\tilde{a}_{i}q_{i} & \leq\Big[l+\frac{1}{2}(\bar{z}+V_{k}^{\perp}\bar{w}_{\min})^{T}(A-\delta I)(\bar{z}+V_{k}^{\perp}\bar{w}_{\min})\Big]q,\label{eq:linear-constraint-2.2}\\
 & \sum_{i\in I_{k}^{\perp}}q_{i} & =1,\label{eq:linear-constraint-2.3}\\
 & q_{i} & \geq0\mbox{ for all }i\in I_{k}^{\perp}.\nonumber \end{eqnarray}
The constraints of the linear program above gives \begin{eqnarray*}
\sum_{i\in I_{k}^{\perp}}-\tilde{a}_{i}q_{i}+\tilde{a}_{n-k}\sum_{i\in I_{k}^{\perp}}q_{i} & \geq & -\Big[l+\frac{1}{2}(\bar{z}+V_{k}^{\perp}\bar{w}_{\min})^{T}(A-\delta I)(\bar{z}+V_{k}^{\perp}\bar{w}_{\min})\Big]q+\tilde{a}_{n-1}\\
\Rightarrow\sum_{i\in I_{k}^{\perp}}(-\tilde{a}_{i}+\tilde{a}_{n-k})q_{i} & \geq & -\Big[l+\frac{1}{2}(\bar{z}+V_{k}^{\perp}\bar{w}_{\min})^{T}(A-\delta I)(\bar{z}+V_{k}^{\perp}\bar{w}_{\min})\Big]q+\tilde{a}_{n-1}\\
 & \geq & -\frac{l+\frac{1}{2}(\bar{z}+V_{k}^{\perp}\bar{w}_{\min})^{T}(A-\delta I)(\bar{z}+V_{k}^{\perp}\bar{w}_{\min})}{(\beta+|\bar{w}_{\min}|)^{2}}+\tilde{a}_{n-1}.\end{eqnarray*}
Since only $-\tilde{a}_{n-k+1}+\tilde{a}_{n-k}$ is positive and the
other $-\tilde{a}_{i}+\tilde{a}_{n-k}$ are nonpositive, we have\begin{eqnarray*}
 &  & (-\tilde{a}_{n-k+1}+\tilde{a}_{n-k})q_{n-k+1}\\
 & \geq & \sum_{i\in I_{k}^{\perp}}(-\tilde{a}_{i}+\tilde{a}_{n-k})q_{i}\\
 & \geq & -\frac{l+\frac{1}{2}(\bar{z}+V_{k}^{\perp}\bar{w}_{\min})^{T}(A-\delta I)(\bar{z}+V_{k}^{\perp}\bar{w}_{\min})}{(\beta+|\bar{w}_{\min}|)^{2}}+\tilde{a}_{n-k}\end{eqnarray*}
\[
\Rightarrow q_{n-k+1}\geq\frac{1}{-\tilde{a}_{n-k+1}+\tilde{a}_{n-k}}\left(-\frac{l+\frac{1}{2}(\bar{z}+V_{k}^{\perp}\bar{w}_{\min})^{T}(A-\delta I)(\bar{z}+V_{k}^{\perp}\bar{w}_{\min})}{(\beta+|\bar{w}_{\min}|)^{2}}+\tilde{a}_{n-k}\right).\]
The limit of the right hand side goes to $1$ as $\delta\rightarrow0$,
so this means that $\bar{z}-p$ is close to the direction of the eigenvector
corresponding to the eigenvalue $\tilde{a}_{n-k+1}$ in $(V_{k}^{\perp})^{T}AV_{k}^{\perp}$,
which in turn converges to $e_{n-k+1}$. The proof of this lemma is
complete.

The conclusion that $|V_{m}^{\perp}-E_{m}^{\perp}|\to0$ as $\delta\to0$
follows from the first part of this lemma and Lemma \ref{lem:first-matrix-approx}.
\end{proof}
With these lemmas set up, we are now ready to prove the fast local
convergence of Algorithm \ref{alg:fast-local-method} to the critical
point and critical value. We recall that \emph{Q-linear convergence}
of a sequence of positive numbers $\{\alpha_{i}\}_{i=1}^{\infty}$
converging to zero is defined by $\limsup_{i\to\infty}\frac{\alpha_{i+1}}{\alpha_{i}}<1$,
while \emph{Q-superlinear convergence} is defined by $\lim_{i\to\infty}\frac{\alpha_{i+1}}{\alpha_{i}}=0$.
Next, \emph{R-linear convergence} and \emph{R-superlinear convergence}
of a sequence are defined by being bounded by a Q-linearly convergent
sequence and a Q-superlinearly convergent sequence respectively.
\begin{thm}
(Fast convergence of Algorithm \ref{alg:fast-local-method}) \label{thm:Wrap-up}Suppose
that $f:\mathbb{R}^{n}\to\mathbb{R}$ is $\mathcal{C}^{2}$ and $\mathbf{0}$
is a nondegenerate critical point of $f$ of Morse index $m$, and
$h(\mathbf{0})=0$. 

There is some $R>0$ such that if $0<r<R$, then for $U_{i}=\mathbb{B}(\mathbf{0},r)$
and $l_{0}<0$ (depending on $r$) sufficiently close to $0$, Algorithm
\ref{alg:fast-local-method} converges R-superlinearly to the critical
point $\mathbf{0}$ and Q-superlinearly to the critical value $0$
provided that at each iteration, there exists an optimizing triple
$(S_{i},x_{i},y_{i})$ for which $(x_{i},y_{i})$ is the unique pair
of points $S_{i}\cap(\lev_{\geq l_{i}}f)\cap\mathbb{B}(\mathbf{0},r)$
such that $|x_{i}-y_{i}|=\diam(S_{i}\cap(\lev_{\geq l_{i}}f)\cap\mathbb{B}(\mathbf{0},r))$. 

If a sufficiently good approximate for the lineality space of each
$S_{i}$ is available in step 2 of Algorithm \ref{alg:fast-local-method}
instead, then Algorithm \ref{alg:fast-local-method} converges R-linearly
to the critical point and Q-linearly to the critical value.\end{thm}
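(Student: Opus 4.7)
The plan is to realise Algorithm \ref{alg:fast-local-method} as an iteration that fits the model covered by Assumption \ref{ass:condns-on-model}, then chain the three technical lemmas (\ref{lem:min-maximizer-ppties}, \ref{lem:on-S-prime}, \ref{lem:bdd-on-l-(i+1)}) to control the iterates. After an affine change of coordinates we may assume that $\mathbf{0}$ is the critical point with $f(\mathbf{0})=0$ and Hessian $A=\nabla^2 f(\mathbf{0})$ a diagonal matrix with the nonzero entries ordered decreasingly as in Assumption \ref{ass:condns-on-model} (with a perturbation if needed to make them distinct). Because $f$ is $\mathcal{C}^2$, for any prescribed $\delta>0$ there is $\rho(\delta)>0$ such that Assumption \ref{ass:condns-on-model}, rescaled to the ball $\mathbb{B}(\mathbf{0},\rho(\delta))$, holds. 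We choose $R$ so that on $\mathbb{B}(\mathbf{0},R)$ a sufficiently small initial $\delta_0$ applies, and pick $l_0<0$ small enough that the level set $\lev_{\geq l_0}f\cap\mathbb{B}(\mathbf{0},R)$ is strictly contained in that ball.

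At iteration $i$, Lemma \ref{lem:min-maximizer-ppties} applied to the optimizing triple of the outer problem of step 2 produces $x_i,y_i$ whose difference direction $(y_i-x_i)/|y_i-x_i|$ is close to $e_{n-m+1}$ and whose midpoint $z_i=\frac{1}{2}(x_i+y_i)$ satisfies $|z_i|^2\leq\epsilon_i|l_i|$, where $\epsilon_i\to 0$ with the effective $\delta$ on the ball containing $x_i,y_i$. Iterating Lemma \ref{lem:on-S-prime} through the successive sub-steps of Algorithm \ref{alg:find-S-prime} then shows that the matrix $V_m$ whose columns span the lineality space of $S_i$ satisfies $|V_m-E_m'|\to 0$, so $S_i$ closely approximates the eigenspace of the $m$ most negative eigenvalues of $A$.

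With $S_i$ this close to the desired eigenspace, the $(n-m)$-dimensional affine space through $z_i$ orthogonal to $S_i$ meets the hypotheses of Lemma \ref{lem:bdd-on-l-(i+1)}. That lemma yields $l_{i+1}\geq -C|z_i|^2$ for a constant $C$ bounded uniformly in $i$ in terms of $A$ and $\delta$. Combining the two estimates, $|l_{i+1}|\leq C|z_i|^2\leq C\epsilon_i|l_i|$, which is Q-superlinear convergence of $l_i$ to $0$ since $\epsilon_i\to 0$. For the iterates themselves, Lemma \ref{lem:min-maximizer-ppties} bounds $|x_i-y_i|$ by a constant multiple of $\sqrt{|l_i|}$, so $|x_i|\leq |z_i|+\frac{1}{2}|x_i-y_i|$ is bounded by a constant multiple of $\sqrt{|l_i|}$ and the sequence $\{|x_i|\}$ is dominated by an R-superlinearly convergent sequence, giving R-superlinear convergence to $\mathbf{0}$. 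The final clause of the theorem is handled identically: if a sufficiently good fixed subspace replaces $S_i$ in step 2, one still invokes Lemmas \ref{lem:min-maximizer-ppties} and \ref{lem:bdd-on-l-(i+1)}, but the factor $\epsilon_i$ is replaced by a fixed $q<1$ depending on the quality of the approximation, and the superlinear rates degrade to linear rates.

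The main obstacle is making the induction self-sustaining: one must verify that the iterates $(x_i,y_i)$ remain in the ball where Assumption \ref{ass:condns-on-model} applies, and that the effective $\delta_i$ governing $\epsilon_i$ genuinely shrinks with $i$. This is achieved by noting that $|x_i|\leq c\sqrt{|l_i|}$ propagates: each $l_{i+1}$ is strictly closer to zero than $l_i$, confining $(x_{i+1},y_{i+1})$ to a smaller ball on which $f$ is better approximated by its Hessian quadratic, so $\delta_{i+1}<\delta_i$. Care is also needed to ensure all constants depend only on $A$ and $R$, not on the iterate, so that the bound $|l_{i+1}|\leq C\epsilon_i|l_i|$ genuinely implies $\lim_i|l_{i+1}|/|l_i|=0$.
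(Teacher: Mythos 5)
Your proposal is correct and follows essentially the same route as the paper: rescale so that Assumption \ref{ass:condns-on-model} applies, chain Lemmas \ref{lem:min-maximizer-ppties}, \ref{lem:on-S-prime} (via \ref{lem:first-matrix-approx}) and \ref{lem:bdd-on-l-(i+1)} to bound $|l_{i+1}|/|l_i|$ by a quantity vanishing with $\delta$, and deduce R-convergence of the iterates from $|z_i|\lesssim\sqrt{|l_i|}$. If anything you are more explicit than the paper about why the effective $\delta_i$ shrinks along the iteration (because the iterates are confined to balls of radius $O(\sqrt{|l_i|})$), which is the correct way to upgrade a per-step bound into genuine Q-superlinear convergence.
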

\begin{proof}
As a reminder, Q-linear convergence of the critical value is defined
to be $\limsup_{i\to\infty}\frac{|l_{i+1}|}{|l_{i}|}<\infty$, and
Q-superlinear convergence of the critical value is defined to be $\lim_{i\to\infty}\frac{|l_{i+1}|}{|l_{i}|}=0$.
From the Q-linear (Q-superlinear) convergence of $l_{i}$, we obtain
the R-linear (R-superlinear) convergence of the critical point by
observing that $\limsup_{i\to\infty}\frac{|z_{i}|}{\sqrt{|l_{i}|}}<\infty$,
and that $\sqrt{|l_{i}|}$ converges Q-linearly (Q-superlinearly). 

Since $f$ is $\mathcal{C}^{2}$, for all $\delta>0$, we can find
$R>0$, such that \[
|f(x)-x^{T}Ax|<\delta|x|^{2}\mbox{ for all }x\in\mathbb{B}(\mathbf{0},R).\]
The function $f_{R}:\mathbb{R}^{n}\to\mathbb{R}$ defined by $f_{R}(x):=\frac{1}{R^{2}}f(Rx)$
satisfies Assumption \ref{ass:condns-on-model} with $A:=\nabla^{2}f_{R}(x)=\nabla^{2}f(x)$. 

We want to show that if $\delta>0$ is sufficiently small, then for
all $l<0$ sufficiently small, a step in Algorithm \ref{alg:fast-local-method}
gives good convergence to the critical value. Given an iterate $l_{i}$,
the next iterate $l_{i+1}$ is \[
\min_{x\in S_{z_{i},V_{m}^{\perp}}\cap\mathbb{B}(\mathbf{0},R)}f(x)=\min_{x\in S_{\frac{1}{R}z_{i},V_{m}^{\perp}}\cap\mathbb{B}}R^{2}f_{R}(x),\]
where $V_{m}^{\perp}$, which approximates the first $n-m$ eigenvectors,
is defined before Lemma \ref{lem:on-S-prime}. 

We seek to find $\frac{|l_{i+1}|}{|l_{i}|}$. The value of $l_{i+1}$
depends on how well the last $m$ eigenvectors are approximated, and
how well the critical point is estimated, which in turn depends on
$\delta$. The ratio $\frac{|l_{i+1}|}{|l_{i}|}$ is bounded from
above by\[
-\frac{1}{2}|A-\delta I|\left(1+\left|[(V_{m}^{\perp})^{T}(A-\delta I)V_{m}^{\perp}]^{-1}\right||(V_{m}^{\perp})^{T}||A-\delta I|\right)^{2}|z_{i}|^{2}/l_{i},\]
which converges to $0$ as $\delta\to0$ by Lemmas \ref{lem:min-maximizer-ppties},
\ref{lem:first-matrix-approx}, \ref{lem:bdd-on-l-(i+1)} and \ref{lem:on-S-prime}. 

The conclusion in the second part of the theorem follows a similar
analysis.
\end{proof}

\section{Conclusion and conjectures}

In this paper, we present a strategy to find saddle points of general
Morse index, extending the algorithms for finding critical points
of mountain pass type as was done in \cite{LP08}. Algorithms \ref{alg:fast-local-method}
and \ref{alg:find-S-prime} may not be easily implementable, especially
when $m$ is large. However, Algorithm \ref{alg:find-S-prime} can
be performed only as needed in a practical implementation. It is hoped
that this strategy can augment current methods for finding saddle
points, and can serve as a foundation for further research on effective
methods of finding saddle points.

Here are some conjectures:
\begin{itemize}
\item How do the algorithms presented fare in real problems? Are there difficulties
in the infinite dimensional case when implementing Algorithm \ref{alg:fast-local-method}?
\item Are there ways to integrate Algorithms \ref{alg:fast-local-method}
and \ref{alg:find-S-prime} to give a better algorithm?
\item Are there better algorithms than Algorithm \ref{alg:find-S-prime}
to approximate $S_{i}$?
\item Can the uniqueness assumption in Theorem \ref{thm:Wrap-up} be lifted?
If not, how does it affect the design of algorithms?\end{itemize}

\begin{acknowledgement*}
I thank the Fields Institute in Toronto, where much of this paper
was written. They have provided a wonderful environment for working
on this paper.\end{acknowledgement*}

\end{document}